 \newtheorem{theorem}{Theorem}[section]
 \newtheorem{proposition}{Proposition}[section]
 \newtheorem{lemma}{Lemma}[section]
 \newtheorem{remark}{Remark}[section]
 \renewcommand{\(}{\left(}
 \renewcommand{\)}{\right)}
 \renewcommand{\[}{\left[}
 \renewcommand{\]}{\right]}
 \newcommand{\eps}{\varepsilon}
 \newcommand{\grad}{\ensuremath{\nabla}}
 \newcommand{\N}{\ensuremath{\mathbb{N}}}
 \newcommand{\R}{\ensuremath{\mathbb{R}}}
 \newcommand{\rn}{\ensuremath{\mathbb{R}^n}}
 \newcommand{\Real}{\mathbb{R}}
 \newcommand{\RPlus}{\Real^{+}}
 \newcommand{\abs}[1]{\left\vert#1\right\vert}
 \newcommand{\norm}[1]{\left\Vert#1\right\Vert}
 \newcommand{\inp}[1]{\langle#1\rangle}
 \newcommand{\be} {\begin{equation}}
 \newcommand{\ee} {\end{equation}}
 \newcommand{\bea} {\begin{eqnarray}}
 \newcommand{\eea} {\end{eqnarray}}
 \newcommand{\Bea} {\begin{eqnarray*}}
 	\newcommand{\Eea} {\end{eqnarray*}}
 \newcommand{\pa} {\partial}
 \newcommand{\al} {\alpha}
 \newcommand{\ba} {\beta}
 \newcommand{\de} {\delta}
 \newcommand{\ga} {\gamma}
 \newcommand{\Om} {\Omega}
 \newcommand{\De} {\Delta}
 \newcommand{\La} {\Lambda}
 \newcommand{\I}{\infty}
 \newcommand{\tf} {\tfrac}
 \newcommand{\f}{\frac}
 \newcommand{\ml}{\mathcal}
 \newcommand{\mb}{\mathbb}
 \newcommand{\ef}{\eqref}
 \newcommand{\nee}{\notag\ee}
 \newcommand{\pOm}{\partial\Om}
 \newcommand{\vst}{\vspace{10pt}}
 \makeatletter \@addtoreset{equation}{section} \makeatother
\begin{document}

 	\title[On a singularly perturbed anisotropic elliptic system ]
 	{Concentration results for solutions of a  singlarly perturbed elliptic system with variable coefficients}

 	\author[A.K. Sahoo]{Alok Kumar Sahoo}
 	
 	\address{Dept. of Mathematics, IIT Hyderabad,\\ India-502285. }
 	
 	\email{aloksahoomath@gmail.com}

 	\author[B.B. Manna]{Bhakti Bhusan Manna}
 	\address{Dept. of Mathematics, IIT Hyderabad, \\India-502285.}
 	\email{bbmanna@iith.ac.in} 
 	\subjclass{Primary 35J47; Secondary 35J57, 35J50}
 	
 	\keywords{Anisotropic problem, elliptic system, Layered solutions, Symmetry}
 	
 	\date{}
 	
 	\begin{abstract}
 		In this article we shall study the following elliptic system with coefficients:
 				\begin{equation}\notag
 		\left\{\begin{aligned}
 		-\eps^2\De u +c(x)u=b(x) \abs{v}^{q-1}v, &\text{ and } -\eps^2\De v +c(x)v=a(x) \abs{u}^{p-1}u &&\text{in } \Om\\
 		u>0, \ v>0 \text{ in } \Om, &\text{ and }\quad\f{\pa u}{\pa\nu} = 0 = \f{\pa v}{\pa\nu} &&\text{on }\pOm
 		\end{aligned}
 		\right.
 		\end{equation} 
 		where $\Om$ is a smooth bounded domain in $\rn, n\ge 3$. The coefficients $a(x), b(x)$ and $c(x)$ are positive bounded smooth functions. We shall study the existence of point concentrating solutions and discuss the role of the coefficients to determine the concentration profile of the solutions. We have also discussed some applications of our main theorem towards the existence of solutions concentrating on higher-dimensional orbits.
 	\end{abstract}
 	
 	\maketitle

 	
 	\section{Introduction}
 	
 	Consider the following singularly perturbed coupled elliptic system:
 	\begin{equation}
 	\label{I1}
 	\left\{\begin{aligned}
 	-\eps^2\De u +c(x)u=b(x) \abs{v}^{q-1}v, &\text{ and } -\eps^2\De v +c(x)v=a(x) \abs{u}^{p-1}u\\
 	u>0, \ v>0 \text{ in } \Om, &\,\,\,\text{ and }\,\,\,\f{\pa u}{\pa\nu} = 0 = \f{\pa v}{\pa\nu} \quad\text{ on }\pOm
 	\end{aligned}
 	\right.
 	\end{equation}
 	where $\Om$ is a smooth bounded domain in $\rn$, $\eps$ is a small positive parameter and $n\ge 3$. The exponents $p,q$ satisfy $p,q> 1$ and
 	\be
 	\f{1}{p+1}+\f{1}{q+1}>\f{n-2}{n}\label{rangpq}
 	\ee
 	i.e $p$ and $q$ lies below the critical hyperbola (for reference look at \cite{MR1177298}). The weights $a,b,$ and $c$ are smooth functions with \be \label{c-asum} K_1\leq a(x),b(x),c(x)\leq K_2 \text{\quad in\quad} \bar{\Om}  \ee for some real constants $K_1,K_2>0$.
 	
 	In this work, we shall study the point concentration behavior for some least energy solution of the above system. We will also describe the role of the coefficients to determine the location of the concentration. 
 	
 	For the scalar case, the equation with nonconstant weights appears in many cases where the authors want to study the existence of higher-dimensional concentrating solutions (see \cite{MR2608946, MR3250368, MR3259003, MR3229826, MR3620895, MR3032301}).  In \cite{MR1974510, MR2056434}, Ambrosetti, Malchiodi, and Ni studied the existence of spherical concentrating solutions.  Later in \cite{MR2608946, MR3250368, MR3259003}, the authors proved the existence of solutions concentrating on higher dimensional ($\mathbb{S}^1, \mathbb{S}^3$, and  $\mathbb{S}^7$) orbits. In these works (\cite{MR2608946, MR3250368, MR3259003}), the orbits of concentration produced by a reduction process (Hopf Fibration), which leads to an anisotropic problem in lower dimensions. Some more results related to anisotropic equations, showing the existence of solutions concentrating in higher dimensional orbits can be found in \cite{MR3192458, MR3620895, MR3032301}. 
 	
 	The singularly perturbed elliptic system, with Neumann boundary condition, was first studied by Avila and Yang in \cite{MR1978382}. They proved the existence of nontrivial positive solutions whose point of maximums approaches to a common point on the boundary as $\eps$ goes to 0. This result was generalized by Ramos and Yang for some general convex nonlinearities in \cite{MR2135746}. Then in \cite{MR2057542}, Pistoia and Ramos proved the concentration happens at the point of maximum of the mean curvature of the boundary. All these results are related to the existence of point concentrating solutions and the profile of the concentration. In \cite{MR3056708}, the authors studied the existence of radial solutions for a Neumann system with radial weights.
 	
 	To the best of our knowledge, this is the first study for the Neumann elliptic system with non-trivial weights. We have considered general weight functions, which help us to find many other higher dimensional layered solutions for different singularly perturbed systems. We shall show the concentration profile mainly depends upon the coefficients $a, b$ and $c$ and, in a special case, on the mean curvature of the boundary too. We define a function $\Lambda$ on the boundary of the domain,(see \ef{concfn} for more details)
 	\be \Lambda(x) =  \(\f{b(x)}{c(x)}\)^{ \f{p+1}{1-pq}}\(\f{a(x)}{c(x)}\)^{ \f{q+1}{1-pq}}   (c(x))^{1-\f{n}{2}} \label{concfn1}\nee
 	The main result in this paper is the following,
 	
 	\begin{theorem}\label{Thm1}
 		Under assumption \eqref{rangpq},\ef{c-asum} $\exists$ an $\eps_0$ such that for $0<\eps<\eps_0$ the equation \eqref{I1} has non constant positive 
 		solutions $u_\eps, \ v_\eps\in C^2(\Om)$. Moreover, both solutions concentrate for $\eps \to 0$ on a common point $P_\eps\in\pOm$, with $P_\eps$ satisfying:
 		\begin{itemize}
 			\item [(i)] $\lim_{\eps\to 0}P_\eps=\inf_{x\in\pOm} \Lambda(x) $ if $\La$ is not constant.
 			\item [(ii)] $\lim_{\eps\to 0}P_\eps=\sup_{x\in\pOm} H(x)\gamma(x)+\eta(x) $ if $\La$ is constant.
 		\end{itemize}
 		Where $H$ is the mean curvature of the boundary and $\gamma(x)=\f{C_1}{\sqrt{c(x)}}$, $\eta(x)=\f{C_2}{\sqrt{c(x)}}\f{\pa}{\pa_{x_n}}(C_3\ln a(x)+C_4\ln b(x)+C_5\ln c(x))$ are functions on $\pOm$. And the constants are given in Proposition \ref{bconst}.
 	\end{theorem}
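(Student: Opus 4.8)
The plan is to realise the solutions of \eqref{I1} as critical points of a well-posed energy functional and to locate the concentration point by a finite-dimensional Lyapunov--Schmidt reduction over the boundary. Since \eqref{I1} is a Hamiltonian system, its primal functional on $H^1(\Om)\times H^1(\Om)$ is strongly indefinite, so I would first pass to the dual (Clarke) formulation: writing $\mathcal K_\eps:=(-\eps^2\De+c)^{-1}$ with Neumann data and setting $f=a|u|^{p-1}u$, $g=b|v|^{q-1}v$, a pair $(u,v)$ solves \eqref{I1} iff $(f,g)$ is a critical point of
\[
J_\eps(f,g)=\int_\Om\Big(\tfrac{p}{p+1}\,a(x)^{-1/p}|f|^{\frac{p+1}{p}}+\tfrac{q}{q+1}\,b(x)^{-1/q}|g|^{\frac{q+1}{q}}\Big)-\int_\Om f\,\mathcal K_\eps g
\]
on $L^{(p+1)/p}(\Om)\times L^{(q+1)/q}(\Om)$, and because $1<\tfrac{p+1}{p},\tfrac{q+1}{q}<2$ this $J_\eps$ is of mountain-pass type, on which a finite-dimensional reduction can be run; the recovered $(u,v)$ lie in $C^2(\Om)$ by elliptic bootstrap.

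First I would identify the limit profile. Rescaling $x=P+\eps y$ about a point $P\in\pOm$ and freezing the coefficients at $P$ yields, after flattening the boundary, the autonomous system $-\De U+c(P)U=b(P)|V|^{q-1}V$, $-\De V+c(P)V=a(P)|U|^{p-1}U$ on the half-space with Neumann data, whose relevant solution is the whole-space ground state reflected evenly across the boundary hyperplane. The substitution $U(y)=\alpha(P)\,\mathcal U(\sqrt{c(P)}\,y)$, $V(y)=\beta(P)\,\mathcal V(\sqrt{c(P)}\,y)$, with $\alpha(P),\beta(P)$ chosen as explicit monomials in $a(P),b(P),c(P)$, normalises the whole-space problem to $-\De\mathcal U+\mathcal U=|\mathcal V|^{q-1}\mathcal V$, $-\De\mathcal V+\mathcal V=|\mathcal U|^{p-1}\mathcal U$ on $\rn$, whose positive radial ground state $(\mathcal U,\mathcal V)$ — unique up to translation and exponentially decaying — I would take from the literature (results of Busca--Sirakov type), together with its nondegeneracy: the kernel of the linearised operator is spanned exactly by $\{\partial_{y_i}\mathcal U,\partial_{y_i}\mathcal V\}_{i=1}^{n}$. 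Tracking how $\alpha,\beta$ and the Jacobian factor $\eps^{n}c(P)^{-n/2}$ enter the energy of this bubble is exactly what produces the exponents $\tfrac{p+1}{1-pq}$, $\tfrac{q+1}{1-pq}$ and $1-\tfrac n2$ in the function $\Lambda$.

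Next, for $P\in\pOm$ I would build an approximate solution $W_{\eps,P}$ by transplanting the rescaled ground state to $P$ in boundary normal coordinates and correcting it so the Neumann condition holds, and seek a genuine solution as $W_{\eps,P}+\phi$ with $\phi$ orthogonal to $T_{\eps,P}:=\mathrm{span}\{\partial_{\tau_i}W_{\eps,P}\}$, spanned by the tangential translations only — the normal one being killed by the boundary condition, with a restoring force pinning the bubble to $\pOm$. Using the nondegeneracy to invert the linearised operator on $T_{\eps,P}^{\perp}$ and the contraction mapping principle, one solves the auxiliary equation for $\phi=\phi_{\eps,P}$ with $\|\phi_{\eps,P}\|=o(\eps)$ uniformly in $P$. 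The reduced energy $\mathcal E_\eps(P):=J_\eps(W_{\eps,P}+\phi_{\eps,P})$ then admits, uniformly in $P\in\pOm$, an expansion
\[
\mathcal E_\eps(P)=\eps^{n}\Big(C_0\,\Lambda(P)+\eps\big(C_1H(P)\gamma(P)+\eta(P)\big)+o(\eps)\Big),
\]
whose $O(\eps)$ correction collects the first-order deviation of $\pOm$ from its tangent hyperplane — the mean curvature $H$, entering multiplied by the effective bubble width $\eps/\sqrt{c(P)}$, whence the factor $1/\sqrt{c}$ in $\gamma$ — and the first-order Taylor expansion of $\ln a,\ln b,\ln c$ at $P$ in the inward normal direction, whence $\eta$; matching these contributions fixes the constants $C_i$ of Proposition \ref{bconst}.

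Finally I would extract the critical points. If $\Lambda$ is non-constant, then for $\eps$ small $\mathcal E_\eps$ has a local minimum near any minimiser of $\Lambda|_{\pOm}$; taking a global minimiser gives a critical point $P_\eps$ converging to a point where $\Lambda|_{\pOm}$ is minimal, and $(u_\eps,v_\eps):=W_{\eps,P_\eps}+\phi_{\eps,P_\eps}$ is the required pair — strictly positive by the maximum principle for cooperative systems ($a,b>0$) and non-constant since its energy is of order $\eps^{n}\to0$ while any constant solution, when one exists, carries energy bounded away from $0$. If $\Lambda$ is constant the $O(\eps^{n})$ term is constant, so one renormalises by $\eps^{n+1}$ and repeats the argument with $H\gamma+\eta$ in place of $\Lambda$; the expansion being arranged so that minimising the energy maximises $H\gamma+\eta$, one gets $P_\eps$ converging to a point where $H\gamma+\eta$ is maximal over $\pOm$. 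This is (i) and (ii). I expect the two genuine obstacles to be: (1) the nondegeneracy of the system ground state and the precise description of its kernel under the even (Neumann) reflection across $\pOm$, on which the whole reduction rests; and (2) the second-order energy expansion when $\Lambda$ is constant, where every $O(\eps^{n})$ contribution cancels and one must compute the $O(\eps^{n+1})$ term exactly — including the contribution of the correction $\phi_{\eps,P}$ and the interaction of the boundary curvature with the gradients of the coefficients — which is already the delicate point in the scalar Neumann problem.
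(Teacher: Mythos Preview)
Your proposal is coherent and would work \emph{if} the nondegeneracy input were available, but it follows a genuinely different route from the paper, and the difference is not cosmetic.

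The paper does not use the dual formulation, does not build approximate solutions, and does not perform a Lyapunov--Schmidt reduction. It works directly with the primal, strongly indefinite functional $I_\eps$ on $H^1(\Om)\times H^1(\Om)$: after a $C^1$ truncation of $g$ (so that both nonlinearities have the same subcritical growth), existence of a least-energy critical point with Morse index $\le 1$ is obtained from the Benci--Rabinowitz linking via the Abbondandolo--Felmer--Molina relative Morse index estimate, and uniform $L^\infty$ bounds (blow-up plus a Liouville theorem from Ramos--Yang) remove the truncation. The location of the spike is then pinned down by an \emph{energy sandwich}: an upper bound for $c_\eps$ is produced by testing against a transplanted limit ground state centered at an arbitrary boundary point (Theorem~4.1), and a matching lower bound is obtained by expanding the energy of the actual least-energy solution after straightening the boundary (Theorem~5.1). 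Both bounds have the form $\eps^n\big(\Lambda(\cdot)I_\infty(U,V)-\eps[(n-1)H\gamma+\eta]+o(\eps)\big)$, and comparing them forces $P_\eps$ to a minimiser of $\Lambda$ (or, when $\Lambda$ is constant, to a maximiser of $H\gamma+\eta$).

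The decisive point is the one you yourself flag as obstacle (1): your reduction requires that the linearisation at the ground state of $-\De\mathcal U+\mathcal U=|\mathcal V|^{q-1}\mathcal V$, $-\De\mathcal V+\mathcal V=|\mathcal U|^{p-1}\mathcal U$ on $\rn$ have kernel exactly the span of translations. For general $(p,q)$ under the critical hyperbola this nondegeneracy (and even uniqueness of the ground state) is \emph{not} available in the literature; it is known essentially only on the diagonal $p=q$, where the problem collapses to Kwong's scalar result. The paper's energy-comparison scheme needs only existence, radial symmetry and exponential decay of \emph{some} ground state of the limit system --- no uniqueness, no nondegeneracy --- which is why it goes through on the full range \eqref{rangpq}. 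So as written your plan has a genuine gap unless you restrict to $p=q$ or supply an independent nondegeneracy proof.

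What each approach buys: your reduction, when it applies, gives a $C^1$ reduced map on $\pOm$ and hence immediate multiplicity results at nondegenerate critical points of $\Lambda$ (or of $H\gamma+\eta$), together with smooth dependence of $P_\eps$ on $\eps$; the paper's method is coarser (it only sandwiches the least energy) but is unconditional in $(p,q)$ and never touches the linearised operator.
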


 	Note that the concentration profile of the solution depends on the weight functions only unless the function $\La$ becomes constant. The following system in $\rn$ plays an important role in finding the asymptotic profile of the solutions. 
 	\be -\De u+u=\abs{v}^{q-1}v  \text{ and } -\De v+v=\abs{u}^{p-1}u, \qquad u, v >0\text{ in } \rn. \label{limeq}\ee

 	The energy functional $I_\eps(u,v): H^1(\Om)\times H^1(\Om) \to \R$ associated to equation \eqref{I1} is given by
 	\be I_\eps(u,v)=\int_\Om[\eps^2\langle\grad u,\grad v\rangle +c(x)uv-a(x) F(u)-b(x) G(v)]dx\label{enI1}\ee
 	with $F(s) = \frac 1{p+1} |s|^{p+1}$ and $G(s) = \frac 1{q+1} |s|^{q+1}$.
 	Lets $\ml{H}:=H^1(\Om)\times H^1(\Om)$ equipped with the norm $\norm{(u,v)}=\norm{u}+\norm{v}$. Note that under the assumption (\ref{rangpq}) one can have $q + 1 >2^* >p+1$ and $J_\eps$ may not be well defined over $\ml{H}$. In \cite{MR1978382} and \cite{MR1785681}  authors proved the existence result using Dual variation method and Fraction setting respectively.  But for a modified problem as in \cite{MR2135746}, we shall see that the solutions are uniformly bounded and hence the integrals are well defined (One can see from\cite{MR2342612} also). So for the moment,  we assume $2<p \le q<\f{N+2}{N-2}$. Then $J_\eps$
 	is well defined and belongs to $C^2(\ml{H},\R)$. Furthermore
 	\be DI_\eps(u,v)(\phi,\psi)=\langle u,\psi\rangle_\eps + \langle \phi,v\rangle_\eps -\int_\Om [a(x) f(u)\phi+b(x)g(v)\psi]dx\label{denI1}\ee
 	where $f(u)=\abs{u}^{p-1}u$, $g(v)=\abs{v}^{q-1}v$ and the quadratic term of the energy $\langle \cdot , \cdot\rangle_\eps$ is defined as 
 	\be \langle u, v\rangle_\eps=\int_\Om[\eps^2\langle\grad u,\grad v\rangle +c(x)uv]dx\label{ipI1}\ee\\
 	is positive definite (negative definite) on $E^+(E^-)$ where $E^\pm:=\{(\phi, \pm\phi):\phi\in H^1(\Om)\}$. Note $\ml{H}=E^+\oplus E^-$.
 	\par \bigskip
 	
 	The paper is organized as follows. In the second section, using of Benci-Rabinowitz's theorem, we have proved the existence of solutions $(u_\eps, v_\eps)$ for $\eps$ small enough. In the third section, we have described some behavior of the solutions and the solutions of the limit problem. In the fourth and fifth section, we have shown the upper and lower energy estimate of the energy functional giving the proof of main theorem. Finally, in section 6 we shall discuss some applications, as mentioned earlier. \medskip 
 	
 	\section{\textbf{ Existence of Mountain pass-type solution and known results}}	
 	
 In this section, our goal is to find solution to the problem $\ef{I1}$ from truncated system.	Without loss of generality, one can assume $q+1\ge p+1>2$, and $p+1<\f{2N}{N-2}$. Then for the sequence $j = 1, 2,3, . . .$ define
 	\be g_j(s) = 
 	\begin{cases}
 		A_j \abs{s}^{p-1}s + B_j&\text{for $ s\ge j $},\notag \\
 		\quad s^q&\text{for $ |s|\le j $},\notag \\
 		\tilde{A}_j\abs{s}^{p-1}s+\tilde{B}_j&\text{for $ s\le -j $}.\notag \\
 	\end{cases}
 	\notag\ee
 	The coefficients are 
 	\begin{align*}
 	A_j=\Big(\f{q}{p}+o(1)\Big)j^{q-p}=\tilde{A}_j \text{ and } B_j=\Big(\f{p-q}{p}+o(1)\Big)j^q=-\tilde{B}_j,
 	\end{align*}

 	chosen in such a way that the functions $g_j$ become $C^1$ (See \cite{MR2135746}). And we consider the modified problem 
 	 	\begin{equation}
 	\label{MI1}
 	\left\{\begin{aligned}
 	-\eps^2\De u +c(x)u=b(x) g_j(v), &\text{ and } -\eps^2\De v +c(x)v=a(x) f(u)&&\text{in } \Om\\
 	u>0, \ v>0 \text{ in } \Om, &\,\,\,\text{ and }\,\,\,\f{\pa u}{\pa\nu} = 0 = \f{\pa v}{\pa\nu} \quad\text{ on }\pOm
 	\end{aligned}
 	\right.
 	\end{equation}
 	The energy functional is
 	\be I_\eps(u,v)=\int_\Om[\eps^2\langle\grad u,\grad v\rangle +c(x)uv-a(x) F(u)-b(x) G_j(v)]dx\label{modeng}\ee
 Hence, the energy functional \ef{modeng} is $C^2$ over the Hilbert space $\ml{H}$.Furthermore, if $(u_{\eps_j},v_{\eps_j})$ solves the system \ef{MI1} and are uniformly bounded ,then $(u_{\eps_j},v_{\eps_j})$ solves the problem \ef{I1} for a large $j$. The proof of uniform boundedness is closely related to the proof given in \cite{MR2135746}. 
 
 We recall the following Liouville-type result from \cite{MR2135746}. Suppose $f_\I$ and $g_\I$ are $C^1$ functions satisfy, for some positive constants $c_1,c_2$ ,$\forall s\in \R$, the fillowing:
 	 \be \begin{aligned}
 	     & \text(a)\,\,\, c_1|s|^{q+1} \leq g_\I(s)s\leq c_2 |s|^{q+1}      \hspace{90mm}\\
 	    &  \text(b)\,\,\, g_\I(s)s\leq (q+1)G_\I(s) \\
 	   & \text(c)\,\,\, p\, g_\I(s)s\leq g^{\prime}_\I(s)s^2  \\
 	  \end{aligned} \nee
 Let $(u,v)$ be $C^2$ solution of $-\De u= g_\I(v), -\De v= f_\I(u)$ in $\R^n$ or with neumann data in  $\R^n_+$(up to rotation and translation). We say $(u,v)$ has finite index if $\exists R_0>0$ with property that for all $\phi\in H^1$ such that support of $\phi$ in $\complement B(0,R_0)$ with $2\int|\grad \phi|^2-f^{\prime}_\I(u)\phi^2-g^{\prime}_\I(v)\phi^2 \geq 0$.
 	\begin{proposition}[proposition 1.4 in \cite{MR2135746}]\label{p21}
 	 Let $g_\I\in C^1(\R)$ and $f_\I=C_0|s|^{p-1}s$, with $C_0>0$ , $p,q$ satisfies critical hyperbola condition and suppose $(u,v)$ has finite index ,  
 	 \be \begin{aligned}.
 	 	& \text(i)\,\,\, \text{if $g_\I$=0, then u=0}      \hspace{90mm}\\
 	  	& \text(ii)\,\,\, \text{if $g_\I$ satisfies $(a),(b),(c)$ then u=0=v}.
 	 \end{aligned} \nee
 	\end{proposition}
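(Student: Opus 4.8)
The plan is to run the standard ``finite Morse index $\Rightarrow$ Liouville'' argument: feed a suitable cut-off test function into the finite-index inequality, combine the outcome with a Pohozaev identity, and use that $(p,q)$ lies below the critical hyperbola to force the integrals $\int|u|^{p+1}$ and $\int|v|^{q+1}$ to vanish. First I would handle the half-space case: if $(u,v)$ solves the system on $\R^n_+$ with $\pa_\nu u=\pa_\nu v=0$ on $\pa\R^n_+$, even reflection across the hyperplane produces a $C^1$ pair that solves the same system weakly on $\rn$, hence is an entire $C^2$ solution by elliptic regularity, and the finite-index property persists after reflecting the test functions evenly and enlarging $R_0$. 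Thus we may assume $(u,v)$ is entire; moreover, in our application it arises as a blow-up limit of the uniformly bounded solutions of the truncated problem \eqref{MI1}, so we may also assume $u,v\in L^\I(\rn)$.

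For part (i), $g_\I\equiv 0$ makes the first equation $\De u=0$, so $u$ is an entire harmonic function; being bounded, $u\equiv c$ for a constant $c$ by Liouville's theorem. Since now $g_\I'\equiv 0$, the finite-index inequality reads $2\int|\grad\phi|^2\ge p\,C_0\int|u|^{p-1}\phi^2=p\,C_0|c|^{p-1}\int\phi^2$ for every $\phi\in H^1$ supported in $\rn\setminus B_{R_0}$; but an exterior domain in $\rn$ admits no Poincar\'e inequality with a strictly positive constant (test with slowly varying bumps pushed to infinity), so $c=0$, i.e.\ $u\equiv 0$. Nothing more is claimed here, so this settles (i).

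For part (ii) I would use the full Hamiltonian (Lane--Emden) structure of $-\De u=g_\I(v),\ -\De v=f_\I(u)$ together with (a)--(c). Testing the two equations against $v\psi^2$ and $u\psi^2$ (with $\psi$ a suitable cut-off) and integrating by parts gives the balance $\int g_\I(v)v\,\psi^2=\int f_\I(u)u\,\psi^2+O\!\big(\int(u^2+v^2)|\grad\psi|^2\big)$. The test function in the finite-index inequality is then chosen so that condition (c), i.e.\ $p\,g_\I(s)s\le g_\I'(s)s^2$ (together with the identity $f_\I'(u)u^2=p f_\I(u)u$), turns $\int(f_\I'(u)+g_\I'(v))\phi^2$ into a lower bound for a positive multiple of $\int(|u|^{p+1}+|v|^{q+1})\psi^2$ via (a); combined with the balance identity and a cut-off iteration, this first yields the finiteness and the decay at infinity of $\int|u|^{p+1}$ and $\int|v|^{q+1}$. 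With that decay the boundary terms at $|x|=R$ vanish along a sequence $R\to\I$ in the Pohozaev identity obtained by testing the system against $(x\cdot\grad u,\ x\cdot\grad v)$, and --- using $\int F_\I(u)=\tfrac1{p+1}\int f_\I(u)u$ together with (b) --- that identity reduces to
\be
\Big(\tfrac{n}{p+1}+\tfrac{n}{q+1}-(n-2)\Big)\int_{\rn}\grad u\cdot\grad v\ \le\ 0 .
\nee
The bracketed factor equals $n\big(\tfrac{1}{p+1}+\tfrac{1}{q+1}-\tfrac{n-2}{n}\big)$, which is strictly positive by \eqref{rangpq}, while $\int_{\rn}\grad u\cdot\grad v=\int g_\I(v)v\ge c_1\int|v|^{q+1}\ge 0$ by (a); hence $\int_{\rn}\grad u\cdot\grad v=0$, so $v\equiv 0$ and then, from $-\De v=f_\I(u)$, also $u\equiv 0$.

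The main obstacle is the step in part (ii) that upgrades ``stable outside a ball'' to quantitative integrability and decay of \emph{both} components: the system is non-cooperative (Hamiltonian, hence not order-preserving), so no maximum-principle comparison is available, and $u$ and $v$ carry different homogeneities, so the cut-off test function fed into the finite-index inequality must be balanced so that the exponents generated by (a)--(c) close up exactly at the critical hyperbola \eqref{rangpq}. Everything else --- the reflection, the Pohozaev computation, and the limit passage --- is routine once that estimate is in hand. For the complete argument we refer to \cite{MR2135746}.
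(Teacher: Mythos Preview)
The paper does not contain a proof of this proposition at all: it is merely quoted from Ramos and Yang \cite{MR2135746} (introduced in the paper with ``We recall the following Liouville-type result from \cite{MR2135746}''), so there is nothing in the paper to compare your attempt with. Your outline is in fact a faithful sketch of the argument in \cite{MR2135746}: even reflection to reduce the half-space Neumann case to an entire solution, use of the finite-index (stability outside a ball) inequality with cut-offs to obtain the integrability and decay of $\int|u|^{p+1}$ and $\int|v|^{q+1}$, and then the Pohozaev identity for the Hamiltonian system, which under the subcritical hyperbola condition \eqref{rangpq} collapses to $u=v=0$. Your identification of the genuine difficulty --- closing the cut-off estimate with the two different homogeneities so that the exponents match exactly at the critical hyperbola --- is correct, and you are right to defer to \cite{MR2135746} for that computation.

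One minor point on part~(i): you invoke $u\in L^\infty$ (borrowed from the application in Proposition~\ref{unibond}) to apply the classical Liouville theorem for bounded harmonic functions. The statement of the proposition does not assume boundedness, and in \cite{MR2135746} this step is handled intrinsically: the finite-index inequality $2\int|\nabla\phi|^2\ge C_0 p\int|u|^{p-1}\phi^2$ for $\phi$ supported in $\complement B_{R_0}$ already forces $|u|^{p-1}$ to have suitable integrability at infinity, and an entire harmonic function with such decay of $|u|^{p-1}$ must be constant (and then your scaling argument gives $u\equiv 0$). For the purposes of this paper your shortcut is harmless, since the only use of Proposition~\ref{p21} is precisely in the blow-up argument of Proposition~\ref{unibond}, where $\|u\|_\infty,\|v\|_\infty\le 1$ by construction.
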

 
And we have 
 \begin{proposition}\label{unibond}
 	For any given sequence $\eps_j$, let $u_j,v_j$ be solutions to the problem \ef{MI1} with $\eps=\eps_j$. if there exists $k\in \N$ such that $m(u_j,v_j)\leq k$ for every $j$, then $\exists \  K>0$ such that \be \norm{u_j}_\I+ \norm{v_j}_\I\leq K\qquad \forall j.\nee 
 \end{proposition}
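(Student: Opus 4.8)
The plan is to prove Proposition~\ref{unibond} by contradiction together with a blow-up (rescaling) analysis, following closely the scheme of \cite{MR2135746} and taking care only of the features special to the present variable-coefficient setting. Suppose the statement fails: then, along a subsequence that I will not relabel, $\norm{u_j}_\I+\norm{v_j}_\I\to\I$. After interchanging the two equations (and the exponents $p$ and $q$) if necessary, I may assume $M_j:=\norm{u_j}_\I=u_j(x_j)\to\I$ for some $x_j\in\bar\Om$. Because \ef{MI1} is of Hamiltonian type and, after the truncation, the nonlinearities have homogeneity $p$ in the $v$--equation and $p$--growth at infinity in the $u$--equation, the rescaling must be anisotropic. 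I would set
\be
\hat u_j(y)=M_j^{-1}u_j(x_j+\rho_j y),\qquad \hat v_j(y)=\mu_j^{-1}v_j(x_j+\rho_j y),\nee
where $\mu_j$ is a suitable power of $M_j$ and the length scale $\rho_j$ equals $\eps_j$ times a negative power of $M_j$, these two choices being forced by the requirement that, after multiplying through, both rescaled equations carry order-one coefficients in front of the nonlinear terms. Since $pq>1$, this gives $\rho_j\to0$ and, most importantly, $\rho_j/\eps_j\to0$; hence the zeroth-order terms become $\rho_j^{2}\eps_j^{-2}\,c(x_j+\rho_j\,\cdot)\to0$, the singular parameter $\eps$ is washed out, and the rescaled domains $\rho_j^{-1}(\Om-x_j)$ exhaust all of $\R^n$, or --- if $x_j$ stays within distance $O(\rho_j)$ of $\pOm$ --- a half-space $\R^n_+$ after a rotation, since $\pOm$ is smooth and therefore flattens at this scale. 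The coefficients $a,b,c$ converge uniformly on compact sets to their values at $P:=\lim x_j$, which an innocuous further constant rescaling normalizes to $1$.

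Next I would pass to the limit. The right-hand sides of the rescaled system are bounded in $C^{0,\theta}_{\mathrm{loc}}$; here one needs a local $L^\I$ bound on the component that was \emph{not} normalized, namely $\hat v_j$, obtained by an elliptic bootstrap (or, if $\hat v_j$ itself concentrates, by a secondary rescaling that reduces to an analogous limit problem). Interior and boundary Schauder estimates then provide $C^{2,\theta}_{\mathrm{loc}}$ bounds, so a subsequence converges in $C^{2}_{\mathrm{loc}}$ to a pair $(u,v)$ solving, in $\R^n$ or in $\R^n_+$ with homogeneous Neumann data, a limit system $-\De u=g_\I(v)$, $-\De v=C_0\abs{u}^{p-1}u$ with $C_0>0$. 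Here $g_\I$ is the locally $C^1$ limit of the rescaled truncated nonlinearities $g_j$; depending on the relative blow-up rates of $u_j$ and $v_j$ --- equivalently, on whether the rescaled $v$--variable sees the truncation level $j$ --- this $g_\I$ is either identically zero or a power-type function satisfying conditions (a)--(c) preceding Proposition~\ref{p21}. Moreover $(u,v)\nequiv(0,0)$, because $\hat u_j(0)=1$ forces $u(0)=1$. In the boundary case I would first extend $(u,v)$ by even reflection across $\{x_n=0\}$, which preserves the equation and yields the whole-space problem.

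It remains to transport the Morse-index bound to the limit. Since $m(u_j,v_j)\le k$ for every $j$, the limit $(u,v)$ has \emph{finite index} in the sense of the definition preceding Proposition~\ref{p21}: if it did not, then for large $j$ one could exhibit more than $k$ test functions with pairwise disjoint supports on which the relevant quadratic form for $(u,v)$ is negative, and --- pulling these back through the (anisotropic) rescaling, so that the second variation $D^{2}I_{\eps_j}(u_j,v_j)$ converges to that quadratic form, and noting that even reflection does not change the index --- this would produce more than $k$ negative directions for $(u_j,v_j)$, contradicting the hypothesis. Proposition~\ref{p21} now applies: if $g_\I\equiv0$ it gives $u\equiv0$, while if $g_\I$ satisfies (a)--(c) it gives $u\equiv v\equiv0$; either way this contradicts $u(0)=1$. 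Hence $\norm{u_j}_\I+\norm{v_j}_\I$ must be bounded, which is the claim.

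I expect the main obstacle to be precisely this last step --- ensuring that the finite-Morse-index property is inherited by the blow-up limit, with the correct anisotropic scaling of test functions, so that Proposition~\ref{p21} is genuinely applicable --- together with the bookkeeping in the first step that guarantees $g_\I$ lands in one of the two cases of that proposition, and the subsidiary local bound for $\hat v_j$. The non-constant weights, on the other hand, cause no real trouble: they enter only through their uniform convergence on compact sets and the two-sided bound \ef{c-asum}, which keeps all the rescaling exponents well defined.
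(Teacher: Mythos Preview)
Your proposal is correct and follows the same blow-up plus Liouville strategy as the paper. The one substantive difference is the normalization: you take $M_j=\|u_j\|_\infty$ and are then left needing a separate local $L^\infty$ bound on $\hat v_j$ (or a secondary rescaling), which you rightly flag as the main remaining obstacle. The paper sidesteps this entirely by setting
\[
M_j=\sup_{\bar\Om}\max\bigl\{|u_j|^{1/(q+1)},|v_j|^{1/(p+1)}\bigr\},\qquad \lambda_j^{2}M_j^{\,pq-1}=1,
\]
and rescaling $\tilde u_j=M_j^{-(q+1)}u_j(\lambda_j\eps_j\cdot+x_j)$, $\tilde v_j=M_j^{-(p+1)}v_j(\lambda_j\eps_j\cdot+x_j)$; then $\|\tilde u_j\|_\infty,\|\tilde v_j\|_\infty\le1$ automatically and $C^2_{\mathrm{loc}}$ compactness is immediate. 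The case analysis is then organized by the single parameter $l:=\lim j/M_j^{p+1}$ (the ratio of the truncation level to the blow-up rate of $v_j$), which produces $g_\infty$ explicitly: a piecewise power satisfying (a)--(c) when $l>0$, and $g_\infty\equiv0$ when $l=0$. This is exactly your dichotomy, just parametrized more cleanly so that the ``subsidiary local bound'' and ``secondary rescaling'' never arise. Your handling of the variable coefficients via \ef{c-asum} and of the Morse-index transfer to the limit is correct and is precisely what the paper (tacitly) uses when invoking Proposition~\ref{p21}.
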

 \begin{proof} 
        Suppose $\norm{u_j}_\I+ \norm{v_j}_\I\to \I$. Define \be M_j =\sup_{x\in \bar{\Om}}\,\{\,max\,\{|u_j(x)|^{\f{1}{q+1}}\, ,\,|v_j(x)|^{\f{1}{p+1}}\} \}\nee
        Clearly $M_j\to \I$. Let $\lim_{j\to\I} \f{j}{M_j^{p+1}}=l\in[0,\I]$(upto a subsequence).\\
        $Case\,1.\,\,l>0$ \quad Now choose a sequence $\lambda_j\in \RPlus$ and $x_j\in \bar{\Om}$ such that $\lambda^2_jM^{pq-1}_j=1$  , $ M_j = \,\{\,max\,\{|u_j(x_j)|^{\f{1}{q+1}}\, ,\,|v_j(x_j)|^{\f{1}{p+1}}\} \}$ respectively.  Consider the blow-up scheme 
        \be  \tilde{u}_j(x)=\f{1}{M^{q+1}_j}u_j(\lambda_j\eps_j x+x_j) ,\quad \tilde{v}_j(x)=\f{1}{M^{p+1}_j}v_j(\lambda_j\eps_j x+x_j) \quad\text{in} \,\,\tilde{\Om}_j=\f{\Om-x_j}{\lambda_j\eps_j}\nee
        As $ \norm{\tilde{u}_j}_\I, \norm{\tilde{v}_j}_\I\leq 1$, so  $(\tilde{u}_j, \tilde{v}_j)$ converges in $C^2_{loc}$ to $(u,v)$, solves the limit problem $-\De u= g_\I(v),  -\De v= f_\I(u)$, where $f_\I(s)=a(x_0)|s|^p$ and
         \be g_\I(s) = b(x_0)
        \begin{cases}
        	\f{q}{p}l^{q-p} s^p + \f{p-q}{p}l^{q}&\text{for $ s\ge l $},\notag \\
        	\quad s^q&\text{for $ \abs{s} < l $},\notag \\
        	\f{q}{p}l^{q-p} s^p - \f{p-q}{p}l^{q}&\text{for $ s\le l $},\notag\\
        \end{cases}
        \notag\ee
        Since $\norm{ {u} }_\I, \norm{ {v} }_\I\leq 1$, one can easily check condition (a), (b) and (c) and Proposition \ref{p21} tells that $u=0=v$ contradicting the fact that one of $\tilde{u}_j(0), \ \tilde{v}_j(0)$ has to be one.\\ \\
        $Case\,2.\,\,l=0$ \quad Similar to $Case\,1.\,\, $  using Blow-up scheme, we will arrive at the limit problem whose solution gives contradiction as in \cite{MR2135746}.
 \end{proof}

 	\begin{lemma}
 		$I_\eps$ satisfies the Pales-Smale condition. 
 	\end{lemma}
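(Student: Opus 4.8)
The plan is to verify the Palais--Smale condition for the modified functional $I_\eps$ of $\ef{modeng}$, with $\eps>0$ and the truncation index $j$ kept fixed; then $g_j$ has subcritical growth of order $p$ with $2<p+1<\f{2n}{n-2}$, the functional is $C^2$ on $\ml{H}$, and it has the structure ``indefinite quadratic form minus subcritical nonlinearity''. Let $(u_n,v_n)\subset\ml{H}$ satisfy $I_\eps(u_n,v_n)\to c$ and $DI_\eps(u_n,v_n)\to 0$ in $\ml{H}^*$; I would proceed in three steps: (1) prove $(u_n,v_n)$ is bounded, (2) extract a weak limit and establish compactness of the nonlinear part, (3) upgrade weak convergence to strong convergence using the splitting $\ml{H}=E^+\oplus E^-$.

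\emph{Step 1 (boundedness).} Since $\langle\cdot,\cdot\rangle_\eps$ is indefinite on $\ml{H}$, the one-line Ambrosetti--Rabinowitz argument is unavailable; instead I would evaluate the first variation $DI_\eps(u_n,v_n)$ at the test pairs $(u_n,0)$, $(0,v_n)$, $(0,u_n)$, $(v_n,0)$, which yields $\langle u_n,v_n\rangle_\eps=\int_\Om af(u_n)u_n+o(\norm{u_n})=\int_\Om bg_j(v_n)v_n+o(\norm{v_n})$, $\norm{u_n}_\eps^2=\int_\Om bg_j(v_n)u_n+o(\norm{u_n})$, and the symmetric identity for $\norm{v_n}_\eps^2$. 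Combining these with $I_\eps(u_n,v_n)=c+o(1)$, the identity $f(s)s=(p+1)F(s)$, and the Ambrosetti--Rabinowitz type bound $g_j(s)s\ge(p+1)G_j(s)-C$ (read off from the explicit piecewise form of $g_j$), one first gets $\int_\Om[aF(u_n)+bG_j(v_n)]\le C+o(\norm{(u_n,v_n)})$, i.e.\ an $L^{p+1}$-bound of the same form on $u_n$ and $v_n$. Feeding this back into the two $\norm{\cdot}_\eps^2$-identities through H\"older's inequality, the growth bound $|f(s)|,|g_j(s)|\le C(1+|s|^p)$, the Sobolev embedding $H^1(\Om)\hookrightarrow L^{p+1}(\Om)$, and the equivalence $\norm{\cdot}_\eps\sim\norm{\cdot}_{H^1}$ (for fixed $\eps$), gives $\norm{(u_n,v_n)}^2\le C\big(1+o(\norm{(u_n,v_n)})\,\norm{(u_n,v_n)}\big)$, hence boundedness.

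\emph{Steps 2 and 3 (compactness and strong convergence).} Passing to a subsequence, $(u_n,v_n)\rightharpoonup(u,v)$ in $\ml{H}$ and, by Rellich--Kondrachov, $u_n\to u$, $v_n\to v$ in $L^r(\Om)$ for every $r<\f{2n}{n-2}$ and a.e.; since $f$ and $g_j$ are subcritical, the Nemytskii maps $u\mapsto af(u)$ and $v\mapsto bg_j(v)$ are compact from $H^1(\Om)$ into $H^1(\Om)^*$, so the derivative $(af(u_n),bg_j(v_n))$ of the nonlinear part of $I_\eps$ converges strongly in $\ml{H}^*$. For the final step, use the decomposition $\ml{H}=E^+\oplus E^-$ recorded after $\ef{ipI1}$: write $(u_n,v_n)=z_n^++z_n^-$, $(u,v)=z^++z^-$, and let $B$ be the symmetric bilinear form $B((u,v),(\phi,\psi))=\langle u,\psi\rangle_\eps+\langle\phi,v\rangle_\eps$ appearing in $\ef{denI1}$, which vanishes between $E^+$ and $E^-$, is positive definite and equivalent to $\norm{\cdot}^2$ on $E^+$, and negative definite on $E^-$. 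Testing $DI_\eps(u_n,v_n)\to 0$ against $z_n^+-z^+\in E^+$ and using $z_n^+\rightharpoonup z^+$ together with the strong convergence of the nonlinear part forces $B(z_n^+-z^+,z_n^+-z^+)\to0$, hence $\norm{z_n^+-z^+}\to0$; the same argument on $E^-$ (with $-B$) gives $\norm{z_n^--z^-}\to0$, so $(u_n,v_n)\to(u,v)$ in $\ml{H}$.

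I expect the last step to be the main obstacle: because $I_\eps$ is strongly indefinite, testing $DI_\eps(u_n,v_n)$ directly against $(u_n-u,v_n-v)$ does not control that difference, so the splitting into the positive and negative subspaces is indispensable; for the same reason the boundedness step is a bit more delicate than in the coercive case, as one must first extract the $L^{p+1}$ bounds and only then bootstrap to an $H^1$ bound. Once these points are handled, all remaining estimates are routine consequences of the subcriticality of the truncated nonlinearity.
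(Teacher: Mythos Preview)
Your proposal is correct and, for the boundedness step, coincides with the paper's argument: the paper also obtains the $L^{p+1}$ control from $2I_\eps(u_n,v_n)-I'_\eps(u_n,v_n)(u_n,v_n)$ together with $f(s)s=(p+1)F(s)$ and $g_j(s)s\ge(p+1)G_j(s)$, and then bounds $\norm{u_n}_\eps^2+\norm{v_n}_\eps^2$ by evaluating $I'_\eps(u_n,v_n)$ at the swapped pair $(v_n,u_n)$ and using $a^{p+1}+b^{p+1}\ge a^pb+ab^p$; your four test pairs $(u_n,0),(0,v_n),(0,u_n),(v_n,0)$ are just the components of these two evaluations.

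The only genuine difference is in Step~3. The paper passes through a Brezis--Lieb splitting, showing $I'_\eps(\bar u_n,\bar v_n)\to0$ for $\bar u_n=u_n-u$, $\bar v_n=v_n-v$, and then tests this against the \emph{swapped} pair $(\bar v_n,\bar u_n)$, which produces $\norm{\bar u_n}_\eps^2+\norm{\bar v_n}_\eps^2$ on the left and integrals of the form $\int af(\bar u_n)\bar v_n+bg_j(\bar v_n)\bar u_n$ on the right, the latter tending to zero by compact embedding. Your route via the decomposition $\ml H=E^+\oplus E^-$ and compactness of the Nemytskii maps avoids the Brezis--Lieb step altogether; note that the swapped test $(\bar v_n,\bar u_n)$ is exactly $(z_n^+-z^+)-(z_n^--z^-)$ in your notation, so the two arguments are algebraically equivalent, with your version separating the positive and negative contributions and thereby making the role of the indefiniteness more transparent. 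Either approach is standard and neither is materially shorter.
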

 	\begin{proof}
 		For a Palais-Smale sequence $(u_n,v_n)$ of $I_\eps$ we have 
 		\be I'_\eps(u_n,v_n)(u_n,v_n)=2\inp{u_n,v_n}_\eps -\int_\Om a(x)f(u_n) u_n dx-\int_\Om b(x)g_j(v_n) v_n dx\label{Se1.1}
 		\ee
 		Noting that $ g_j(s)s \ge (p+1)G_j(s)$, and $f(s)s = (p+1) F(s)$,  for all $s$, we have 
 		
 		\begin{align}
 			2I_\eps(u_n,v_n)&-I'_\eps(u_n,v_n)(u_n,v_n) = \int_\Om a(x)(f(u_n)u_n-2F(u_n) )+\int_\Om b(x)(g_j(v_n)v_n-2G_j(v_n))\notag   \\ 
 			& \ge \int_\Omega a(x)\(\frac {p-1}{p+1}\) f(u_n)u_n  +  \int_\Omega b(x)\(\frac {p-1}{p+1}\) g_j(v_n)v_n \label{Se1.6}
 		\end{align}
 		and hence
 		\be \int_\Om\Big[f(u_n)u_n+g_j(v_n)v_n\Big]dx \le C_1+C_2\,\mu_n(\norm{u_n}_\eps+\norm{v_n}_\eps).\notag\ee
 		Now note that $a^{p+1}+b^{p+1}-a^pb-ab^p=(a-b)(a^p-b^p)\ge0$, and that there exists a constant $C\equiv C(j,A_j,B_j)$ such that $\int_\Om g_j(v_n)u_n dx \le C\int_\Om v_n^pu_n dx$. Then we can estimate
 		\begin{align}
 			\norm{u_n}^2_\eps+\norm{v_n}^2_\eps& \le I'_\eps(u_n,v_n)(v_n,u_n) +C \int_\Om\Big[a(x)u_n^pv_n+b(x)v_n^pu_n\Big]dx \notag\\
 			&\le \mu_n(\norm{u_n}_\eps+\norm{v_n}_\eps)+ C_1\int_\Om\Big[u_n^{p+1}+v_n^{p+1}\Big]dx\notag\\
 			&\le \mu_n(\norm{u_n}_\eps+\norm{v_n}_\eps)+ C_2\int_\Om\Big[f(u_n)u_n+g_j(v_n)v_n\Big]dx\notag\\
 			&\le C_3+C_4\,\mu_n(\norm{u_n}_\eps+\norm{v_n}_\eps)\label{Se1.2}
 		\end{align}
 		So we have $\norm{u_n}_\eps+\norm{v_n}_\eps\le C$.
 		Hence every Palais-Smale sequence $(u_n,v_n)$ is bounded. Then we have up to  subsequence $u_n\rightharpoonup u$ and $v_n\rightharpoonup v$ in $\ml{H}$, and clearly $I'_\eps(u,v)=0$. Now from \eqref{Se1.6} we have for large $j$ 
 		\begin{align} 
 			2I_\eps(u,v)&=\f{p-1}{p+1}\int_\Om a(x)(u^+)^{p+1} dx
 			+\f{q-1}{q+1}\int_{\Om\cap\{v\le j\}}b(x)(v^+)^{q+1}dx\notag\\
 			+ &\int_{\Om\cap\{v\ge j\}} b(x)\Big(\f{p-1}{p+1}A_j v^{p+1} + B_j v\Big)dx\ge 0\label{Se1.7}
 		\end{align}
 		Thanks to compact embedding,    
 		\be \int_{\Om} F(u_n-u)dx= \int_\Om F(u_n)dx - \int_\Om F(u)dx+o(1),\label{Se1.8}\ee and the same is true for $G_j(u_n-u)$. Hence we have for $(\phi,\psi)\in \ml{H}$
 		\be \abs{\int_\Om f(u_n-u)\phi dx - \int_\Om f(u_n)\phi dx + \int_\Om f(u)\phi dx}\le o(1)\norm{\phi},\label{Se1.9}\ee and the same for $g_j, \psi$.
 		\par \smallskip
 		Plugging $\bar{u}_n=u_n-u$ and $\bar{v}_n=v_n-v$ into \ef{Se1.1}  and from \ef{Se1.8}, \ef{Se1.9},
 		\begin{align}
 			I_\eps(\bar{u}_n,\bar{v}_n)\notag
 			=&\int_\Om\eps^2\Big[\inp{\bar{u}_n,\bar{v}_n}+ c(x)\bar{u}_n\bar{v}_n - a(x)F(\bar{u}) - b(x)G_j(\bar{v})\Big]dx\notag\\
 			=& I_\eps(u_n,v_n)-I_\eps(u,v)+o(1)\notag 
 		\end{align}
 		It is easy to see 
 		\be I'_\eps(\bar{u}_n,\bar{v}_n)(\phi,\psi)= I'_\eps(u_n,v_n)(\phi,\psi) - I'_\eps(u,v)(\phi,\psi)+o(1)=o(1)\notag\ee
 		
 		\be \norm{\bar{u}_n}^2_\eps + \norm{\bar{v}_n}^2_\eps=I'_\eps(\bar{u}_n,\bar{v}_n)(\bar{v}_n,\bar{u}_n)+
 		\int_\Om\Big[\f{\bar{u}_n^p\bar{v}_n}{(2\abs{x})^{1-\f{\al}{2}}}+\f{\bar{v}^p\bar{u}_n}{(2\abs{x})^{1-\f{\beta}{2}}}\Big]dx\to 0\notag\ee
 	\end{proof}

 	We consider the following orthogonal splitting of the space  $\ml{H}^1$.  
 	\be 
 	\ml{H}=\ml{H}_+\oplus\ml{H}_-, \text{ where } \ml{H}_{\pm}:=\{(\phi,\pm\phi):\phi\in H^1(\Om)\}\notag
 	\ee
 	
 	It is straight foreword to verify the conditions of Theorem 1.1, \cite{MR1804760} i.e.$I_\eps\leq 0 $ in $\ml{H}_-$ and $I_\eps\geq\rho>0$ in $\ml{H}_+\cap \pa B(0,r)$ for some small $r,\rho >0$. Furthermore, for a large $R(\eps)>0$ with $0<e=(e_1,e_2)\in\ml{H} $ such that \be\sup_{(\ml{H}_-\bigoplus \R^+e)\cap \pa B(0,R)}I_\eps\leq0 .\notag\ee  Then $I_\eps$ admits a critical point $(u_\eps,v_\eps)$ having morse index $m(u_\eps,v_\eps)\leq 1$ with energy $I_\eps(u_\eps,v_\eps)$ bounded between $\rho$ and $\sup_{(\ml{H}_-\bigoplus \R^+e)}I_\eps$. Let

 	\be 
 	\mathcal{N}_\eps=\big\{(u,v):(u,v)\in\ml{H}\backslash\{(0,0)\},I'_\eps(u,v)=0 \text{ and } I_\eps(u,v)\geq \rho \big\} \ , \notag
 	\ee
 	
 	As $\mathcal{N}$ is a nonempty set, lets define the minimal energy critical level \be c_\eps:= \inf_{(u ,v )\in\mathcal{N}_\eps}I_\eps(u ,v )\notag \ee
 	
 	The Palais-smale property of $I_\eps$ ensures the existence of  $(u_\eps,v_\eps)\in \mathcal{N}$ s.t. $I_\eps(u_\eps,v_\eps)=c_\eps$. Again as in Theorem 1.1  \cite{MR1804760}, We may assume $m(u_\eps,v_\eps)\leq 1$. Thanks to Hopf's lemma,  $(u_\eps,v_\eps)>0$. Again, relative Morse index gives that solutions are non-constants.

 	\par \medskip
 	
 	\section{\textbf{ Preliminary estimates}}
 	
 	Let $u_\eps,v_\eps\in\ml{H}$ be any ground-state solutions for system \eqref{MI1}. 
 	Then $u_\eps > 0$, $v_\eps > 0$. As $u_\eps,v_\eps\in\ml{H}$ uniformly bounded, from now on we shall denote $g_j$ by $g$ and $G_j$ by $G$. Let  $x_\eps \in \bar{\Om}$ be such that \be \max_{\bar{\Om}} u_\eps=u_\eps(x_\eps).\notag\ee
 	Let us fix a sequence $\eps_j$ in such a way that $x_j:=x_{\eps_j}\to x_0\in\bar{\Om}$ and $z_j\to z_0\in\pa\Om$ where $z_j\in\pa\Om$ such that
 	\be d_j:=dist(x_j,\pa\Om)=\abs{x_j-z_j}\notag\ee 
 	
 	We denote $u_j:=u_{\eps_j}$ and $v_j:=v_{\eps_j}$. The re-scaled solutions \be\bar{u}_j(x):=u_j(\eps_jx+x_j), \ \bar{v}_j(x):=v_j(\eps_jx+x_j), x\in\Om_j:=\tf{1}{\eps_j}(\Om-x_j)\label{Chv1}\ee
 	solve the system
 	
 	\begin{equation}
 	\label{RSE}
 	\left\{\begin{aligned}
 	-\De \bar{u}_j + c( \eps_jx+x_j)\bar{u}_j= b(\eps_jx+x_j)g(\bar{v}_j), &\text{ and }\\ 
 	-\De \bar{v}_j + c( \eps_jx+x_j) \bar{v}_j= a( \eps_jx+x_j)f(\bar{u}_j)   &\text{ in } \Om_j\vspace{0.2cm}\\
 	\bar{u}_j>0, \ \bar{v}_j>0 \text{ in } \Om_j, \text{ and } \f{\pa \bar{u}_j}{\pa\nu} = 0 = \f{\pa \bar{v}_j}{\pa\nu} &&\text{on }\partial \Om_j
 	\end{aligned}
 	\right.
 	\end{equation}\\
 	
 	Furthermore, the corresponding energy functional of \ref{RSE}  in Cartesian coordinates takes the form
 	\begin{align}
 		\label{enRE}&I_j(u_j,v_j)=\notag\\ 
 		&\int_{\Om_j}\langle\grad u_j,\grad v_j\rangle + c(x_j+\eps_j x)u_{j}v_j- a(x_j+\eps_j x)F(u_j)-b(x_j+\eps_j x)G(v_j)dx
 	\end{align}
 	where $F(s)= \int_{0}^{s}f(t)dt$ and $G(s)= \int_{0}^{s}g(t)dt$. According to proposition \ref{unibond} (see \cite{MR2057542}), we may assume $p=q$. The critical points of the functional \ef{enRE} are now the solutions of the system  \ef{RSE}. By $L^p$ and the Schauder estimate, we have the convergence in $H^1(\rn)$ and in $C^2_{loc}(\rn)$ to a nonzero solution of the limit system\\
 	\begin{equation}
 	\label{RLE}
 	\left\{\begin{aligned}
 	-\De u +c(x_0) u=  b(x_0)g(v), &\text{ and } 
 	-\De v + c(x_0) v =  a(x_0) f(u) &&\text{in } \ml{U}\\
 	     u>0, \ v>0 \text{ in } \ml{U}, &\text{ and } \quad\f{\pa u}{\pa \nu} = 0 = \f{\pa v}{\pa \nu}  \quad\text{on }\partial \ml{U} 
 	\end{aligned}
 	\right.
 	\end{equation}\\
 	where $\ml{U}$ is the open set $\ml{U}=\{x\in\R^n : \inp{x,n(x_0)}<\rho_0\}$, where 
 	\be \rho_0=\lim_{j\to \I}\rho_j, \ \rho_j:=d_j/\eps_j\notag
 	\ee
 	The corresponding energy functional is \\
 	\be I_{\abs{x_0}}(u,v)=\int_\ml{U}\Big[\langle\grad u,\grad v\rangle + c( x_0) uv -a(x_0)F(u) -
 	b(x_0)G(v) \Big]dx\label{enRLE}\ee\\
 Since $\rho_j \to 0 $ ((a), lemma \ref{UMPlemma}), so without loss of generality we can take $\ml{U}=\rn_+$. \\ \\
 		\begin{remark}\label{remark}
 			\end{remark}
 		Suppose $I''_\I(u,v)(\phi u,\phi u)(\phi u,\phi u) \geq 0$ and $ I''_\I(u,v)(\phi v,\phi v)(\phi v,\phi v) \geq 0$ for all test function in $\ml{U}.$  Multiplying both sides of $-\De u +c(x_0) u=  b(x_0) v^q$ and $-\De v + c(x_0) v =  a(x_0) u^p$ with $\phi^2u $ and $\phi^2v $ respectively we get  
  
 	

 	\begin{align*}
 		\int 2(u^2+v^2)|\grad \phi|^2\geq &\int \phi^2[(u^2+v^2)(a(x_0)f^\prime (u)+b(x_0)g^\prime (v))-    (a(x_0)f(u)v+b(x_0)g(v)u)]\\
 		&\geq \int \phi^2(a(x_0)f(u)v+b(x_0)g(v)u)
 		\notag 
 	\end{align*}
 	Last step follows from the fact $p,q>1$ and  $u^2+v^2\geq2uv$. \\ \\
 
 	 Now we shall discuss some results similar to the scalar case as in Boyen and Park \cite{MR2180862}.
 	\begin{proposition}
 		Let $u,v$ be any positive radially symmetric solution to the problem \ef{RLE},then 
 		\begin{align*}
 			(i).\,& \int_{\rn_{+}} \f{\pa u}{\pa z_n}  \f{\pa v}{\pa z_n} z_n dz=\f{2}{n+1}\int_{\rn_{+}}\langle \grad u ,  \grad v\rangle z_n dz\\
 			(ii).\,&\int_{\rn_{+}}[a(x_0)\Big(\f{1}{2}f(u)u-F(u)\Big)+ b(x_0)\Big(\f{1}{2}g(v)v-G(v)\Big)]z_n\\&=\int_{\rn_{+}}\Big[\langle\grad u,\grad v\rangle + c( x_0) uv -a(x_0)F(u) -
 			b(x_0)G(v) \Big]z_n dz + \f{1}{2}\int_{\pa\rn_+}uvd\sigma\\
 			(iii).\,&\int_{\rn_{+}}\Big[\langle\grad u,\grad v\rangle + c( x_0) uv -a(x_0)F(u) -
 			b(x_0)G(v) \Big]z_n dz=2\int_{\rn_{+}}\f{\pa u}{\pa z_n}  \f{\pa v}{\pa z_n} z_n dz\\
 			(iv).\,&\int_{\rn_{+}}\f{\pa u}{\pa z_i}  \f{\pa v}{\pa z_i} z_n=\f{1}{n+1}\int_{\rn_{+}}\langle \grad u ,  \grad v\rangle z_n dz\,,\quad  \,i=1,2,...,n-1.
 		\end{align*} 
 		\label{estimate}
 	\end{proposition}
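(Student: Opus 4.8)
The four relations are Rellich--Pohozaev-type identities, and the plan is to obtain each of them by testing the limit system \ef{RLE} (with $\ml{U}=\R^n_+$) against a suitably weighted multiplier and integrating by parts; radial symmetry of $(u,v)$ is used only for parts (i) and (iv), whereas (ii) and (iii) in fact hold for any solution with the decay described below. Write $c_0=c(x_0)$, $a_0=a(x_0)$, $b_0=b(x_0)$. Every integral below converges absolutely and every boundary term at infinity vanishes, because $u,v$ together with their first and second derivatives decay exponentially as $|z|\to\infty$ --- a standard consequence of $c_0>0$ obtained by comparison with $e^{-\de|z|}$; this decay is really the only delicate point, the remaining steps being routine rearrangement.

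\emph{Parts (i) and (iv).} Here the equation is not needed at all. Since $u(z)=u(r)$ and $v(z)=v(r)$ with $r=|z|$, we have $\pa_{z_i}u\,\pa_{z_i}v=u'(r)v'(r)\,z_i^2/r^2$ and $\langle\grad u,\grad v\rangle=u'(r)v'(r)$, so writing $z=r\om$ with $\om$ in the open upper hemisphere $S^{n-1}_+=\{\om\in S^{n-1}:\om_n>0\}$, each integral occurring in (i) and (iv) factors as $\big(\int_0^\infty u'(r)v'(r)\,r^{n}\,dr\big)$ times a spherical integral. With $I_k:=\int_{S^{n-1}_+}\om_n^k\,d\om$ and $J:=\int_{S^{n-1}_+}\om_i^2\om_n\,d\om$ (independent of $i\le n-1$ by symmetry), identity (i) becomes $I_3/I_1=2/(n+1)$ and identity (iv) becomes $J/I_1=1/(n+1)$. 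The substitution $\om_n=\cos\theta$, $d\om=\sin^{n-2}\theta\,d\theta\,d\om'$, gives $I_1=\tf1{n-1}\,|S^{n-2}|$ and $I_3=\tf2{(n-1)(n+1)}\,|S^{n-2}|$, which is the first; and integrating $\sum_{i=1}^n\om_i^2=1$ over $S^{n-1}_+$ against $\om_n$ gives $(n-1)J+I_3=I_1$, so $J=(I_1-I_3)/(n-1)=I_1/(n+1)$, which is the second.

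\emph{Part (ii).} I would multiply the first equation of \ef{RLE} by $z_nv$ and the second by $z_nu$, integrate over $\R^n_+$, and apply Green's identity; the multipliers vanish on $\{z_n=0\}$ (where also $\pa_\nu u=\pa_\nu v=0$), so the Laplacian contributes no boundary term, and $\grad(z_nv)=v\,e_n+z_n\grad v$ yields
\begin{equation*}
\int_{\R^n_+} v\,\pa_n u+\int_{\R^n_+} z_n\langle\grad u,\grad v\rangle+c_0\int_{\R^n_+} z_n uv=b_0\int_{\R^n_+} z_n\,g(v)v,
\end{equation*}
together with the companion relation obtained by interchanging $u$ and $v$ and replacing $b_0g(v)v$ by $a_0f(u)u$. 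Adding the two, using $v\,\pa_n u+u\,\pa_n v=\pa_n(uv)$ and converting $\int_{\R^n_+}\pa_n(uv)\,dz$ into a boundary integral over $\pa\R^n_+$ by the divergence theorem, then dividing by $2$ and subtracting $\int_{\R^n_+}z_n\big(a_0F(u)+b_0G(v)\big)\,dz$ from both sides, produces (ii); some care with the orientation of $d\si$ is needed here, since this is the one boundary term in the proposition that does not vanish identically.

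\emph{Part (iii).} Now I would test the first equation with $z_n^2\,\pa_n v$ and the second with $z_n^2\,\pa_n u$; the factor $z_n^2$ annihilates every boundary term on $\{z_n=0\}$. Using $\grad(z_n^2\,\pa_n v)=2z_n(\pa_n v)\,e_n+z_n^2\grad(\pa_n v)$, the product-rule identities $\grad u\cdot\grad(\pa_n v)+\grad v\cdot\grad(\pa_n u)=\pa_n\langle\grad u,\grad v\rangle$, $u\,\pa_n v+v\,\pa_n u=\pa_n(uv)$, $f(u)\,\pa_n u=\pa_n F(u)$, $g(v)\,\pa_n v=\pa_n G(v)$, and the integration by parts $\int_{\R^n_+}z_n^2\,\pa_n h\,dz=-2\int_{\R^n_+}z_n h\,dz$, one adds the two tested equations and divides by $2$ to reach (iii). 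Equivalently, (iii) is the statement $\int_{\R^n_+}z_n\,T_{nn}\,dz=0$ for the stress tensor $T_{ij}=\pa_i u\,\pa_j v+\pa_j u\,\pa_i v-\de_{ij}\big(\langle\grad u,\grad v\rangle+c_0uv-a_0F(u)-b_0G(v)\big)$, which satisfies $\sum_j\pa_j T_{ij}=0$ by \ef{RLE}, applied with the vector field $X=\tf12 z_n^2 e_n$; the boundary flux vanishes because of the factor $z_n^2$, which is also why (iii), unlike (ii), carries no boundary term. The main obstacle in all four cases is simply to secure the exponential decay of $u,v$ and their derivatives that legitimizes these integrations by parts; given that, every step above is elementary.
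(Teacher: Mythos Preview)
Your proposal is correct and follows essentially the same route as the paper: the paper's own proof merely says that (i) and (iv) come from polar coordinates using radiality, (ii) from integration by parts, and (iii) from testing the two equations against $z_n^2\partial_n u$ and $z_n^2\partial_n v$, which is precisely what you carry out in detail. Your explicit evaluation of the hemispherical moments $I_1,I_3,J$ and your stress--tensor reformulation of (iii) are welcome elaborations but not a different method; your caveat about the orientation of $d\sigma$ in (ii) is well placed, since the computation you describe actually produces the boundary term with the opposite sign to the one displayed in the statement (the paper's Lemma~\ref{APLemma5} carries the same sign convention).
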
 	
 	
 	\begin{proof}
 		We start with the polar coordinate system as in \cite{MR1115095}. Since $u,v$ are radial, using integration in polar coordinates $(i),(iv)$  follows. The second estimation can be seen by integration by-parts formula. Multiplying $z^2_n\f{\pa u}{\pa z_n}$ and $z^2_n\f{\pa v}{\pa z_n}$ with $-\De v + c(x_0) v =  a(x_0) u^p$ and $-\De u +c(x_0) u=  b(x_0) v^q$ respectively, followed by integration gives the required estimate.
 	\end{proof}

 	Towards the goal $i.e.$ to characterize the location of spikes, we consider the following change of variable.
 	\begin{align}\label{LimChVr1}
 		U(x)=\(\f{b(x_0)}{c(x_0)}\)^{\al_1}\(\f{a(x_0)}{c(x_0)}\)^{\ba_1}u(\f{x}{ \sqrt{c( x_0)}})\\
 		V(x)=\(\f{b(x_0)}{c(x_0)}\)^{\al_2}\(\f{a(x_0)}{c(x_0)}\)^{\ba_2}v(\f{x}{ \sqrt{c( x_0)}}) \label{LimChVr}
 	\end{align}
 	where $\al_1=\f{ 1}{(pq-1)}$ , $\al_2=\f{ p}{(pq-1)}$ , $\ba_1=\f{ q}{(pq-1)} $ and $\ba_2=\f{ 1}{(pq-1)} $. 
 	
 	Under the above transformations, system \eqref{RLE} modified into following system.
 	\begin{equation}
 	\label{CRLE}
 	\left\{\begin{aligned}
 	-\De U +U=V^q, &\text{ and } -\De V +V=U^p &&\text{in } \R^n_{+}\\
 	U>0, \ U>0 \text{ in } R^n_{+}, &\text{ and } \f{\pa U}{\pa\nu} = 0 = \f{\pa V}{\pa\nu} &&\text{on } \pa R^n_{+}.
 	\end{aligned}
 	\right.
 	\end{equation} We denote the energy 
 	\be I_{\infty}(u,v)=\int_{\R^{n}_+}\Big[\langle\grad u,\grad v\rangle +  uv - F(u) -
 	G(v) \Big]dx \label{EnLp}\ee and 
 	\be \Lambda(x) =  \(\f{b(x)}{c(x)}\)^{-\al_1-\al_2}\(\f{a(x)}{c(x)}\)^{-\ba_1-\ba_2}   (c(x))^{1-\f{n}{2}} \label{concfn}\ee

 	Under the above change of variable, One can easily see the energy takes the form
 	
 	
 	
 	\begin{align}
 		I_{|x_0|}(u,v)&= \(\f{b(x_0)}{c(x_0)}\)^{-\al_1-\al_2}\(\f{a(x_0)}{c(x_0)}\)^{-\ba_1-\ba_2}   (c(x_0))^{1-\f{n}{2}} I_{\infty}(U,V)\label{enCRLE}\\ 
 		&=  \Lambda(x_0) \,\, I_{\infty}(U,V)\notag
 	\end{align}
 	
 	It is well known (see \cite{MR1617988, MR1785681}) that all strong positive solutions of \eqref{CRLE} are radially symmetric, and there exists a 
 	ground state radially symmetric solution $U,V$ of (\ref{CRLE}) such that $U(x)=U(|x|)$ and $V(x)=V(|x|)$, satisfying the decay estimates
 	\be 
 	|D^\alpha U(x)|, \ |D^\alpha V(x)|\le C\exp(-\delta|x|),\label{DcyLe}\ee for some $c,\delta>0$ and for all $|\alpha|\le 2$.\\
 	 \par


 	\begin{lemma}\label{UMPlemma}
 		Let  $u_j, v_js $ be the solutions of \ref{I1}. Then 
 		\begin{itemize}
 			\item [(a)]There exists $ C ,\eps_0>0 $ such that for any $ 0<\eps \leq \eps_0 $ ,$ u_j $ has a maximum point $x_j$ satisfying 
 			\be dist(x_j, \pa\Om )\leq C \eps \notag\ee 
 			\item [(b)]$x_j \in \pa\Om $ for j sufficiently large.
 			\item [(c)] $x_j$ is also the unique max point of $v_j$ for j sufficiently large.
 		\end{itemize}
 	\end{lemma}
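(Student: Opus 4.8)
By Proposition \ref{unibond} (applicable since $m(u_j,v_j)\le1$), the solutions are uniformly bounded, so for $j$ large the truncation in \eqref{MI1} is inactive and $(u_j,v_j)$ solves the genuine system \eqref{I1}; we also use that we may take $p=q$. The whole statement comes from a two-scale blow-up resting on Proposition \ref{unibond}, the Liouville theorem Proposition \ref{p21}, and the classification of positive solutions of \eqref{CRLE}. \emph{Step 1 (uniform lower bound).} I first record $\norm{u_j}_\I+\norm{v_j}_\I\ge c_0>0$: testing the two equations of \eqref{I1} with $u_j$ and $v_j$ and using \eqref{c-asum} together with $v_j^{q}\le\norm{v_j}_\I^{q-1}v_j$, $u_j^{p}\le\norm{u_j}_\I^{p-1}u_j$, if both sup-norms tended to $0$ the right-hand sides would be $o(1)\int_\Om(u_j^2+v_j^2)$ while the left-hand sides exceed $K_1\int_\Om(u_j^2+v_j^2)$, forcing $u_j\equiv v_j\equiv0$. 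Evaluating $-\eps^2\De u_j+cu_j=bv_j^{q}$ at a maximum point $x_j$ of $u_j$ gives $c(x_j)u_j(x_j)\le b(x_j)v_j(x_j)^{q}$, so a lower bound on one sup-norm forces one on the other; hence we may take $u_j(x_j)=\norm{u_j}_\I\ge c_0$ and $\norm{v_j}_\I\ge c_0$.

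\emph{Step 2 (proof of (a)).} Suppose, along a subsequence, $\rho_j:=d_j/\eps_j\to\I$. The rescaled pair $(\bar u_j,\bar v_j)$ of \eqref{Chv1} solves \eqref{RSE} on $\Om_j$, and since $\mathrm{dist}(0,\pa\Om_j)=\rho_j\to\I$ the $\Om_j$ exhaust $\rn$. By Proposition \ref{unibond} the pair is uniformly bounded, so $L^p$ and Schauder estimates and a diagonal subsequence give $(\bar u_j,\bar v_j)\to(u,v)$ in $C^2_{loc}(\rn)$, a nonnegative solution of $-\De u+c(x_0)u=b(x_0)v^{q}$, $-\De v+c(x_0)v=a(x_0)u^{p}$ in $\rn$ with $u(0)\ge c_0$. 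The bound $m(u_j,v_j)\le1$ passes to the limit --- a test function supported outside a fixed ball on which $2\int|\grad\phi|^2-f'_\I(u)\phi^2-g'_\I(v)\phi^2<0$ would, after scaling back, yield two independent negative directions for $I_{\eps_j}$ once $j$ is large --- so $(u,v)$ has finite index. Rescaling the space variable to normalise $c(x_0)$ to $1$, Proposition \ref{p21}(ii) applies (the pure powers satisfy (a)--(c) since $p\le q$) and forces $u\equiv v\equiv0$, contradicting $u(0)\ge c_0$. Hence $\rho_j$ stays bounded, i.e. $\mathrm{dist}(x_j,\pa\Om)\le C\eps$.

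\emph{Step 3 (proof of (b) and (c)).} By (a), up to a subsequence $\rho_j\to\rho_0\ge0$. Let $z_j\in\pa\Om$ be closest to $x_j$, rotate so that the inner normal at $z_j$ is $e_n$, and set $\hat u_j(x)=u_j(\eps_jx+z_j)$, $\hat v_j(x)=v_j(\eps_jx+z_j)$ on $\hat\Om_j=\eps_j^{-1}(\Om-z_j)$. Then $\hat\Om_j\to\R^n_+$ and, as in Step 2, $(\hat u_j,\hat v_j)\to(\hat u,\hat v)$ in $C^2_{loc}(\overline{\R^n_+})$, a positive finite-index solution of the Neumann limit problem \eqref{RLE} on $\R^n_+$; the image of $x_j$ is $y_j=\rho_je_n\to\rho_0e_n$ and $\hat u(\rho_0e_n)=\max_{\R^n_+}\hat u$. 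After the change of variables \eqref{LimChVr1}--\eqref{LimChVr}, $(\hat U,\hat V)$ solves \eqref{CRLE} on $\R^n_+$ with Neumann data; even reflection across $\{x_n=0\}$, permitted by that boundary condition, yields a positive finite-energy solution of \eqref{CRLE} on $\rn$, which by \cite{MR1617988,MR1785681} is radially symmetric and strictly radially decreasing about a point that the reflection symmetry forces onto $\{x_n=0\}$; since $\hat U$ is radial about that point, so is $\hat V$, being the unique decaying solution of $-\De\hat V+\hat V=\hat U^{p}$. Thus $\max_{\R^n_+}\hat U$ is attained only on $\pa\R^n_+$, so $\rho_0e_n\in\pa\R^n_+$, i.e. $\rho_0=0$ and $d_j=o(\eps_j)$; translating along $\pa\R^n_+$ we take the centre to be $0$, so $\hat u,\hat v$ are radial about $0$ with a nondegenerate maximum there. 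Finally, reflecting $(\hat u_j,\hat v_j)$ across the asymptotically flat hypersurface $\pa\hat\Om_j$ produces a pair invariant under that reflection and converging in $C^2_{loc}$ to the reflected limit; nondegeneracy makes each reflected component have exactly one critical point near $0$ for $j$ large, which the reflection invariance places on $\pa\hat\Om_j$, and since the two limit components share the centre $0$ these critical points coincide. Scaling back: $x_j\in\pa\Om$ and it is the unique maximum point of both $u_j$ and $v_j$.

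\emph{Main obstacle.} Steps 1--2 are routine. The crux is Step 3: turning ``$d_j/\eps_j\to0$'' into ``$x_j\in\pa\Om$'' and making the two peaks genuinely coincide (not merely cluster) both rely on the nondegeneracy of the limiting profile and on running the reflection argument rigorously when $\pa\hat\Om_j$ is only $C^2$-close to a hyperplane; I expect the cleanest route to the coincidence to be a Lyapunov--Schmidt or topological-degree argument using that the ground state of \eqref{CRLE} is nondegenerate as a pair.
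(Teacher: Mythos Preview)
Your Step 2 contains a genuine error. Proposition \ref{p21} is a Liouville theorem for the \emph{scale-invariant} system $-\Delta u=g_\infty(v)$, $-\Delta v=f_\infty(u)$, with no zero-order term. The limit you obtain in the full-space blow-up is $-\Delta u+c(x_0)u=b(x_0)v^{q}$, $-\Delta v+c(x_0)v=a(x_0)u^{p}$; after your rescaling this becomes $-\Delta U+U=V^{q}$, $-\Delta V+V=U^{p}$ in $\rn$. That system is \emph{not} covered by Proposition \ref{p21}, and in fact admits nontrivial finite-index solutions --- precisely the radial ground states of \eqref{CRLE} extended to $\rn$, which have Morse index $1$ and satisfy the ``finite index'' condition because of their exponential decay. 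So your argument would force the $\rn$-ground state to vanish, which is false; the Liouville route cannot close (a).

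The paper obtains (a) by an energy-comparison argument, not a Liouville theorem. One first proves, via the Morse-index bound $m(u_j,v_j)\le1$ and Remark \ref{remark}, a uniform decay estimate of the type \eqref{FFFdecay}; this gives $I_j(\bar u_j,\bar v_j)=I_{|x_0|}(u,v)+o(1)$ where the right-hand side is the energy of the full-space limit over $\rn$. Radial symmetry yields $I_{|x_0|}^{\rn}(u,v)=2\,I_{|x_0|}^{\rn_+}(u,v)>I_{|x_0|}^{\rn_+}(u,v)$, and the latter dominates the least-energy level $c_{\eps_j}\eps_j^{-n}$ by the min--max characterisation (this is where Theorems 3.1 and 3.5 of \cite{MR2135746} enter). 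That strict inequality contradicts minimality of $(u_j,v_j)$, proving (a). Your Step 3 is closer in spirit to the paper; note, however, that the paper handles the coincidence of the two peaks in (c) not by reflecting across the curved boundary and invoking nondegeneracy, but by the same Morse-index-driven decay estimates (\eqref{fdecay}--\eqref{ffdecay}) to show any maximum $y_j$ of $v_j$ satisfies $|x_j-y_j|\le C\eps_j$, after which the unique nondegenerate maximum of the limit profile finishes the job.
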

 	\begin{proof}
 		Using the method of contradiction, result has been proved in \cite{MR2135746}. Here we will only sketch the difference arising due to positive weight coefficients. First we will prove $(b),(c)$ under assumption of $(a)$ i.e. $x_j\to x_0\in \pa \Om$ and argument based on Theorem 2.1 of \cite{MR2135746}. Using reflection through hyperplane and unique maximum point of limit problem with change of variable \ef{LimChVr1} \ef{LimChVr}, one can see  \be \f{dist(x_j, \pa\Om )}{\eps_j}\to 0   \text{  \quad as }j\to \I\notag\ee 
 		
 		From Blow-up argument \ef{Chv1}, $(\bar{u}_j, \bar{v}_j)\to(u,v)$ in $C^2_{loc}$, where $(u,v)$ solves the limit problem \ef{RLE} and we have seen $(u,v)$ has exponential decay at infinity. 
 		Already we observed that (see Remark \ef{remark}) for a test function, either  $I''_\I(u,v)(\phi u,\phi u)(\phi u,\phi u) < 0$ or $ I''_\I(u,v)(\phi v,\phi v)(\phi v,\phi v) < 0$  must hold. So we may assume that, there exist a test function $\phi_1$ with support in $B(0,R_0)$ and $I''_\I(u,v)(\phi_1 u,\phi_1 u)(\phi_1 u,\phi_1 u) < 0$. Again, $C^2_{loc}$ convergence allows to see that $I''_j(\bar{u}_j,\bar{v}_j)(\phi_1 u,\phi_1 u)(\phi_1 u,\phi_1 u) < 0$ for large enough $j$. Since $m(u_j,v_j)\leq1$, $I''_j(\bar{u}_j,\bar{v}_j)(h_j,h_j)( h_j,h_j) \geq 0$ for all $h_j\in H^1(\Om_j)$ with $h_j=0$  in $ B(0,R_0)$.\\ \\
 		
 	Since limit solution decays exponentially,	given $\delta>0,\ \exists R>0$ such that
 	\be \int_{\ml{U}\cap(B(0,2R)\backslash B(0,R))}a(x_0)f(u)v+b(x_0)g(v)u<\delta \label{decay4fg}\ee\\
 	so, From the fact $C^2_{loc}$ convergence, For a large j 
 	
 	\be 
 	\label{decay2}\int_{ \Om_j\cap(B(0,2R)\backslash B(0,R))}a(\eps_j x+x_j)f(\bar{u}_j)\bar{v}_j+b(\eps_j x+x_j)g(\bar{v}_j)\bar{u}_j<\delta
 	\ee\\
 	 Fix a smooth cut-off function $\psi$ such that $\psi=0$ in $B(0,R)$ and $\psi=1$ in $\complement B(0,2R)$. So  $I''_j(\bar{u}_j,\bar{v}_j)(h_j,h_j)( h_j,h_j) \geq 0$ for $h_j=\psi \bar{u}_j$ as well as $\psi \bar{v}_j$ . Now remark \ref{remark} reads 
 	\be 
 	\label{decay3}
 	\int_{\Om_j\cap \complement B(0,2R)}  \(  a(\eps_jx+x_j)f(\bar{u}_j)\bar{v}_j+ b(\eps_jx+x_j)g(\bar{v}_j)\bar{u}_j\)<\delta
 	\ee \\
 	
 	Hence together \ef{decay2} and \ef{decay3} proves that for any $\delta>0$ there exist $R>0$(large enough) such that  
 	\be \label{fdecay}
 	\int_{\Om_j\cap \complement B(0,R)}  \(  a(\eps_jx+x_j)f(\bar{u}_j)\bar{v}_j+ b(\eps_jx+x_j)g(\bar{v}_j)\bar{u}_j\)<\delta
 	\ee \\

 	Let $y_j$ be any sequence of maximum points of $v_j$ in $\bar{\Om}$. First we claim that there exist a constant $C>0$	such that for $j$ very large, \be dist(x_j,y_j)\leq C\eps_j \nee
 	Define \be\tilde{u}_j(x):=u_j(\eps_jx+y_j), \ \tilde{v}_j(x):=v_j(\eps_jx+y_j), x\in\tilde{\Om}_j:=\tf{1}{\eps_j}(\Om-y_j) \nee
 	Since $\tilde{u}_j\to \tilde{u}, \tilde{v_j}\to \tilde{v}$ as in blow-up scheme defined for $\bar{u}_j,\bar{v}_j$ and $\tilde{v}(0)\ne0$, inequality \ef{fdecay} reads as
 	\be \label{ffdecay}
 	\int_{\tilde{\Om}_j\cap \complement B(0,R)}  \(  a(\eps_jx+y_j)f(\tilde{u}_j)\tilde{v}_j+ b(\eps_jx+y_j)g(\tilde{v}_j)\tilde{u}_j\)<\delta
 	\ee \\
 	
 	Denote $\eps_jz_j:=y_j-x_j$, if $|z_j|\to \I$ then both inequalities \ef{fdecay} and \ef{ffdecay} with \ef{c-asum} concludes
 	\be  
 	\int_{\Om_j }  \(   f(\bar{u}_j)\bar{v}_j+  g(\bar{v}_j)\bar{u}_j\)<2\delta
 	\ee \\	
 	Since $\delta$ is arbitrary, it contradicts the fact $\bar{v}(0)\ne0$.	This proves the claim.
 	Remaining estimations follow directly by lifting (see \ef{LimChVr1} and \ef{LimChVr} ) ODE condition and non degenerate maximum points from  $U,V$, radial solutions to the problem \ef{CRLE} to limit solution $u,v$.\\ 
 	Note that same proof \ef{decay4fg}-\ef{fdecay} can be carried to achieve 
 	\be \label{decayuv}
 	\int_{{\Om}_j\cap \complement B(0,R)}  \(  a(\eps_jx+y_j) \bar{u}^2_j+b(\eps_jx+y_j)\bar{v}^2_j\)<\delta
 	\ee  
 	
 	Similarly, again by contradiction argument we can observed that for any $\delta>0$ there exists $R>0, j_0\in N$ such that  \be |\bar{u}_j|+|\bar{v}_j|\leq \delta \, \quad,  x\in {\Om}_j\cap \complement B(0,R) \text{\,\,\,for all\,\,\,} j>j_0 \label{pointdecay}\ee.\\
 	
 	To see the proof, let there exist a $\rho>0$ and $z_n\in \Om_j$ such that $|z_n|\to \I$ and $|\bar{u}_j(z_j)|>\rho$ for all $j$. Define $\tilde{\Om}_j:=\Om_j-z_j$ and $\tilde{x}_j=\eps_jz_j+x_j \in \Om$.\\
 	Now set $\tilde{u}_j(x):=\bar{u}_j(x+z_j)=u_j(\eps_jx+\bar{x}_j)\,,\,\tilde{v}_j(x):=\bar{v}_j(x+ z_j)=v_j(\eps_jx+\bar{x}_j)$.\\
 	From the way of choosing the sequence $z_j$, we have $\tilde{u}_j(0)=u_j(\bar{x}_j)=\bar{u}_j(z_j)>\rho$ for all $j$.
 	Now \ef{fdecay}	reads as
 	\be  
 	\int_{\tilde{\Om}_j\cap \complement B(0,R)}  \(  a(\eps_jx+\bar{x}_j)f(\tilde{u}_j)\tilde{v}_j+ b(\eps_jx+\bar{x}_j)g(\tilde{v}_j)\tilde{u}_j\)<\delta
 	\ee 
 	\be  
 \Rightarrow	\int_{{\Om}_j\cap \complement B(z_j,R)}  \(  a(\eps_jx+{x}_j)f(\bar{u}_j)\bar{v}_j+ b(\eps_jx+{x}_j)g(\bar{v}_j)\bar{u}_j\)<\delta
 	\nee \\
 	
 	Since $z_j\to \I$, we have 	\be  
 	\int_{\Om_j }  \(   f(\bar{u}_j)\bar{v}_j+  g(\bar{v}_j)\bar{u}_j\)<2\delta
 	\nee \\
 	Which imply $C^1_{loc}$ limit of $\bar{u}_j,\bar{v}_j$ i.e. $u=v=0$. Which is a contradiction to the existence of the sequence $z_j$. This proves \ef{pointdecay}.
 	
 	Now, as $p,q>1$ from \ef{decayuv} and \ef{pointdecay}, for all $j>j_0$ we have 
 	\be \label{FFFdecay}
 	\int_{\Om_j\cap \complement B(0,R)}  \(  a(\eps_jx+x_j)f(\bar{u}_j)\bar{u}_j+ b(\eps_jx+x_j)g(\bar{v}_j)\bar{v}_j\)<\delta
 	\ee \\
 	
 	The estimation $(a)$ proved on basis of contradiction argument. Suppose there is no such $C$ such that $ dist(x_j, \pa\Om )\leq C \eps $ holds. let $\tilde{u}_j(x):=u_j(\eps_jx+x_j),  \tilde{v}_j(x):=v_j(\eps_jx+x_j), x\in\tilde{\Om}_j:={\eps_j}^{-1}(\Om-x_j) $ be the blow-up scheme with limit problem in $\R^n$, $u,v$ be the limit solution and both are readily symmetric with respect to origin. The decay property of $\tilde{u}_j,\tilde{v}_j$ i.e.\ef{FFFdecay} leads to the fact $I_j(\tilde{u}_j,\tilde{v}_j) + o(1)=I_{|x_0|}(u,v)$. Note that the energy $I_{|x_0|}(u,v)$ is over $\R^n$. The radial symmetry of limit solution $(u,v)$ gives,
 	\begin{align*} I_j(\tilde{u}_j,\tilde{v}_j) + o(1)&=I_{|x_0|}(u,v) \\
 		&=2\int_{ \R^{n}_+ }\Big[\langle\grad u,\grad v\rangle + c( x_0) uv -a(x_0)F(u) -
 		b(x_0)G(v) \Big]dx\\
 		&>\int_{ \R^{n}_+ }\Big[\langle\grad u,\grad v\rangle + c( x_0) uv -a(x_0)F(u) -
 		b(x_0)G(v) \Big]dx
 	\end{align*}\\
 	Now we will follow Theorem 3.1 and Theorem 3.5 in \cite{MR2135746} with Appendix A to get a contradiction.  The key idea of these theorems is about finding maximum points along an energy curve $\al(t)=I(t(u,v)+(1-t)(\phi,-\phi)$ , $\phi \in H^1$. The maximum point $t=1$ exists and the existence is proved by finding another curve $\ba(t)$ which intersect $\al(t)$ at critical points.  
 \end{proof}

 	\par \medskip 
 	\section{Upper Energy Estimation}
 	
 	
 	
 	\begin{theorem}\label{Lem.UpEs}
 		Let $(u_\eps,v_\eps)$ be a minimal energy solution of system \eqref{I1} with the corresponding functional \eqref{enI1}. Then we have 
 		\be 
 		c_{\eps_j}\leq \eps_j^n \Big( \Lambda(x_0)\,I_{\infty}(u,v)-\eps_j[(n-1)H(x_0)\ga +\eta] + o(\eps_j)\Big)\label{UpEnEs}
 		\ee 
 		for $j$ sufficiently large.
 	\end{theorem}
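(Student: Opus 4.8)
The plan is the standard upper--minimax (test-function) estimate: construct an approximate solution concentrated at a boundary point, insert it into the linking class that defines $c_\eps$, and expand the energy to order $\eps_j^{n+1}$. Work along the subsequence $\eps_j$ with $x_j\to x_0\in\pOm$ (Lemma \ref{UMPlemma}). Let $(U,V)$ be the radial ground state of \eqref{CRLE} with decay \eqref{DcyLe}, and let $(u_{x_0},v_{x_0})$ be obtained from $(U,V)$ by undoing the change of variables \eqref{LimChVr1}--\eqref{LimChVr} at $x_0$; then $(u_{x_0},v_{x_0})$ solves \eqref{RLE}, and reflected evenly across $\pa\rn_+$ it is a radial exponentially decaying solution on $\rn$. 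Choose boundary-straightening coordinates $y=\Psi(z)$ near $x_0$ with $\Psi(0)=x_0$, so that $\Om$ corresponds locally to $\{z_n>\varphi(z')\}$ with $\varphi(0)=0$, $\grad\varphi(0)=0$, $\De'\varphi(0)=-(n-1)H(x_0)$. With a fixed cut-off $\chi$ supported in a small coordinate half-ball, set
$$W^{\eps}_1(y)=\chi(y)\,u_{x_0}\!\big(\tfrac{1}{\eps}\Psi^{-1}(y)\big),\qquad W^{\eps}_2(y)=\chi(y)\,v_{x_0}\!\big(\tfrac{1}{\eps}\Psi^{-1}(y)\big).$$
Since $(u_{x_0},v_{x_0})$ is radial and centred on $\pOm$, the Neumann data and the equations are satisfied with a residual of size $O(\eps)$ in the relevant rescaled dual norm; the construction is identical at any boundary point.

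\textbf{Reduction of $c_{\eps_j}$.} As $(W^{\eps_j}_1,W^{\eps_j}_2)$ is positive and concentrated it realizes the linking geometry of Theorem 1.1 in \cite{MR1804760} (in particular $I_{\eps_j}\le 0$ on the boundary of a large half-ball of $\ml{H}_-\oplus\R^+(W^{\eps_j}_1,W^{\eps_j}_2)$), and since $c_{\eps_j}$ is the least critical value above $\rho$ we obtain $c_{\eps_j}\le\sup\{\,I_{\eps_j}(tW^{\eps_j}_1+\phi,\,tW^{\eps_j}_2-\phi):t\ge0,\ \phi\in H^1(\Om)\,\}$. The map $\phi\mapsto I_{\eps_j}(tW_1+\phi,tW_2-\phi)$ is strictly concave, so its maximum is attained at a unique $\phi_t$; the $O(\eps)$ size of the residual forces $\|\phi_t\|_{\eps_j}=O(\eps_j)$ (rescaled) uniformly for $t$ near $1$, and maximizing over $t$ (the maximum lying near $t=1$) gives $\sup(\cdots)=I_{\eps_j}(W^{\eps_j}_1,W^{\eps_j}_2)+o(\eps_j^{n+1})$.

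\textbf{Energy expansion.} Pass to $y=\Psi(\eps_j z)$: then $dy=\eps_j^n(1+O(\eps_j|z|))\,dz$, the rescaled domain differs from $\rn_+$ by a layer of width $\eps_j\psi(z')+O(\eps_j^2)$ with $\psi(z')=\tfrac12\langle D^2\varphi(0)z',z'\rangle$, and $a(\Psi(\eps_j z))=a(x_0)+\eps_j\langle\grad a(x_0),z\rangle+O(\eps_j^2)$, similarly for $b,c$. The $\eps_j^0$ part gives, by \eqref{enCRLE}, exactly $\eps_j^n\,\Lambda(x_0)\,I_\infty(U,V)$. At order $\eps_j^{n+1}$ two contributions survive. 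First, the tangential gradients of $a,b,c$ vanish by oddness in $z'$, leaving $\partial_{x_n}a(x_0),\partial_{x_n}b(x_0),\partial_{x_n}c(x_0)$ paired respectively against $\int_{\rn_+}z_nF(u_{x_0})$, $\int_{\rn_+}z_nG(v_{x_0})$, $\int_{\rn_+}z_nu_{x_0}v_{x_0}$; rewriting these through \eqref{LimChVr1}--\eqref{LimChVr} (the powers of $a(x_0),b(x_0),c(x_0)$ turn $\partial_{x_n}a$ into $\partial_{x_n}\ln a$, the rescaling $x\mapsto x/\sqrt{c(x_0)}$ produces the factor $1/\sqrt{c(x_0)}$) collects them into $\eta(x_0)=\tfrac{C_2}{\sqrt{c(x_0)}}\partial_{x_n}\!\big(C_3\ln a+C_4\ln b+C_5\ln c\big)(x_0)$. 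Second, the bending of the boundary contributes $\eps_j\int_{\R^{n-1}}\psi(z')\big[\langle\grad u_{x_0},\grad v_{x_0}\rangle+c(x_0)u_{x_0}v_{x_0}-a(x_0)F(u_{x_0})-b(x_0)G(v_{x_0})\big](z',0)\,dz'$; evaluating it via $\int_{\R^{n-1}}z_iz_j\,g(|z'|)=\tfrac{\de_{ij}}{n-1}\int|z'|^2g$, the identity $\De'\varphi(0)=-(n-1)H(x_0)$, and the Pohozaev-type relations of Proposition \ref{estimate} (which turn the remaining ground-state moments into multiples of $I_\infty(U,V)$) yields $-\eps_j(n-1)H(x_0)\gamma(x_0)$ with $\gamma(x_0)=C_1/\sqrt{c(x_0)}$. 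Collecting the three pieces gives $I_{\eps_j}(W^{\eps_j}_1,W^{\eps_j}_2)=\eps_j^n\big(\Lambda(x_0)I_\infty(U,V)-\eps_j[(n-1)H(x_0)\gamma(x_0)+\eta(x_0)]+o(\eps_j)\big)$, which is \eqref{UpEnEs}.

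\textbf{Main obstacle.} The hard part is to carry the expansion to order $\eps_j^{n+1}$ while controlling the indefinite $\ml{H}_-$-direction to the same precision: one needs a sharp bound on the residual $DI_{\eps_j}(W^{\eps_j}_1,W^{\eps_j}_2)$ in the correct rescaled dual norm together with a coercivity/nondegeneracy estimate for the linearization off the approximate kernel, so that the $\phi_t$-adjustment and the $t$-optimization cost only $o(\eps_j^{n+1})$. The other genuinely nontrivial step is to pin down the constants $C_1,\dots,C_5$ in closed form, for which the identities of Proposition \ref{estimate} are precisely the tool; the coordinate change, the Taylor expansions and the parity cancellations are then routine bookkeeping.
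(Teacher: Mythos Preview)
Your overall architecture is exactly the paper's: build a boundary-concentrated test pair from the limit ground state, feed it into the indefinite linking that produces $c_{\eps_j}$, show that the sup over $\ml{H}_-\oplus\R^+(\text{test pair})$ equals the energy of the test pair up to $o(\eps_j)$ (the paper's Lemma~\ref{lem4.4}), and finally expand that energy to first order in $\eps_j$. Your treatment of the sup step and of the coefficient-gradient contribution $\eta$ is fine and matches the paper.

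The genuine gap is in the curvature term. Your description of the coordinate change is internally inconsistent: you call $\Psi$ ``boundary-straightening'' and write a nontrivial Jacobian $dy=\eps_j^n(1+O(\eps_j|z|))\,dz$, yet you also say the rescaled domain differs from $\R^n_+$ by a layer $\{0<z_n<\eps_j\psi(z')\}$. These are two \emph{different} bookkeeping schemes and you cannot use both. The paper takes the first route: it uses the full diffeomorphism $\Phi$ of \eqref{BdMap}, so that in the $z$-variable the domain \emph{is} $\R^n_+$ and there is no layer at all. The curvature then enters through two distinct sources, namely the Jacobian $|D\Phi(\eps_j z)|=1-(n-1)H(x_0)\eps_j z_n+O(\eps_j^2)$ multiplying the zero-order pieces (this produces $\gamma(f)+\gamma(g)$ in Lemmas~\ref{Lem.UpEs2}--\ref{Lem.UpEs3}), \emph{and} the distortion of the Laplacian encoded in the explicit residuals $\mu_j,\nu_j$ of \eqref{MUj}--\eqref{NUj}, whose contribution $\tfrac12\int(\mu_jv_j+\nu_ju_j)$ gives the additional piece $\xi-\tau$ (Lemma~\ref{Lem.UpEs3}(3) and Lemmas~\ref{APLemma4}--\ref{APLemma5}). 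Only after summing all of these does one obtain $\gamma=\gamma(f)+\gamma(g)+\xi-\tau$, which Proposition~\ref{bconst} then rewrites as $\tfrac{5}{n+1}\tfrac{\Lambda}{\sqrt{c}}\int_{\R^n_+}\langle\nabla U,\nabla V\rangle z_n$.

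If instead you take the second route (affine $\Psi$, layer correction), the test pair is a different function and the layer integral $\int_{\R^{n-1}}\psi(z')\,\mathcal L(z',0)\,dz'$ picks up only $\tfrac12\int_{\R^n_+}\mathcal L\,z_n$ as curvature coefficient; this is \emph{not} the paper's $\gamma$ (it misses the metric-distortion piece $\xi-\tau$), so while you still get \emph{an} upper bound, it is not the inequality \eqref{UpEnEs} as stated and it will not match the lower bound of Theorem~\ref{Lem.LEs}. Your appeal to Proposition~\ref{estimate} does not bridge this: those identities relate various $z_n$-weighted bulk integrals among themselves, but they do not turn the boundary-slab integral into $\gamma(f)+\gamma(g)+\xi-\tau$. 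To close the argument you must either (i) commit to the straightening map and track both the Jacobian and the $\mu_j,\nu_j$ residuals as the paper does, or (ii) stay with affine coordinates but then also account for the $O(\eps_j)$ failure of the Neumann condition and the metric correction in $\nabla u\cdot\nabla v$, which together restore the missing $\xi-\tau$.
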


 	let $x_j\in \pa\Om$ is the point of maximum of $(u_j,v_j)$ the solution of \ref{I1}, with limit $x_0\in\pa\Om$. Without loss of generality we can assume $x_0=0$ and the inner normal at $x_0$ to $\pOm$ directed towards positive $x_n$-axis. \\
 	
 	Then there exists neighborhood $B_{\de_1}(0)$ of $0$ in $\rn$, $\tilde{B}_{\de_2}(0)$ of $0$ in $\R^{n-1}$ and a map $\psi: \tilde{B}_{\de_2}(0)\to \R$ such that $\pOm\cap B_{\de_1}(0)$ is the graph of $\psi$. Now define a map $\Phi: \tilde{B}_{\de_2}(0)\times \R\to \rn$ by
 	\begin{align}\label{BdMap}
 		\Phi_j(y)=	\left\{\begin{aligned}
 			&y_j-y_n\f{\pa\psi}{\pa x_j}(y') &&\text{ for } j=1,2, \dots, n-1\\
 			&y_n+\psi(y') &&\text{ for } j=n
 		\end{aligned}
 		\right.
 	\end{align}	
 	Since $\grad\psi(0) = 0$, we note that $D\Phi(0)=Id$ and hence $\Phi$ is locally invertible at $0$ i.e. there exists balls $B_{\de}(0)$ , $B_{\de'}(0)$ and a map $\Psi: B_{\de'}(0)\to B_{\de}(0)$ such that $\Psi=\Phi^{-1}$ and $D\Psi(0)=D\Phi(0)^{-1}$.\\
 	
 	Now let us take $\Om_j:=\f{\Om-x_0}{\eps_j}$ and for $x\in\Om_j$ define $y=\eps_j x+x_0\in\Om$ and  $w=\Psi(y)=\Psi(\eps_j x+x_0)\in\rn$ for $y\in$ domain of $\Psi$ and $z=\f{w}{\eps_j}=\f{\Psi(y)}{\eps_j}=\f{\Psi(\eps_j x+x_0)}{\eps_j}$. Then note that $y=\Phi(\eps_j z)$\\
 	
 	let $\chi\in C^\I(\rn,\R) $ is radially symmetric, $\chi=1$ in $  B(0, \de)$ and $\chi = 0 $ in $\rn \backslash B(0, 2\de)$. For a small $\de>0$, such that $B = B_+(0,2\de)$ subset of domain of $\Phi$. Define for $x\in\Om_j \cap B_+(0,2\de)$\\
 	\be 
 	u_{j}(x) = u\Big(\f{\Psi(\eps_{j}x+x_0)}{\eps_j}\Big) \chi(\Psi(\eps_j x+x_0) \nee
 	\be
 	v_{j}(x) = v\Big(\f{\Psi(\eps_{j}x+x_0)}{\eps_j}\Big)\chi(\Psi(\eps_j x+x_0)
 	\nee
 	Where $(u,v)$ solves the limit problem \ref{RLE} over $\rn_{+}$.

 	It has been shown in appendix (A) that $u_j,v_j\in H^1(\Om_j)$ and they solves following PDE over  $\Om_j$
 	\begin{equation}
 	\label{TFE}
 	\left\{\begin{aligned}
 	&-\De u_j + c(x_0) u_j =  b(x_0) v_j^q+\mu_j(x),  \\& -\De  v_j + c(x_0) v_j  = a(x_0)u_j^p +\nu_j(x) \quad \text{ in } \Om_j\\
 	&u>0, \ v>0 \text{ in } \Om_j, \,\,\,\text{ and }\f{\pa u}{\pa\nu} = 0 = \f{\pa v}{\pa\nu}  \,\,\,\text{ on }\pOm_j
 	\end{aligned}
 	\right.
 	\end{equation}
 	
 	Where $\mu_j$ and $\nu_j$ are given in \ref{MUj} and \ref{NUj}.\\
 	
 	Let $H(p)$ denotes the mean curvature of $\pOm$ at the point $p$, then we have $\De \psi(0)=(n-1)H(0)$ and 
 	\be |D(\Phi(y))|= 1-(n-1)H(0)y_n + O(|y|^2). \ee

 	We denote the energy $I_j(u_j,v_j)$ 
 		\begin{align*}
 	=&\int_{\Om_j}\Big[\langle\grad u_j,\grad v_j\rangle + c(x_0+\eps_j x)u_{j}v_j- a(x_0+\eps_j x)F(u_j)-
 	b(x_0+\eps_j x)G(v_j)\Big]dx\\
 =& \int_{\Om_j} \Big[\f{1}{2}b(x_0)g(v_j)v_j-G(v_j)b(x_0+\eps_j x)\Big]+\f{1}{2}\Big[\mu_{j}v_j+\nu_{j}u_j\Big]\\& +
 		\Big[\f{1}{2}a(x_0)f(u_j)u_j-F(u_j)a(x_0+\eps_j x)\Big]+\Big[ u_{j}v_jc(\eps_j x+x_0)-c(x_0)u_{j}v_j \Big]\\
 	\end{align*}

 	
  Let $(u,v)$ and $(U,V)$ are solutions to the problems \ef{RLE} and \ef{CRLE} respectively. We fix some notations. In proposition \ref{bconst}, we will establish a more compact form of the boundary functions. 
       \begin{align*}  
 			& \ga = \ga(f)+\ga(g)+\xi-\tau,\quad &&\eta = \eta(f)+ \eta(g)- \Theta \\
 			&\ga(f):=a(x_0)\int_{\R^{n}_{+}}\Big(\f{1}{2}f(u)u- F(u)\Big)x_{n}dx \,\,\,
 			&&\eta(f):=\int_{\R^{n}_{+}} F(u)<a'(x_0),x>dx\\
 			& \ga(g):=b(x_0)\int_{\rn_{+}} \Big(\f{1}{2}g(v)v- G(v)\Big)x_{n}dx \,\,
 			&& \eta(g):=\int_{\rn_{+}} G(v)<b'(x_0),x>dx  \\
 			& \Theta :=\int_{\R^{n}_{+}}uv<c'(x_0),x> dx && \quad \xi:=\int_{\rn_{+}} \f{\pa u}{\pa z_k}(z) \f{\pa v}{\pa z_k}(z)z_n dz, \\
 			&\tau:=\f{1}{2}\int_{\pa\rn_+}uvd\sigma
 		\end{align*} . 
 \begin{proposition}\label{bconst}
 		\begin{align*}
 	&(i)\,\,\ga=\f{5}{n+1}\f{\La(x_0)}{\sqrt{c(x_0)}}\int_{\rn_{+}}\langle \grad U ,  \grad V\rangle z_ndz\\
 	&(ii)\,\,\eta = \f{\La(x_0)}{\sqrt{c(x_0)}}\[ \f{\pa_n a(x_0)}{a(x_0)}\int_{\R^{n}_{+}}F(U)x_n + \f{\pa_n b(x_0)}{b(x_0)}\int_{\R^{n}_{+}}G(V)  x_n -  \f{\pa_n c(x_0)}{c(x_0)}\int_{\R^{n}_{+}}UV  x_n \] 
 	\end{align*}
 \end{proposition}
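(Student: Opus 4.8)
The plan is to push each of the seven building blocks $\ga(f),\ga(g),\xi,\tau$ and $\eta(f),\eta(g),\Theta$ through the change of variables \eqref{LimChVr1}--\eqref{LimChVr}, so that each is expressed in terms of the normalized ground state $(U,V)$ of \eqref{CRLE}, and then to collapse the two combinations using the Pohozaev-type identities of Proposition \ref{estimate}. Those identities apply verbatim to $(U,V)$ because \eqref{CRLE} is \eqref{RLE} with $a(x_0)=b(x_0)=c(x_0)=1$; by Proposition \ref{unibond} we may take $p=q$; and by \cite{MR1617988,MR1785681} the functions $U,V$ (hence $u,v$) are radially symmetric about the origin of $\pa\R^n_+$, so that $F(u),G(v),uv$ are radial and $\int_{\R^n_+}(\text{radial})\,z_i\,dz=0$ for $1\le i\le n-1$.

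First I would record the scaling law. With $c_0=c(x_0)$, $C_U=\big(\tf{b(x_0)}{c_0}\big)^{\al_1}\big(\tf{a(x_0)}{c_0}\big)^{\ba_1}$ and $C_V=\big(\tf{b(x_0)}{c_0}\big)^{\al_2}\big(\tf{a(x_0)}{c_0}\big)^{\ba_2}$ one has $u(y)=C_U^{-1}U(\sqrt{c_0}\,y)$ and $v(y)=C_V^{-1}V(\sqrt{c_0}\,y)$. Substituting $z=\sqrt{c_0}\,y$ in each defining integral produces a Jacobian $c_0^{-n/2}$, a factor $c_0^{-1/2}$ from the weight $z_n$ (or $z_i$), a factor $c_0$ per pair of derivatives, and explicit powers of $C_U,C_V$. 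The one computation to do carefully is the exponent bookkeeping: from $\al_1=\tf1{pq-1},\al_2=\tf p{pq-1},\ba_1=\tf q{pq-1},\ba_2=\tf1{pq-1}$ together with $(p+1)(q+1)-(p+q+2)=pq-1$ one verifies
\be a(x_0)\,C_U^{-(p+1)}=b(x_0)\,C_V^{-(q+1)}=\La(x_0)\,c_0^{n/2},\qquad C_U^{-1}C_V^{-1}=\La(x_0)\,c_0^{(n-2)/2}, \nee
with $\La$ as in \eqref{concfn}. Feeding these into the definitions, the substitution then gives
\be \ga(f)=\tf{\La(x_0)}{\sqrt{c_0}}\!\int_{\R^n_+}\!\big(\tf12 f(U)U-F(U)\big)z_n,\qquad \ga(g)=\tf{\La(x_0)}{\sqrt{c_0}}\!\int_{\R^n_+}\!\big(\tf12 g(V)V-G(V)\big)z_n, \nee
\be \xi=\tf{\La(x_0)}{\sqrt{c_0}}\!\int_{\R^n_+}\!\tf{\pa U}{\pa z_k}\tf{\pa V}{\pa z_k}z_n\quad(1\le k\le n-1),\qquad \tau=\tf{\La(x_0)}{2\sqrt{c_0}}\!\int_{\pa\R^n_+}\!UV, \nee
\be \eta(f)=\tf{\La(x_0)}{\sqrt{c_0}}\tf{\pa_n a(x_0)}{a(x_0)}\!\int_{\R^n_+}\!F(U)z_n,\quad \eta(g)=\tf{\La(x_0)}{\sqrt{c_0}}\tf{\pa_n b(x_0)}{b(x_0)}\!\int_{\R^n_+}\!G(V)z_n,\quad \Theta=\tf{\La(x_0)}{\sqrt{c_0}}\tf{\pa_n c(x_0)}{c(x_0)}\!\int_{\R^n_+}\!UVz_n, \nee
where in the last line the tangential components $\pa_i a,\pa_i b,\pa_i c$ ($1\le i\le n-1$) are killed by the radial symmetry of $(U,V)$. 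Summing the last line yields (ii) at once.

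For (i) I would then invoke Proposition \ref{estimate} applied to $(U,V)$: part (ii) gives $\int_{\R^n_+}[(\tf12 f(U)U-F(U))+(\tf12 g(V)V-G(V))]z_n=\int_{\R^n_+}[\langle\grad U,\grad V\rangle+UV-F(U)-G(V)]z_n+\tf12\int_{\pa\R^n_+}UV$; parts (iii) and then (i) turn the bulk integral on the right into $\tf4{n+1}\int_{\R^n_+}\langle\grad U,\grad V\rangle z_n$; and part (iv) gives $\int_{\R^n_+}\tf{\pa U}{\pa z_k}\tf{\pa V}{\pa z_k}z_n=\tf1{n+1}\int_{\R^n_+}\langle\grad U,\grad V\rangle z_n$. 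Adding the four terms of $\ga$, the boundary integrals $\pm\tf12\int_{\pa\R^n_+}UV$ cancel, and
\be \ga=\ga(f)+\ga(g)+\xi-\tau=\tf{\La(x_0)}{\sqrt{c_0}}\Big(\tf4{n+1}+\tf1{n+1}\Big)\!\int_{\R^n_+}\!\langle\grad U,\grad V\rangle z_n=\tf5{n+1}\,\tf{\La(x_0)}{\sqrt{c(x_0)}}\!\int_{\R^n_+}\!\langle\grad U,\grad V\rangle z_n, \nee
which is (i). The only genuine obstacle is the exponent bookkeeping in the two scaling identities above --- checking that the powers of $a(x_0),b(x_0),c(x_0)$ and of $c_0$ collapse exactly to $\La(x_0)/\sqrt{c(x_0)}$; once that is in hand, the rest is a mechanical application of Proposition \ref{estimate} and of the radial symmetry of $(U,V)$.
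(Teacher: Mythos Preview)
Your proposal is correct and follows essentially the same approach as the paper: both use the change of variables \eqref{LimChVr1}--\eqref{LimChVr} together with the identities of Proposition~\ref{estimate} and the radial symmetry of the limit profile to collapse $\ga(f)+\ga(g)+\xi-\tau$ and $\eta(f)+\eta(g)-\Theta$. The only cosmetic difference is the order of operations --- the paper first applies Proposition~\ref{estimate} to $(u,v)$ and then rescales the single resulting integral $\tfrac{5}{n+1}\int_{\R^n_+}\langle\nabla u,\nabla v\rangle z_n\,dz$ to $(U,V)$, whereas you rescale each of the seven building blocks to $(U,V)$ first and then apply Proposition~\ref{estimate} in its unit-coefficient form; the exponent bookkeeping you flagged is indeed the only nontrivial step, and your identities $a(x_0)C_U^{-(p+1)}=b(x_0)C_V^{-(q+1)}=\La(x_0)c_0^{n/2}$ and $C_U^{-1}C_V^{-1}=\La(x_0)c_0^{(n-2)/2}$ are correct.
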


\begin{proof}

 The proof follows easily from Proposition \ref{estimate} along with change of variable  \ef{LimChVr1}-\ef{LimChVr}. 
 \begin{align*} 
  \ga& = \ga(f)+\ga(g)+\xi-\tau \\
  &=\int_{\rn_{+}}\Big[\langle\grad u,\grad v\rangle + c(x_0) uv -a(x_0)F(u) -
  b(x_0)G(v) \Big]z_n  + \f{1}{2}\int_{\pa\rn_+}uvd\sigma+\xi-\tau\\
  &=2\int_{\rn_{+}}\f{\pa u}{\pa z_n}  \f{\pa v}{\pa z_n} z_n dz+ \f{1}{2}\int_{\pa\rn_+}uvd\sigma+\xi-\tau\\
  &=\f{4}{n+1}\int_{\rn_{+}}\langle \grad u ,  \grad v\rangle z_n + \f{1}{2}\int_{\pa\rn_+}uvd\sigma+\f{1}{n+1}\int_{\rn_{+}}\langle \grad u ,  \grad v\rangle z_n dz-\f{1}{2}\int_{\pa\rn_+}uvd\sigma\\
  &=\f{5}{n+1}\int_{\rn_{+}}\langle \grad u ,  \grad v\rangle z_ndz\\
  	&=\f{5}{n+1}\f{\La(x_0)}{\sqrt{c(x_0)}}\int_{\rn_{+}}\langle \grad U ,  \grad V\rangle z_ndz
  \end{align*}

Note that, radial symmetry of $u,v$ implies that both $u(x^{\prime},x_n),v(x^{\prime},x_n)$ are even in $x^{\prime}$. Thus, under symmetry the integral can be reduced to a simpler form.
\begin{align*}
	\eta &= \eta(f)+ \eta(g)- \Theta\\
	&=\int_{\R^{n}_{+}} F(u)\langle a'(x_0),x \rangle +   G(v)\langle b'(x_0),x \rangle - uv\langle c'(x_0),x\rangle \\
	&= \pa_n a(x_0)\int_{\R^{n}_{+}}F(u)  x_n +  \pa_n b(x_0)\int_{\R^{n}_{+}}G(v)  x_n -  \pa_n c(x_0)\int_{\R^{n}_{+}}uv  x_n \\
	&=\f{\La(x_0)}{\sqrt{c(x_0)}}\[ \f{\pa_n a(x_0)}{a(x_0)}\int_{\R^{n}_{+}}F(U)x_n + \f{\pa_n b(x_0)}{b(x_0)}\int_{\R^{n}_{+}}G(V)  x_n -  \f{\pa_n c(x_0)}{c(x_0)}\int_{\R^{n}_{+}}UV  x_n \] 
	\end{align*}
\end{proof}	
 In lemma \ref{Lem.UpEs5} we prove the asymptotic expansion of the energy $I_j(u_j,v_j)$ and the proof follows from Lemma \ref{Lem.UpEs2} and Lemma \ref{Lem.UpEs3}.
 	\begin{lemma}\label{Lem.UpEs5}
 	\be
 	I_j(u_j,v_j) = I_{|x_0|}(u,v)-\eps_j[(n-1)H(x_0)\ga +\eta] + o(\eps_j)\label{UpEnEs1}\nee 
  \end{lemma}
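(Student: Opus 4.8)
The plan is to compute $I_j(u_j,v_j)$ by substituting the definition of $(u_j,v_j)$ (the cut-off, pulled-back ground state $(u,v)$ of the limit problem \eqref{RLE}) into the energy functional \eqref{enRE} and tracking all terms to order $\eps_j$. First I would change variables in the integral from $x\in\Om_j\cap B_+(0,2\de)$ to $z=\Psi(\eps_j x+x_0)/\eps_j$, so that the domain becomes (essentially) all of $\R^n_+$ up to exponentially small errors coming from the cut-off $\chi$ and the decay estimate \eqref{DcyLe}; the Jacobian of this change of variables is $|D\Phi(\eps_j z)| = 1-(n-1)H(x_0)\eps_j z_n + O(\eps_j^2|z|^2)$, which is precisely where the mean-curvature term will enter. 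This reduces the problem to expanding, in powers of $\eps_j$, an integral over $\R^n_+$ of the integrand of $I_{|x_0|}$ but with (a) the coefficients $a,b,c$ evaluated at $x_0+\eps_j x$ rather than $x_0$, (b) the curved metric coming from $\Phi$ affecting the gradient term $\langle\grad u_j,\grad v_j\rangle$, and (c) the Jacobian factor above.

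Next I would split the $\eps_j$-linear contribution into its three sources. The Taylor expansion $a(x_0+\eps_j x) = a(x_0) + \eps_j\langle a'(x_0),x\rangle + O(\eps_j^2)$ (and likewise for $b,c$) contributes exactly $-\eps_j\big[\eta(f)+\eta(g)-\Theta\big] = -\eps_j\,\eta$ against the $F(u)$, $G(v)$, $uv$ terms — here I use that the odd-in-$x'$ parts integrate to zero by the radial symmetry of $(u,v)$, leaving only the $x_n$-derivatives, matching the definition of $\eta$. The Jacobian factor $-(n-1)H(x_0)\eps_j z_n$ multiplies the full limit integrand $\langle\grad u,\grad v\rangle + c(x_0)uv - a(x_0)F(u) - b(x_0)G(v)$, and by Proposition \ref{estimate}(iii) this integrand integrated against $z_n$ equals $2\int \frac{\pa u}{\pa z_n}\frac{\pa v}{\pa z_n} z_n$; combined with the metric-distortion terms from the gradient (which produce $\xi$ and the boundary term $\tau$ via the integration-by-parts identities (i)–(iv) of Proposition \ref{estimate}), these assemble into $-\eps_j(n-1)H(x_0)\,\ga$. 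The organization of this bookkeeping is exactly what Lemma \ref{Lem.UpEs2} and Lemma \ref{Lem.UpEs3} are for, so I would invoke them: Lemma \ref{Lem.UpEs2} handles the coefficient/Jacobian expansion of the zeroth-order-looking terms and the error terms $\mu_j,\nu_j$ from \eqref{TFE}, and Lemma \ref{Lem.UpEs3} handles the gradient/metric piece; adding them gives \eqref{UpEnEs1}.

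Finally, I would collect: the zeroth-order term is $\int_{\R^n_+}[\langle\grad u,\grad v\rangle + c(x_0)uv - a(x_0)F(u) - b(x_0)G(v)]\,dz = I_{|x_0|}(u,v)$, the $\eps_j$-linear term is $-[(n-1)H(x_0)\ga+\eta]$, and everything else is $o(\eps_j)$ — the $O(\eps_j^2)$ Taylor remainders and Jacobian remainders are controlled by the exponential decay \eqref{DcyLe} which makes all moments $\int |z|^k(|u|+|v|+|\grad u||\grad v|)\,dz$ finite, and the contributions of $\mu_j,\nu_j$ and of the region $\{|z|\gtrsim 1/\eps_j\}$ cut off by $\chi$ are exponentially small. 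The main obstacle, I expect, is the gradient term: under the nonlinear change of coordinates $\Phi$, $\langle\grad u_j,\grad v_j\rangle$ picks up the pulled-back Euclidean metric $g^{kl}(\eps_j z) = \de^{kl} + \eps_j(\text{linear in }z)\cdot(\text{second derivatives of }\psi\text{ at }0) + O(\eps_j^2)$, and one must carefully integrate the first-order correction by parts, using the equation \eqref{RLE} and all four identities of Proposition \ref{estimate}, to see the mysterious combination $\ga(f)+\ga(g)+\xi-\tau$ emerge rather than some other linear combination; keeping the boundary term $\tau=\frac12\int_{\pa\R^n_+}uv\,d\sigma$ with the correct sign and coefficient through the integration by parts is the delicate point, and it is essentially the content of Lemma \ref{Lem.UpEs3}.
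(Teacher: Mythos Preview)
Your overall strategy --- change variables to $z$, expand the Jacobian and the coefficients $a,b,c$ to first order in $\eps_j$, and collect --- is viable and would eventually yield the stated expansion. However, it is not the route the paper takes, and you have mis-described what Lemmas~\ref{Lem.UpEs2} and~\ref{Lem.UpEs3} actually do.

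The paper's key simplification is to \emph{eliminate the gradient term before expanding}. Using the equations \eqref{TFE} satisfied by $(u_j,v_j)$ (multiply the first by $\tfrac12 v_j$, the second by $\tfrac12 u_j$, add, integrate by parts), one obtains the identity displayed just above Proposition~\ref{bconst}:
\[
I_j(u_j,v_j)=\int_{\Om_j}\Big[\tfrac12 a(x_0)f(u_j)u_j-a(x_0+\eps_j x)F(u_j)\Big]
+\Big[\tfrac12 b(x_0)g(v_j)v_j-b(x_0+\eps_j x)G(v_j)\Big]
+\big(c(x_0+\eps_j x)-c(x_0)\big)u_jv_j
+\tfrac12\big(\mu_j v_j+\nu_j u_j\big).
\]
No term here contains $\nabla u_j$ or $\nabla v_j$; the entire ``metric distortion'' from the change of variables is packaged into $\mu_j,\nu_j$ (computed in Appendix~A). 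Lemma~\ref{Lem.UpEs2} then expands the $a$-piece only, and Lemma~\ref{Lem.UpEs3} expands the $b$-piece, the $c$-piece, and the $\tfrac12(\mu_jv_j+\nu_ju_j)$-piece --- the latter producing $-\eps_j(n-1)H(x_0)(\xi-\tau)$ via Lemmas~\ref{APLemma4} and~\ref{APLemma5}. Summing the four pieces and recognizing the zeroth-order sum as $I_{|x_0|}(u,v)$ (again via the limit equation \eqref{RLE}) gives the result. Neither lemma handles a ``gradient/metric piece'' in your sense, because there is none.

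What the paper's approach buys is precisely the avoidance of the ``delicate point'' you flag at the end: you never have to expand the pulled-back metric $g^{kl}(\eps_j z)$ against $\partial_k u\,\partial_l v$ and integrate by parts to extract $\xi$ and $\tau$ by hand. Your direct route would require exactly that computation (tracking the $2\eps_j z_n\,\partial^2_{kl}\psi(0)$ correction to $g^{kl}$ together with the Jacobian factor on the full integrand), and while it can be made to work, it is longer and more error-prone. The PDE trick converts that analysis into the already-computed remainder terms $\mu_j,\nu_j$ of \eqref{MUj}--\eqref{NUj}.
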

 
 	
 	\begin{lemma}\label{Lem.UpEs2}
 		\begin{align*}
 			\int_{\Om_j}&\f{1}{2}a(x_0)f(u_j)u_j-F(u_j)a(x_0+\eps_j x) dx \\
 			& = \int_{\R^{n}_{+}} 	a(x_0)\Big(\f{1}{2}f(u)u-F(u)\Big) dx - \eps_{j}\Big((n-1)H(x_0)\ga(f) +\eta(f)\Big) + o(\eps_j).  
 		\end{align*}
 	\end{lemma}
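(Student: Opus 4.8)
The plan is to change variables from $x\in\Om_j\cap B_+(0,2\de)$ to $z\in\rn_+$ via $z=\Psi(\eps_j x+x_0)/\eps_j$, so that the test functions $u_j(x)=u(z)\chi(\eps_j z)$ and $v_j(x)=v(z)\chi(\eps_j z)$ become, after discarding the cut-off contributions (which are $O(e^{-\delta/\eps_j})$ by the exponential decay estimate \ef{DcyLe}), the genuine limit solution $(u,v)$. The Jacobian of this transformation is $|D\Phi(\eps_j z)|=1-(n-1)H(x_0)\,\eps_j z_n+O(\eps_j^2|z|^2)$. First I would substitute this into the integral and also Taylor-expand the coefficient $a(x_0+\eps_j x)$. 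Observe that $x=\Phi(\eps_j z)/\eps_j$, so $\eps_j x=\Phi(\eps_j z)=\eps_j z+O(\eps_j^2|z|^2)$; hence $a(x_0+\eps_j x)=a(x_0)+\eps_j\langle a'(x_0),z\rangle+O(\eps_j^2|z|^2)$. Therefore
\begin{align*}
\int_{\Om_j}\Big[\tfrac12 a(x_0)f(u_j)u_j-F(u_j)a(x_0+\eps_j x)\Big]dx
&=\int_{\rn_+}\Big[\tfrac12 a(x_0)f(u)u-F(u)\big(a(x_0)+\eps_j\langle a'(x_0),z\rangle\big)\Big]\\
&\qquad\times\big(1-(n-1)H(x_0)\eps_j z_n\big)\,dz+o(\eps_j).
\end{align*}

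Next I would expand the product and collect terms by order of $\eps_j$. The $\eps_j^0$ term is exactly $\int_{\rn_+}a(x_0)\big(\tfrac12 f(u)u-F(u)\big)dz$, the leading term in the statement. The $\eps_j^1$ terms are two: one from multiplying $\tfrac12 a(x_0)f(u)u-a(x_0)F(u)$ by $-(n-1)H(x_0)z_n$, which gives $-(n-1)H(x_0)\,a(x_0)\int_{\rn_+}\big(\tfrac12 f(u)u-F(u)\big)z_n\,dz=-(n-1)H(x_0)\,\ga(f)$ by the definition of $\ga(f)$; and one from the $-\eps_j F(u)\langle a'(x_0),z\rangle$ term, which gives $-\int_{\rn_+}F(u)\langle a'(x_0),z\rangle\,dz=-\eta(f)$ by the definition of $\eta(f)$. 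All remaining contributions — the cross term $\eps_j^2\langle a'(x_0),z\rangle z_n$, the $O(\eps_j^2|z|^2)$ error from the Jacobian, and the $O(\eps_j^2|z|^2)$ error from the Taylor expansion of $a$ — are $o(\eps_j)$ because the exponential decay of $u$ and $v$ makes all the moments $\int_{\rn_+}|z|^k\,(\text{polynomial in }u,v,\nb u,\nb v)\,dz$ finite. Assembling these three pieces yields the claimed expansion.

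The main obstacle I anticipate is bookkeeping the boundary region rather than the interior: one must verify carefully that the portion of $\Om_j$ lying outside $B_+(0,2\de)$ — where the relation $x\mapsto z$ is not given by $\Psi$, and where the cut-off $\chi$ is not identically $1$ — contributes only exponentially small terms, so that extending the $z$-integral to all of $\rn_+$ is legitimate up to $o(\eps_j)$. This rests on \ef{DcyLe} together with the decay estimates \ef{decayuv}, \ef{pointdecay}, \ef{FFFdecay} from Lemma \ref{UMPlemma}, and on checking that $\chi\equiv1$ on the support of the relevant integrand modulo exponentially small mass. A secondary (purely technical) point is justifying that the error terms $O(\eps_j^2|z|^2)$ in both the Jacobian expansion $|D\Phi|$ and the coefficient expansion are uniform in $z$ on the relevant range and integrable against the exponentially decaying weights; this is routine given the smoothness of $\psi$ and of $a$, but it is where the argument's rigor actually lives. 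The remaining algebra — matching the two $\eps_j^1$-coefficients to $\ga(f)$ and $\eta(f)$ — is immediate from the definitions collected just before Proposition \ref{bconst}.
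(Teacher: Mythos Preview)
Your proposal is correct and follows essentially the same route as the paper: Taylor-expand $a(x_0+\eps_j x)$, pass to the straightened variable $z=\Psi(\eps_j x+x_0)/\eps_j$ with Jacobian $|D\Phi(\eps_j z)|=1-(n-1)H(x_0)\eps_j z_n+O(\eps_j^2|z|^2)$, and use the exponential decay \ef{DcyLe} to discard the annular cut-off region and absorb the $O(\eps_j^2|z|^2)$ remainders. One minor correction: the estimates \ef{decayuv}, \ef{pointdecay}, \ef{FFFdecay} pertain to the actual solutions $\bar u_j,\bar v_j$ and are irrelevant here, since the test functions $u_j,v_j$ are built from the limit profile $(u,v)$ and vanish identically outside the support of $\chi$, so \ef{DcyLe} alone handles all the tail contributions.
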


 	\begin{proof}
 		For $\eps$ very small, we have
 		\begin{align*}
 			&\int_{\Om_j}\f{1}{2}a(x_0)f(u_j)u_j-F(u_j)a(x_0+\eps_j x) dx\\
 			= &  \int_{\Om_j}\f{1}{2}a(x_0)f(u_j)u_j-F(u_j)\[  a(x_0)+\eps_j< a^{\prime}(x_0),x> + o(\eps_j)\]dx\\
 			=& \int_{\Om_j}\(\[\f{1}{2}f(u_j)u_j-F(u_j)\]  a(x_0)-F(u_j)\eps_j< a^{\prime}(x_0),x> + o(\eps_j)\)dx .\\
 		\end{align*}
 		Consider the first part of above integral. Note that for the change of variable $z=\f{\Psi(\eps_j x+x_0)}{\eps_j}$ equivalent to  $x=\f{\Phi(\eps_j z)-x_0}{\eps_j}$ one has \be dx=[1-\eps_j(n-1)H(x_0)z_n+ O(|\eps_j z|^2)]dz\nee set $ K(x_0):=(n-1)H(x_0)$ and we have\\
 		\begin{align*}
 			& \int_{\Om_j}\[\f{1}{2}f(u_j(x))u_j(x)-F(u_j(x))\]dx \\ 
 			& \f{(p-1)}{2(p+1)}\int_{B_+(0,2\de/\eps_j)}\Big( u(z) \chi(\eps_j z)\Big)^{p+1} (1-\eps_j(n-1)H(x_0)z_n + O(|\eps_j z|^2))dz\\
 			=&\f{(p-1)}{2(p+1)} \int_{B_+(0,\f{\de}{\eps_j})}\Big( u (z)\Big)^{p+1} (1-(n-1)H(x_0)z_n\eps_j + O(|\eps_j z|^2))dz\,\,  \\+
 			&\f{(p-1)}{2(p+1)} \int_{B_+(0,\f{2\de}{\eps_j})\backslash B_+(0,\f{\de}{\eps_j})}\Big( u(z) \chi(\eps_j z)\Big)^{p+1} (1-(n-1)H(x_0)z_n\eps_j )dz
 		\end{align*}
 		As $(u,v)$ has exponential decay. By straight-froward calculation, we have
 		\begin{align}
 			&\int_{\Om_j}\[\f{1}{2}f(u_j(x))u_j(x)-F(u_j(x))\]dx\notag\\ 
 			=&\f{(p-1)}{2(p+1)}\int_{\R^{n}_{+}}u^{p+1} (z) [1-\eps_j(n-1)H(x_0)z_n ] dz + o(\eps_j)\label{LEes1}
 		\end{align}
 		
 		\vst
 		
 		Similarly, for the second term
 		\begin{align}
 			&   \int_{\Om_j} u^{p+1}_j(x) <a'(x_0),x>dx\notag\\
 			=&  \int_{B_+(0,2\de/\eps_j)}\Big( u(z) \chi(\eps_j z)\Big)^{p+1}(1-\eps_j K(x_0)z_n )<a'(x_0),\f{\Phi(\eps_j z)-x_0}{\eps_j}>dz\notag\\
 			=&  \int_{B_+(0,\f{\de}{\eps_j})}\Big( u(z)\Big)^{p+1}(1- K(z_0)z_n\eps_j )<a'(x_0),\eps_j z )>dz +o(\eps_j)  \notag\\
 			=&  \int_{B_+(0,\f{\de}{\eps_j})}\Big( u(z)\Big)^{p+1}(1-\eps_jK(x_0)z_n  )<a'(x_0),\eps_j x>dz + o(\eps_j)\notag\\
 			=&  \eps_j \int_{\R^{n}_{+}}\Big( u(z)\Big)^{p+1}<a'(x_0),x>dz+o(\eps_j) \label{LEes2}
 		\end{align}
 		Hence lemma follows from \ref{LEes1} and \ref{LEes2}.
 	\end{proof}


 	\begin{lemma}\label{Lem.UpEs3}
 		\begin{align*}
 			(1) &\int_{\Om_j}\Big[\f{1}{2}b(x_0) g(v_j)v_j - G(v_j)b(x_0+\eps_j x) \Big]dx \\
 			&=\int_{\R^{n}_{+}} b(x_0)\Big(\f{1}{2}g(v)v-G(v)\Big) dx - \eps_{j}\Big((n-1)H(x_0)\ga(g) +\eta(g)\Big) + o(\eps_j)  \\
 			(2) &\int_{\Om_j}\Big[ u_{j}v_jc(\eps_j x+x_0)-c(x_0)u_{j}v_j \Big]dx = \eps_j  \Theta + o(\eps_j) \\ 
 			(3) &\,\,\f{1}{2}	\int_{\Om_j}\Big[\mu_{j}v_j+\nu_{j}u_j\Big] dx = -\eps_j(n-1)H(x_0)(\xi-\tau)+ o(\eps_j)
 		\end{align*}
 		Where the constants $\xi=\int_{\rn_{+}} \f{\pa u}{\pa z_k}(z) \f{\pa v}{\pa z_k}(z)z_n dz $ for $1\le k\le n-1$ and $\tau=\f{1}{2}\int_{\pa\rn_+}uvd\sigma$
 	\end{lemma}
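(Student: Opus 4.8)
The plan is to treat the three identities as perturbative expansions of integrals over $\Om_j$, using the same change of variables $z = \Psi(\eps_j x + x_0)/\eps_j$ (equivalently $x = (\Phi(\eps_j z) - x_0)/\eps_j$) that was used in Lemma \ref{Lem.UpEs2}, together with the volume element expansion $dx = [1 - \eps_j (n-1) H(x_0) z_n + O(|\eps_j z|^2)]\,dz$ and the Taylor expansion of the coefficients $a, b, c$ at $x_0$. For $(1)$, the argument is verbatim the proof of Lemma \ref{Lem.UpEs2} with $u \rightsquigarrow v$, $f \rightsquigarrow g$, $F \rightsquigarrow G$, $a \rightsquigarrow b$, $p \rightsquigarrow q$: after substituting $u_j(x) = u(z)\chi(\eps_j z)$ (and the analogue for $v_j$), one splits the domain into $B_+(0,\de/\eps_j)$ and the cutoff annulus $B_+(0,2\de/\eps_j)\setminus B_+(0,\de/\eps_j)$, discards the annulus contribution using the exponential decay \eqref{DcyLe} of the limit solution, and reads off the $O(1)$ term as $\int_{\rn_+} b(x_0)(\frac12 g(v)v - G(v))\,dx$ and the $O(\eps_j)$ term as $-\eps_j\big((n-1)H(x_0)\ga(g) + \eta(g)\big)$ by the very definitions of $\ga(g), \eta(g)$ recorded before Proposition \ref{bconst}. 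For $(2)$, write $c(\eps_j x + x_0) - c(x_0) = \eps_j \langle c'(x_0), x\rangle + o(\eps_j)$; since the leading factor is already $O(\eps_j)$, one may replace $dx$ by $dz$ and $u_j v_j$ by $u(z)v(z)$ at no extra cost, and the identity $x = z + O(\eps_j |z|^2)$ on the support gives $\int_{\Om_j}[u_j v_j c(\eps_j x+x_0) - c(x_0)u_j v_j]\,dx = \eps_j \int_{\rn_+} uv \langle c'(x_0), z\rangle\,dz + o(\eps_j) = \eps_j \Theta + o(\eps_j)$, again the exponential decay controlling the tail and the cutoff annulus.

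The substantive point is $(3)$, the remainder term. Here one needs the explicit expressions \eqref{MUj}, \eqref{NUj} for $\mu_j, \nu_j$ from Appendix A; these arise because $u_j, v_j$ were built by transplanting the half-space solution $(u,v)$ through the boundary chart $\Phi$, so $\mu_j, \nu_j$ collect the difference between the flat Laplacian and the pulled-back Laplacian, and they are themselves $O(\eps_j)$ with the leading coefficient governed by $D\Phi$, i.e. by the second derivatives of $\psi$ at $0$, hence by $\Delta\psi(0) = (n-1)H(0)$. After substituting $z$ for $x$, the product $\mu_j v_j + \nu_j u_j$ reduces at leading order to an expression quadratic in $\nabla u, \nabla v$ weighted by $z_n$, plus a boundary term on $\pa\rn_+$ coming from integrating by parts the normal-derivative correction; matching these against the definitions $\xi = \int_{\rn_+} \partial_{z_k} u\, \partial_{z_k} v\, z_n\,dz$ and $\tau = \frac12 \int_{\pa\rn_+} uv\,d\sigma$ yields $\frac12 \int_{\Om_j}[\mu_j v_j + \nu_j u_j]\,dx = -\eps_j (n-1)H(x_0)(\xi - \tau) + o(\eps_j)$.

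The main obstacle will be $(3)$: one must expand $D\Phi(\eps_j z)$ and $D\Psi$ carefully enough to isolate the coefficient of $\eps_j$ in $\mu_j, \nu_j$, keep track of the integration by parts that produces the boundary piece $\tau$ (the normal derivatives $\partial u/\partial\nu$ vanish on $\pa\rn_+$, so only the chart distortion contributes there), and verify that all the $O(|\eps_j z|^2)$ volume corrections and the cutoff-annulus contributions are genuinely $o(\eps_j)$ — which they are, uniformly, by the exponential decay \eqref{DcyLe}. Parts $(1)$ and $(2)$ are routine adaptations of Lemma \ref{Lem.UpEs2} and require no new ideas; I would present them briefly and devote the bulk of the write-up to the bookkeeping in $(3)$, with the detailed computation of $\mu_j, \nu_j$ deferred to Appendix A.
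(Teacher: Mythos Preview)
Your plan is correct and matches the paper's approach exactly: parts (1) and (2) are indeed verbatim repetitions of the Lemma~\ref{Lem.UpEs2} computation with the obvious substitutions, and for (3) the paper likewise invokes the explicit forms \eqref{MUj}, \eqref{NUj} of $\mu_j,\nu_j$ from Appendix~A, then uses Lemma~\ref{APLemma4} (the integration by parts in the tangential variables producing $\xi\,\Delta\psi(0)$) and Lemma~\ref{APLemma5} (the normal-direction integration by parts producing the boundary term $2\tau$) to read off the coefficient $-\eps_j(n-1)H(x_0)(\xi-\tau)$. Your write-up is in fact more detailed than the paper's one-line proof, and the emphasis you place on (3) and on controlling the $O(|\eps_j z|^2)$ and cutoff-annulus errors via the exponential decay \eqref{DcyLe} is exactly right.
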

 	Proof of statement $(3)$  follows from \ef{MUj} and \ef{NUj}.

 	\begin{lemma}\label{lem4.4}
 		\be \sup_{E^{-}\bigoplus\R^+(u_j,v_j)} I_j = I_j(u_j,v_j) + o(\eps_j) \nee
 	\end{lemma}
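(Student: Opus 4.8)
The plan is to reduce the supremum over the indefinite set $E^-\oplus\R^+(u_j,v_j)$ to a one–dimensional maximisation along the ray $\R^+(u_j,v_j)$, exploiting that the test pair $w_j:=(u_j,v_j)$ built just before \eqref{TFE} is an approximate critical point of $I_j$, in the quantitative sense $\|DI_j(w_j)\|_{\ml{H}^*}=O(\eps_j)$. This estimate comes out of \eqref{TFE}: testing against $(\phi,\psi)\in\ml{H}$ and integrating by parts (the Neumann condition on $\partial\Om_j$ is built into the construction), the leading contributions cancel because $w_j$ solves the constant–coefficient limit system \eqref{RLE} at $x_0$, and one is left only with the errors $\mu_j,\nu_j$ from \eqref{MUj}--\eqref{NUj} and the coefficient discrepancies $a(x_0+\eps_j x)-a(x_0)$, $b(x_0+\eps_j x)-b(x_0)$, $c(x_0+\eps_j x)-c(x_0)$, all of size $O(\eps_j|x|)$ on the support of $w_j$; combined with the exponential decay \eqref{DcyLe} of $(u,v)$ this yields $|DI_j(w_j)[(\phi,\psi)]|\le C\eps_j\|(\phi,\psi)\|$.

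Next I carry out the standard reduction over $E^-$; write $w_j=w_j^++w_j^-$ with $w_j^\pm\in E^\pm$. On $E^-$ the quadratic part of $I_j$ equals $-\int_{\Om_j}\big(|\grad\phi|^2+c(x_0+\eps_j x)\phi^2\big)$, negative definite with a uniform gap, while the nonlinear part $-\int_{\Om_j}\big(a(x_0+\eps_j x)F(\phi)+b(x_0+\eps_j x)G(-\phi)\big)$ is concave in $\phi$ and $\le 0$; hence, for each fixed $t\ge 0$, $e\mapsto I_j(tw_j+e)$ is strictly concave and coercive on $E^-$. Let $\sigma_j:E^+\to E^-$ send $\xi$ to the unique maximiser of $e\mapsto I_j(\xi+e)$ on $E^-$, and set $\mathcal{I}_j(\xi):=I_j(\xi+\sigma_j(\xi))$. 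Since translating by an element of $E^-$ is harmless, $\max_{e\in E^-}I_j(tw_j+e)=\mathcal{I}_j(tw_j^+)$, and therefore
\be \sup_{E^-\oplus\R^+w_j}I_j=\sup_{t\ge 0}\mathcal{I}_j(tw_j^+).\nee
Moreover $w_j$ already lies in the $E^-$–fibre over $w_j^+$ and $DI_j(w_j^++w_j^-)|_{E^-}=DI_j(w_j)|_{E^-}=O(\eps_j)$, so uniform concavity forces $\|\sigma_j(w_j^+)-w_j^-\|=O(\eps_j)$; a second–order Taylor expansion of $e\mapsto I_j(w_j^++e)$ at $e=w_j^-$ then gives $\mathcal{I}_j(w_j^+)=I_j(w_j)+O(\eps_j^2)$, and also $w_j^++\sigma_j(w_j^+)=w_j+O(\eps_j)$ in $\ml{H}$.

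It remains to analyse the $C^2$ scalar function $\theta_j(t):=\mathcal{I}_j(tw_j^+)$ on $[0,\infty)$. One has $\theta_j(0)=0$, $\theta_j(1)=I_j(w_j)+O(\eps_j^2)\ge c_0>0$ for $j$ large (since $I_j(w_j)$ converges to the positive number $\Lambda(x_0)I_\infty(u,v)$, cf. \eqref{enCRLE}), and $\theta_j(t)\to-\infty$ as $t\to\infty$ by the superquadraticity bounds $g(s)s\ge(p+1)G(s)$, $f(s)s=(p+1)F(s)$; hence the maximum is attained at an interior point $\bar t_j>0$ with $\theta_j'(\bar t_j)=0$. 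The envelope identity $\theta_j'(t)=DI_j\big(tw_j^++\sigma_j(tw_j^+)\big)[w_j^+]$ together with the previous paragraph gives $\theta_j'(1)=O(\eps_j)$. Moreover, by the $C^2_{loc}$ convergence of the rescaled solutions (Section 3) and the decay \eqref{DcyLe}, $\theta_j$ and $\theta_j''$ converge on compact $t$–intervals to those of the ray–function $\theta_\infty(t):=\mathcal{I}_\infty(t(U,V)^+)$ of the ground state $(U,V)$ of \eqref{CRLE} ($\mathcal{I}_\infty$ being the analogous reduction of $I_\infty$), for which $t=1$ is the unique positive critical point and a nondegenerate maximum. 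Therefore $\theta_j''\le -c<0$ near $t=1$ for $j$ large, and a compactness argument forces $\bar t_j\to 1$ (otherwise $\bar t_jw_j^++\sigma_j(\bar t_jw_j^+)$ would converge to a critical point of $\mathcal{I}_\infty$ on the ray through $(U,V)^+$ at a parameter $\ne 1$, contradicting uniqueness). Then $|\bar t_j-1|=O(\eps_j)$ and $\theta_j(\bar t_j)-\theta_j(1)=O(\eps_j^2)$, so finally
\be \sup_{E^-\oplus\R^+(u_j,v_j)}I_j=\theta_j(\bar t_j)=\mathcal{I}_j(w_j^+)+O(\eps_j^2)=I_j(u_j,v_j)+o(\eps_j).\nee

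The one genuinely delicate ingredient is the uniform–in–$j$ control of this last, scalar maximisation: it rests on $t=1$ being a \emph{nondegenerate} maximum of the ray–function $t\mapsto\mathcal{I}_\infty(t(U,V)^+)$ of the ground state of \eqref{CRLE} (equivalently, on the strict negativity of its second derivative at $t=1$), and on ruling out that maximisers of $\theta_j$ drift to a different critical parameter; both follow from the uniqueness and nondegeneracy of that ground state (recalled around \eqref{DcyLe}) together with the blow–up convergence. Everything else reduces to routine Taylor expansions once $\|DI_j(w_j)\|_{\ml{H}^*}=O(\eps_j)$ is in hand.
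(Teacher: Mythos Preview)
Your argument is essentially correct and reaches the conclusion, but it is organised differently from the paper's. The paper (following Pistoia--Ramos, Lemma~3.3 in \cite{MR2057542}) works directly with the two--parameter family
\[
\chi_j(s,t)=I_j\big(s(u_j,v_j)+t(\phi_j,-\phi_j)\big),\qquad (s,t)\in\R^+\times\R,
\]
for arbitrary $\phi_j\in H^1$, and shows that the maximum of $\chi_j$ is attained at a point $(s_j,t_j)\to(1,0)$ with $\chi_j(s_j,t_j)=\chi_j(1,0)+o(\eps_j)$, uniformly in $\phi_j$. You instead perform a full Lyapunov--Schmidt reduction over the infinite--dimensional subspace $E^-$ first, obtaining the reduced functional $\mathcal I_j$ on $E^+$, and then analyse the one--dimensional map $\theta_j(t)=\mathcal I_j(tw_j^+)$. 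Both routes rest on the same two ingredients: that $(u_j,v_j)$ is an approximate critical point of $I_j$ with defect $O(\eps_j)$ (which you derive correctly from \eqref{TFE}--\eqref{NUj} and the exponential decay), and that the Hessian of the limit functional is negative in the relevant directions so that the maximiser is forced to $(1,0)$ (resp.\ $t=1$). Your abstract reduction is cleaner and gives the sharper $O(\eps_j^2)$ remainder at the end; the paper's two--parameter approach is more hands--on and avoids setting up the map $\sigma_j$ on all of $E^+$.

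One point deserves correction. You write that the nondegeneracy and uniqueness of the ground state of \eqref{CRLE} are ``recalled around \eqref{DcyLe}''; they are not --- the paper only records radial symmetry and exponential decay there, and in fact uniqueness/nondegeneracy of the full ground state for Hamiltonian systems of this type is a delicate issue. What your argument actually needs is much weaker: that $t=1$ is the unique positive critical point of $\theta_\infty$ and that $\theta_\infty''(1)<0$. This follows directly from the structure of the problem (and, in the regime $p=q$ that the paper adopts after Proposition~\ref{unibond}, can be checked by an explicit computation: the maximiser over $E^-$ along the ray is $0$ by evenness and concavity, and the resulting scalar function $t\mapsto Bt^2-\tfrac{2C}{p+1}t^{p+1}$ has a unique, nondegenerate interior maximum). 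You should replace the appeal to full ground--state nondegeneracy by this direct verification.
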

 	\begin{proof}
 		 We will show that the same argument [\cite{MR2057542}, lemma 3.3] can be adopted for our purpose with proper care of positive coefficients. The assumption \ef{c-asum} will take care of all the issues due to coefficients in energy estimations. The proof based on the exponential decay of limit solution and integration by parts  i.e.\ef{TFE} with decay \ef{MUj}, \ef{NUj}. The whole idea is to study asymptotic behaviour of maximum points $(s_j,t_j)$ of the energy  \be \chi_j(s,t):=I_j\( s(u_j,v_j)+ t(\phi_j,-\phi_j)\).\nee
 		 Moreover we can prove $\chi_j(s_j,t_j)=\chi_j(1,0)+o(\eps_j)$

 	\end{proof}
 	\begin{proof}[\textbf{Proof of Theorem \ref{Lem.UpEs}}]
 		From the definition of least energy solution(see the way of construction), we have \\
 		\be 
 		I_j(\tilde{u_j},\tilde{v_j})\leq  \sup_{E^{-}\bigoplus\R^+(u_j,v_j)} I_j  \,\,\,\,\, 
 		\nee\\
 		Where  \be\tilde{u}_j(x):=u_{\eps_j}(\eps_jx+x_0), \ \tilde{v}_j(x):=v_{\eps_j}(\eps_jx+x_0), x\in\Om_j:=\tf{1}{\eps_j}(\Om-x_0)\nee with least energy solution  $(u_{\eps_j}, v_{\eps_j}) $ to the problem \ef{I1}.
 		It is easy to see, \\
 		\be 
 		c_{\eps}= \eps^{n} I_j(\tilde{u_j},\tilde{v_j}). 
 		\nee \\
 		Hence the theorem follows.\\
 	\end{proof}\medskip

 	\section{Lower Energy estimation} \medskip

 	\begin{theorem}\label{Lem.LEs} 
 		Let $(u_\eps,v_\eps)$ be a minimal energy solution of system \eqref{I1} with the corresponding functional \eqref{enI1}. Then we have 
 		\be 
 		c_{\eps_j}\geq \eps_j^n \Big( \Lambda(x_j)\,I_{\infty}(u,v)-\eps_j[(n-1)H(x_j)\ga +\eta] + o(\eps_j)\Big)\notag
 		\ee 
 		for $\eps$ sufficiently small.
 	\end{theorem}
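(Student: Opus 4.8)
The plan is to run the proof of Theorem~\ref{Lem.UpEs} in reverse. There the lower--dimensional energy expansion was read off a carefully constructed competitor; here it will be read off the genuine rescaled minimal energy solution, keeping the base point at $x_j$ (rather than passing to $x_0$) so that the resulting inequality is sharp at $x_j$.

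First I would record the exact scaling identity. With $\bar u_j(x)=u_{\eps_j}(\eps_j x+x_j)$, $\bar v_j(x)=v_{\eps_j}(\eps_j x+x_j)$ on $\Om_j=\eps_j^{-1}(\Om-x_j)$, the change of variables $y=\eps_jx+x_j$ gives $c_{\eps_j}=\eps_j^{n}\,I_j(\bar u_j,\bar v_j)$ with $I_j$ the functional \ef{enRE}. Since $(\bar u_j,\bar v_j)$ is a critical point, $I_j'(\bar u_j,\bar v_j)(\bar u_j,\bar v_j)=0$, obtained by testing the two equations of \ef{RSE} with $\bar v_j$ and $\bar u_j$ and adding, turns $I_j(\bar u_j,\bar v_j)$ into the balanced form
\begin{equation*}
I_j(\bar u_j,\bar v_j)=\int_{\Om_j}a(x_j+\eps_jx)\Big(\tfrac12 f(\bar u_j)\bar u_j-F(\bar u_j)\Big)+b(x_j+\eps_jx)\Big(\tfrac12 g(\bar v_j)\bar v_j-G(\bar v_j)\Big)\,dx ;
\end{equation*}
using $f(s)s=(p+1)F(s)$, $g(s)s\ge(p+1)G(s)$ and $p,q>1$ the integrand is pointwise nonnegative, so discarding its mass outside a large ball only decreases the right--hand side --- this is the mechanism of a lower bound.

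Next I would localise and pass to the limit. By Lemma~\ref{UMPlemma}, $x_j\in\pOm$ and is the common maximum point of $(\bar u_j,\bar v_j)$ for $j$ large, and by the uniform decay \ef{pointdecay}, \ef{decayuv}, \ef{FFFdecay} one fixes, for each $\delta>0$, a radius $R>0$ with the part of the integral over $\Om_j\cap\complement B(0,R)$ below $\delta$ for all large $j$. On $B_+(0,R)$ I flatten $\pOm$ near $x_j$ by the map $\Phi_j$ of \ef{BdMap} based at $x_j$, so that $dx=[1-\eps_j(n-1)H(x_j)z_n+O(|\eps_j z|^2)]\,dz$ and the gradient term acquires an $O(\eps_j|z|)$ metric perturbation; combining this with the $C^2_{loc}$ convergence (upgraded to $H^1$ and to the relevant weighted integrals by the same uniform decay) $(\bar u_j,\bar v_j)\to(u,v)$, the radial solution of \ef{RLE}, and the expansions $a(x_j+\eps_jx)=a(x_j)+\eps_j\langle\nabla a(x_j),x\rangle+O(\eps_j^2|x|^2)$ and likewise for $b,c$, the integral over $B_+(0,R)$ is computed exactly as in Lemmas~\ref{Lem.UpEs2} and~\ref{Lem.UpEs3}. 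The zeroth--order part equals $\int_{\R^n_+}a(x_j)\big(\tfrac12 f(u)u-F(u)\big)+b(x_j)\big(\tfrac12 g(v)v-G(v)\big)$, which by the normalisation \ef{LimChVr1}--\ef{LimChVr} (with base point $x_j$) is $\Lambda(x_j)\,I_\infty(U,V)$ with $(U,V)$ the ground state of \ef{CRLE} (written $I_\infty(u,v)$ in the statement); the $O(\eps_j)$ part, rewritten through Propositions~\ref{estimate} and~\ref{bconst}, collapses to $-\eps_j[(n-1)H(x_j)\ga+\eta]$, the $(\xi-\tau)$ piece of $\ga$ arising here from the metric perturbation of the gradient term plus one integration by parts, in place of the correction terms $\mu_j,\nu_j$ that carried it in the upper estimate. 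Letting $j\to\infty$ and then $R\to\infty$, $\delta\to0$ gives the claimed inequality.

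The main obstacle is promoting the $C^2_{loc}$ picture to honest convergence of the two coefficients that matter: that no energy escapes to spatial infinity, so that the zeroth--order term of $I_j(\bar u_j,\bar v_j)$ converges to $\Lambda(x_j)I_\infty(U,V)+o(1)$, and that the $\eps_j$--coefficient converges to the weighted integrals defining $\ga(f),\ga(g),\eta(f),\eta(g),\Theta,\xi,\tau$. This is exactly the role of the uniform decay estimates of Lemma~\ref{UMPlemma} together with the exponential decay \ef{DcyLe} of the limit profile; the delicacy is the order of limits, with $R=R(\delta)$ chosen before $j\to\infty$. A secondary but necessary point --- so that the constant $I_\infty(U,V)$ here is the same as in Theorem~\ref{Lem.UpEs}, making the two bounds match --- is that the Morse index bound $m(u_j,v_j)\le1$ forces the positive, finite--index limit of $(\bar u_j,\bar v_j)$ (a solution of \ef{CRLE} after normalisation) to be the radial ground state, by the classification of \cite{MR1617988,MR1785681}.
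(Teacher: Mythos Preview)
Your ``discard the tail by positivity'' mechanism does give a lower bound, but only with $o(1)$ precision, not the $o(\eps_j)$ precision the theorem requires. After dropping the tail, what remains on $B_+(0,R)$ is an integral built from $(\bar u_j,\bar v_j)$, not from the limit profile $(u,v)$; replacing one by the other via $C^2_{loc}$ convergence costs $o(1)$, since no rate of convergence is available. Thus your route yields at best $c_{\eps_j}\ge\eps_j^n\big(\Lambda(x_j)I_\infty(U,V)+o(1)\big)$, which is too weak to detect the $\eps_j$-term.

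The paper never passes to the limit in the leading-order term. It straightens the actual rescaled solution $(\tilde u_j,\tilde v_j)$ to functions $(\bar u_j,\bar v_j)$ on $\R^n_+$ and, via Lemmas~\ref{Lem.LEs1}--\ref{Lem.LEs3}, establishes the \emph{identity}
\be
I_{|x_j|}(\bar u_j,\bar v_j)=I_j(\tilde u_j,\tilde v_j)+\eps_j\big((n-1)H(x_j)\gamma+\eta\big)+o(\eps_j),
\nee
in which the limit $(u,v)$ appears only inside the $\eps_j$-coefficient, where an $o(1)$ error becomes $o(\eps_j)$ overall. The comparison with the ground-state energy is then a separate variational step, Lemma~\ref{Lem.LEs4}:
\be
I_{|x_j|}(u,v)\le I_{|x_j|}(\bar u_j,\bar v_j)+o(\eps_j).
\nee
This holds because the ground-state level of $I_{|x_j|}$ lies below $\sup_{\ml H_-\oplus\R^+(\bar u_j,\bar v_j)}I_{|x_j|}$, and that sup equals $I_{|x_j|}(\bar u_j,\bar v_j)+o(\eps_j)$ since $(\bar u_j,\bar v_j)$ is an $O(\eps_j)$-approximate critical point of $I_{|x_j|}$ --- the straightening corrections $\bar\mu_j,\bar\nu_j$ of Lemma~\ref{APLemma3B} being $O(\eps_j)$ --- so the argument of Lemma~\ref{lem4.4} applies. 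Your proposal has no substitute for this minimax comparison; the Morse-index/classification remark you add at the end only identifies the limit, it does not control the rate. The internal tension in your sketch --- invoking the balanced form (which contains no gradient term) for positivity while also appealing to ``the metric perturbation of the gradient term'' to recover the $(\xi-\tau)$ piece of $\gamma$ --- is a symptom of the same gap: in the paper's route those pieces are carried precisely by the $\bar\mu_j,\bar\nu_j$ corrections, and they are not visible from the balanced form alone.
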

 	
 	As in the previous lemma, we can take a particular coordinate system such that $x_j=0$ and the inner normal at $x_j$ to $\pOm$ directed towards the positive $x_n$ axis. And we define $\psi^j$, $\Phi^j$ and $\Psi^j$ in a similar way such that 
 	\begin{itemize}
 		\item [I.] $\grad \psi^j(0)=0$
 		\item [II.] $D\Phi^j(0)=Id$ and
 		\item [III.] $(\Psi^j)^{-1}=\Phi^j$
 	\end{itemize}
 	
 	Define, 
 	\be 
 	\bar{u}_{j}(z) = \tilde{u}_j\Big(\f{\Phi^j(\eps_{j}z)-x_j}{\eps_j}\Big) \chi(\Phi^j(\eps_j z)-x_j) \nee
 	\be
 	\bar{v}_{j}(x) =\tilde{v}_j \Big(\f{\Phi^j(\eps_j z)-x_j}{\eps_j}\Big) \chi(\Phi^j(\eps_j z)-x_j)\quad ,
 	z\in \R^{n}_+.
 	\nee
 	
 	Where $(\tilde{u},\tilde{v})$ solves the problem \ref{RSE}. Similarly as in Proposition 5.1 of \cite{MR1978382} we get positive constants $c, \theta$ such that 
 	\be \tilde{u}_j(x)\le ce^{-\theta \abs{x}} \text{ and } \tilde{v}_j(x)\le ce^{-\theta \abs{x}}\label{tuvdecay}\ee
 	
 	Now from Lemma \ref{APLemma3B} we see that $\bar{u}_j,\bar{v}_j\in H^1(\R^{n}_+)$ and they solve following PDE over  $ \R^{n}_+$  with neumann boundary condition.

 	\begin{equation}
 	\label{RE3}
 	\left\{\begin{aligned}
 	-\De \bar{u}_j(z) +c(\Phi^j(\eps_{j}z)) \tilde{u}_j(x)&=b(\Phi^j(\eps_{j}z))g(\tilde{v}_j) + \bar{\mu}_j(z) &&\text{ and } \\
 	-\De \bar{v}_j(z) +c(\Phi^j(\eps_{j}z))\bar{v}_j(x) &= a(\Phi^j(\eps_{j}z))f(\tilde{u}_j) +\bar{\nu}_j(z)&&
 	\end{aligned}
 	\right.
 	\end{equation}
 	
 	where $x=\f{\Phi^j(\eps_j z)-x_j}{\eps_j}$ and $\bar{\mu}_j(z)$ , $\bar{\nu}_j(z)$ is given in Lemma\ref{APLemma3B}.
 	
   Note that
 	
 	\begin{align*}
 		& I_{|x_j|}(\bar{u}_j,\bar{v}_j) = \int_{\rn_+}\Big[\langle\grad \bar{u}_j,\grad \bar{v}_j\rangle +c(x_j)\bar{u}_j\bar{v}_j  - a(x_j)F(\bar{u}_j) -
 		b(x_j)G(\bar{v}_j)\Big]dx\notag\\
 		=&\int_{\rn_+}b(\Phi^j(\eps_{j}x))\Big(\f{1}{2}  g(\bar{v}_j)\bar{v}_j-G(\bar{v}_j) \Big)+  a(\Phi^j(\eps_{j}x))\Big( \f{1}{2} f(\bar{u}_j)\bar{u}_j- F(\bar{u}_j)\Big)\notag\\
 		&+\f{1}{2}\Big( \bar{\mu}_j\bar{v}_j+ \bar{u}_j\bar{\nu}_j \Big) +\big(c(x_j)-c(\Phi^j(\eps_{j}z))\big) \bar{u}_j\bar{v}_j \notag\\
 		&-\big(a(x_j)-a(\Phi^j(\eps_{j}z))\big) F(\bar{u}_j)-
 		\big(b(x_j)-b(\Phi^j(\eps_{j}z))\big)G(\bar{v}_j)+o(\eps_j) \label{LEest4}
 	\end{align*}

 	\begin{lemma}\label{Lem.LEs1} 
 		\begin{align*}
 			&\int_{\rn_+}\Big[\f{1}{2}g(\bar{v}_j)\bar{v}_jb(\Phi^{j}(\eps_j z)) 
 			-b(\Phi^{j}(\eps_j z))G(\bar{v}_j)\Big]dz \\
 			& =\int_{\Om_j} \(\f{1}{2} g(\tilde{v}_j)\tilde{v}_j-G(\tilde{v}_j)\) b((\eps_j x+x_j))dx\\&+   \eps_j (n-1)H(x_j) b(x_j)\int_{\rn_{+}}\Big(\f{1}{2}g(v)v-G(v)\Big)z_n dz + o(\eps_j)\\  
 		\end{align*}
 	\end{lemma}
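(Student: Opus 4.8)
The plan is to prove this exactly as in the proof of Lemma~\ref{Lem.UpEs2}, but running the boundary change of variables in the opposite direction (passing from $z$ to $x$ rather than from $x$ to $z$), which is why the mean‑curvature term ends up with the opposite sign. Recall that $\bar v_j(z)=\tilde v_j\big((\Phi^j(\eps_j z)-x_j)/\eps_j\big)\,\chi(\Phi^j(\eps_j z)-x_j)$, so $\bar v_j$ is supported in $B_+(0,2\de/\eps_j)$, and on that set the substitution $\eps_j x=\Phi^j(\eps_j z)-x_j$ (equivalently $z=\Psi^j(\eps_j x+x_j)/\eps_j$) is a diffeomorphism onto $\Om_j\cap B_+(0,2\de/\eps_j)$. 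Under it one has $\bar v_j(z)=\tilde v_j(x)\chi(\eps_j x)$ and $b(\Phi^j(\eps_j z))=b(\eps_j x+x_j)$, while the Jacobian, by the expansion $|D\Phi^j(y)|=1-(n-1)H(x_j)y_n+O(|y|^2)$ together with $z_n=x_n+O(\eps_j|x|^2)$, is
\be
dz=\big[1+\eps_j(n-1)H(x_j)x_n+O(\eps_j^2|x|^2)\big]\,dx .\nee

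First I would substitute, obtaining for the left‑hand side
\be
\int_{\Om_j\cap B_+(0,2\de/\eps_j)}\Big(\tfrac12 g(\tilde v_j\chi(\eps_j x))\,\tilde v_j\chi(\eps_j x)-G(\tilde v_j\chi(\eps_j x))\Big)\,b(\eps_j x+x_j)\,\big[1+\eps_j(n-1)H(x_j)x_n+O(\eps_j^2|x|^2)\big]\,dx ,\nee
and then split it according to the three terms $1$, $\eps_j(n-1)H(x_j)x_n$ and $O(\eps_j^2|x|^2)$ in the bracket, using throughout the uniform exponential bound $\tilde v_j(x)\le c e^{-\theta|x|}$ of \eqref{tuvdecay}, the bound $K_1\le b\le K_2$ of \eqref{c-asum}, and the $C^2_{\mathrm{loc}}$ convergence $\tilde v_j\to v$ with $v$ the limit solution of \eqref{RLE}. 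For the term $1$: on $B_+(0,\de/\eps_j)$ the cut‑off equals $1$, while on the annulus $B_+(0,2\de/\eps_j)\setminus B_+(0,\de/\eps_j)$ and on $\Om_j\setminus B_+(0,2\de/\eps_j)$ the integrand is $O(e^{-c'/\eps_j})$ since $g(s)s,G(s)=O(|s|^{q+1})$; hence this term contributes $\int_{\Om_j}\big(\tfrac12 g(\tilde v_j)\tilde v_j-G(\tilde v_j)\big)b(\eps_j x+x_j)\,dx+o(\eps_j)$. For the term $\eps_j(n-1)H(x_j)x_n$: replacing $\chi(\eps_j x)$ by $1$ and $b(\eps_j x+x_j)$ by $b(x_j)$ (with errors $O(e^{-c'/\eps_j})$ and $O(\eps_j|x|)$ respectively) and passing to the limit against the weight $|x|e^{-(q+1)\theta|x|}$ yields $\eps_j(n-1)H(x_j)\,b(x_j)\int_{\R^{n}_{+}}\big(\tfrac12 g(v)v-G(v)\big)z_n\,dz+o(\eps_j)$. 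The term $O(\eps_j^2|x|^2)$ is bounded by $C\eps_j^2\int_{\R^{n}_{+}}|x|^2 e^{-(q+1)\theta|x|}\,dx=o(\eps_j)$. Summing the three contributions gives the asserted identity.

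The only genuinely new point compared with the scalar computations of \cite{MR1978382, MR2057542} is the presence of the weight $b$; this is harmless because \eqref{c-asum} makes it bounded and its smoothness lets us Taylor‑expand it about $x_j$, so that $|b(\eps_j x+x_j)-b(x_j)|\le C\eps_j|x|$ is absorbed into $o(\eps_j)$ against the exponential weight. The step I expect to require most care is the uniformity in $j$ of all the error estimates — in particular that the decay constants $c,\theta$ in \eqref{tuvdecay} may be chosen independent of $j$ (which is exactly what the analogue of Proposition~5.1 of \cite{MR1978382} quoted above provides), and that the $C^2_{\mathrm{loc}}$ limit $v$ is the one appearing on the right‑hand side. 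Once this is granted, the remainder is the routine bookkeeping of the change of variables, identical in shape to Lemma~\ref{Lem.UpEs2} with $\Phi^j,\Psi^j$ interchanged and the sign of the mean‑curvature term reversed.
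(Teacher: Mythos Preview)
Your proposal is correct and follows essentially the same approach as the paper: perform the boundary change of variables $\eps_j x=\Phi^j(\eps_j z)-x_j$ with Jacobian $dz=[1+\eps_j(n-1)H(x_j)x_n+O(\eps_j^2|x|^2)]\,dx$, use the exponential decay \eqref{tuvdecay} to discard the cut‑off and the annulus, and then pass to the limit in the $\eps_j$‑correction term via the $C^2_{\mathrm{loc}}$ convergence $\tilde v_j\to v$. The only cosmetic difference is that the paper, for the correction term $\eps_j(n-1)H(x_j)\int b\,\tilde v_j^{q+1}x_n$, changes variables back to $z$ (so the domain is literally $\R^n_+$) before passing to the limit, whereas you pass to the limit directly in the $x$ variable on $\Om_j$; both are valid and your version is slightly shorter.
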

 	
 	\begin{proof}
 		\begin{align*}
 			&\int_{\R^{n}_+}\Big[\f{1}{2 }b(\Phi^{j}(\eps_j z))g(\bar{v}_j)\bar{v}_j
 			-b(\Phi^{j}(\eps_j z))G(\bar{v}_j) \Big]dz\\
 			&= \Big[\f{1}{2}-\f{1}{q+1}\Big]\int_{\R^{n}_+}b(\Phi^{j}(\eps_j z))\bar{v}^{q+1}_j dz \\
 			&=\Big[\f{1}{2}-\f{1}{q+1}\Big]\int_{\R^{n}_+}b(\Phi^{j}(\eps_j z)) \tilde{v}^{q+1}_j\Big(\f{\Phi^j(\eps_j z)-x_j}{\eps_j}\Big)\chi^{q+1}(\Phi^j(\eps_j z)-x_j) dx\\
 		\end{align*}
 		Under the change of variable $\Phi^j(\eps_j z)-x_j=\eps_j x, \quad dz=(1+y_nK(x_j)+o(|y|)dx, \quad K(x_j):=(n-1)H(x_j)$ we get
 		
 		\begin{align*}
 			&= \int_{\Om_j \cap B(0,\f{2\de}{\eps_j})}b(\eps_j x+x_j)\tilde{v}^{q+1}_j(x)\chi^{q+1}(\eps_j x) [1+\eps_j x_nK(x_j)+O(|\eps_j x|^2)]dy\\ 
 			&= \int_{\Om_j \cap B(0,\f{\de}{\eps_j})} b(\eps_j x+x_j)\tilde{v}^{q+1}_j(x) [1+\eps_j x_nK(x_j)+O(|\eps_j x|^2)]dx\\
 			&+ \eps^{-n} \int_{\Om_j \cap [B(0,\f{2\de}{\eps_j})\backslash B(0,\f{\de}{\eps_j})]}b(\eps_j x+x_j)\tilde{v}^{q+1}_j(x)\chi^{q+1}(\eps_j x) [1+\eps_j x_nK(x_j) ]dx \\ 
 			&= \int_{\Om_j}b(\eps_j x+x_j)\tilde{v}^{q+1}_j(x)[1+\eps_j x_nK(x_j)+O(|\eps_j x|^2)]dx \\ 
 			&- \int_{\Om_j\cap( b(0,\f{\de}{\eps_j}))^c} b(\eps_j x+x_j)\tilde{v}^{q+1}_j(x) [1+\eps_j x_nK(x_j)+O(|\eps_j x|^2)]dx \\ 
 			&+ \int_{\Om_j\cap [b(0,\f{2\de}{\eps_j})\backslash b(0,\f{\de}{\eps_j})]}b(\eps_j x+x_j)\tilde{v}^{q+1}_j(x)\chi^{q+1}(\eps_j x) [1+\eps_j x_nK(x_j)+o(\eps_j)]dx \\
 			=& \int_{\Om_j} b( \eps_jx+x_j) \tilde{v}^{q+1}_j (x) dx+\eps_j(n-1)H(x_j) \int_{\Om_j} b( \eps_jx+x_j)\tilde{v}^{q+1}_j (x) x_n  +o(\eps_j)\\
 		\end{align*}
 		Considering the second term
 		\begin{align*}
 			& \int_{\Om_j}  b( \eps_jx+x_j)\tilde{v}^{q+1}_j (x) x_n dx \\
 			&=  \int_{\Om_j\cap b(0,\f{\de}{\eps_j})}  b( \eps_jx+x_j)\tilde{v}^{q+1}_j (x)\chi(x\eps_j )x_n dx + o(\eps_j)\\
 			&= \int_{ \R^{n}_+\cap b(0,\f{\lambda}{\eps_j})}b(\Phi^{j}(\eps_j z))\tilde{v}^{q+1}_j \Big(\f{\Phi^{j}(\eps_j z)-z_j}{\eps_j}\Big)  [\Phi^{j}_n(\eps_j z)-(z_j)_n] |D\Phi^{j}(\eps_j z)|dz  \\
 		\end{align*}
 		We shall use the following expansions:
 		\begin{align*}
 			&|D\Phi^{j}(\eps_j z)|=[1-K(x_j)z_n\eps_j + o(\eps_j)]\\
 			&b(\Phi^{j}(\eps_j x)) = b(x_j)+ \eps_j <B^{\prime}(x_j),z>+o(\eps_j)\\
 			&[\Phi^{j}_n(\eps_j z)-(x_j)_n]=z_n\eps_j+o(\eps_j).\\
 		\end{align*}
 		
 		\begin{align*}
 			&= \int_{ \R^{n}_+\cap b(0,\f{\lambda}{\eps_j})} b(x_j)\tilde{v}^{q+1}_j \Big(\f{\Phi^{j}(\eps_j z)-x_j}{\eps_j}\Big)  z_ndz + o(\eps_j)\\
 			&= \int_{ \R^{n}_+\cap b(0,\f{\lambda}{\eps_j})}b(x_j) \tilde{v}^{q+1}_j \Big(\f{\Phi^{j}(\eps_j z)-x_j}{\eps_j}\Big)\chi{(\Phi^{j}(\eps_j z)} x_ndx + o(\eps_j)\\
 			&= \int_{ \R^{n}_+ }b(x_j)v^{q+1}(x) z_ndx + o(\eps_j)
 			= \int_{ \R^{n}_+ }b(x_j)v^{q+1}(x) z_ndx + o(\eps_j)
 		\end{align*}	
 		
 	\end{proof}
 	
 	In a similar way, we have following results.
 	\begin{lemma}\label{Lem.LEs2} 
 		\begin{align*}
 			&\int_{\R^{n}_+}\Big[\f{1}{2}f(\bar{u}_j)\bar{u}_ja(\Phi^{j}(\eps_j z)) 
 			-a(\Phi^{j}(\eps_j z))F(\bar{u}_j)\Big]dz \\
 			& =\int_{\Om_j} \(\f{1}{2}-\f{1}{p+1}\)\tilde{u}^{p+1}_j a((\eps_j z+x_j))dz\\&+   \eps_j K(x_j) a(x_j)\int_{\R^{n}_{+}}  \(\f{1}{2}-\f{1}{p+1}\)u^{p+1} z_{n} dz + o(\eps_j)\\   
 		\end{align*}
 	\end{lemma}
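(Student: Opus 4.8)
The plan is to run the proof of Lemma~\ref{Lem.LEs1} essentially verbatim, with the data $(g,G,b,\tilde v_j,q)$ replaced by $(f,F,a,\tilde u_j,p)$. First I would use that $\bar u_j>0$ and $f(s)=|s|^{p-1}s$, so that $\f12 f(\bar u_j)\bar u_j-F(\bar u_j)=\(\f12-\f1{p+1}\)\bar u_j^{p+1}$, which turns the left-hand side into
\be \(\f12-\f1{p+1}\)\int_{\R^n_+}a(\Phi^j(\eps_j z))\,\bar u_j^{p+1}(z)\,dz. \nee
Inserting the definition $\bar u_j(z)=\tilde u_j\big(\f{\Phi^j(\eps_j z)-x_j}{\eps_j}\big)\chi(\Phi^j(\eps_j z)-x_j)$ and performing the change of variables $\eps_j x=\Phi^j(\eps_j z)-x_j$, for which $dz=[1+\eps_j x_n K(x_j)+O(|\eps_j x|^2)]\,dx$ with $K(x_j):=(n-1)H(x_j)$, I would reduce this to
\be \(\f12-\f1{p+1}\)\int_{\Om_j\cap B(0,2\de/\eps_j)} a(\eps_j x+x_j)\,\tilde u_j^{p+1}(x)\,\chi^{p+1}(\eps_j x)\,[1+\eps_j x_n K(x_j)+O(|\eps_j x|^2)]\,dx. \nee

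Next I would split the domain of integration into $B(0,\de/\eps_j)$, where $\chi\equiv1$, and the annular shell $B(0,2\de/\eps_j)\setminus B(0,\de/\eps_j)$. On the shell the uniform exponential bound $\tilde u_j(x)\le ce^{-\theta|x|}$ from \ef{tuvdecay} makes both the unweighted and the $x_n$-weighted contributions $o(\eps_j)$; the same bound lets me replace $\Om_j\cap B(0,\de/\eps_j)$ by $\Om_j$ up to $o(\eps_j)$ and shows $\eps_j\int_{\Om_j}|\eps_j x|^2\,\tilde u_j^{p+1}(x)\,x_n\,dx=o(\eps_j)$, so the quadratic Jacobian remainder is harmless. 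What is left, modulo $o(\eps_j)$, is
\be \(\f12-\f1{p+1}\)\Big[\int_{\Om_j} a(\eps_j x+x_j)\,\tilde u_j^{p+1}(x)\,dx + \eps_j K(x_j)\int_{\Om_j} a(\eps_j x+x_j)\,\tilde u_j^{p+1}(x)\,x_n\,dx\Big]. \nee
The first summand in the bracket is already the first term on the right-hand side of the lemma. In the second, which carries a factor $\eps_j$, I would replace $a(\eps_j x+x_j)$ by $a(x_j)$ and, using the $C^2_{loc}$-convergence $\tilde u_j\to u$ together with the uniform decay to pass to the limit against the unbounded weight $x_n$, replace $\tilde u_j^{p+1}(x)\,x_n$ by $u^{p+1}(z)\,z_n$ and extend the integration to $\R^n_+$. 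This produces $\eps_j K(x_j)\,a(x_j)\int_{\R^n_+}u^{p+1}(z)\,z_n\,dz+o(\eps_j)$, and reassembling the bracket with the prefactor $\(\f12-\f1{p+1}\)$ gives the asserted identity.

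The step I expect to be the main obstacle is, exactly as in Lemma~\ref{Lem.LEs1}, the control of the $x_n$-weighted integrals: both the convergence $\tilde u_j^{p+1}x_n\to u^{p+1}x_n$ in $L^1(\R^n_+)$ and the estimate that the annular-shell piece is $o(\eps_j)$ involve a weight that grows while the domain is unbounded, so one really needs the $j$-uniform exponential decay \ef{tuvdecay} of $\tilde u_j$ and not merely $C^2_{loc}$ convergence. The rest — the Taylor expansions of $a$, $\Phi^j$ and $|D\Phi^j|$ near $x_j$, and the bookkeeping of the cut-off $\chi$ — is routine and identical to the previous lemma.
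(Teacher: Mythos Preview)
Your proposal is correct and matches the paper exactly: the paper gives no separate proof for this lemma but simply writes ``In a similar way, we have following results'' after Lemma~\ref{Lem.LEs1}, i.e.\ it intends precisely the substitution $(g,G,b,\tilde v_j,q)\rightsquigarrow(f,F,a,\tilde u_j,p)$ that you spell out. Your identification of the only genuine technical point --- that the $x_n$-weighted convergence $\int_{\Om_j}\tilde u_j^{p+1}x_n\to\int_{\R^n_+}u^{p+1}z_n$ needs the uniform exponential decay \ef{tuvdecay} and not just $C^2_{loc}$ convergence --- is also exactly how the paper's proof of Lemma~\ref{Lem.LEs1} handles the corresponding step.
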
 
 	
 	\begin{lemma}\label{Lem.LEs3} 
 		\begin{align*}
 			(i) & \int_{\rn_+}\bar{u}_j\bar{v}_j\Big[  c(x_j)-c(\Phi^{j}(\eps_j z)) \Big]dz = -\eps_j\int_{\R^{n}_+} uv<\grad c(x_j), z>   dz+o(\eps_j)\\
 			(ii)& \int_{\rn_+}G(\bar{v}_j)\Big[  b(\Phi^{j}(\eps_j z)) - b(x_j)\Big]dz = \eps_j\int_{\rn_{+}} G(v)<b'(x_j),z>\Big]dz\\
 			(iii)&\int_{\rn_+}F(\bar{u}_j)\Big[  a(\Phi^{j}(\eps_j z)) - a(x_j)\Big]dz = \eps_j\int_{\rn_{+}} F(u)<a'(x_j),z>\Big]dz
 		\end{align*}
 	\end{lemma}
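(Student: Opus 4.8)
The plan is to establish all three identities $(i)$--$(iii)$ by one and the same two-step argument; no change of variables is needed here (unlike in Lemmas~\ref{Lem.LEs1} and~\ref{Lem.LEs2}), since the integrals are already written over $\R^n_+$ in the variable $z$ with $\bar u_j,\bar v_j$ viewed as functions of $z$. First I would Taylor-expand each coefficient along the boundary chart $\Phi^j$, and then pass to the limit $j\to\infty$ using the $C^1_{loc}$ (indeed $C^2_{loc}$) convergence $(\tilde u_j,\tilde v_j)\to(u,v)$ together with the uniform exponential decay~\ef{tuvdecay}.

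For the expansion, recall that $D\Phi^j(0)=\mathrm{Id}$ and $\Phi^j(0)=x_j$, so on the support of $z\mapsto\chi(\Phi^j(\eps_j z)-x_j)$ --- a half-ball of radius $O(\de/\eps_j)$ on which $|\eps_j z|\le 2\de$ --- one has $\Phi^j(\eps_j z)-x_j=\eps_j z+O(\eps_j^2|z|^2)$, hence for any $w\in\{a,b,c\}$
\begin{equation*}
w(\Phi^j(\eps_j z))=w(x_j)+\eps_j\langle\nabla w(x_j),z\rangle+O(\eps_j^2|z|^2),
\end{equation*}
and in particular $c(x_j)-c(\Phi^j(\eps_j z))=-\eps_j\langle\nabla c(x_j),z\rangle+O(\eps_j^2|z|^2)$, and similarly for $b(\Phi^j(\eps_j z))-b(x_j)$ and $a(\Phi^j(\eps_j z))-a(x_j)$. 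For the passage to the limit, note that $\tfrac{\Phi^j(\eps_j z)-x_j}{\eps_j}=z+O(\eps_j|z|^2)\to z$ pointwise and $\chi(\Phi^j(\eps_j z)-x_j)\to 1$, so the $C^1_{loc}$ convergence gives $\bar u_j(z)\to u(z)$, $\bar v_j(z)\to v(z)$ for a.e.\ $z\in\R^n_+$, whence $F(\bar u_j)\to F(u)$ and $G(\bar v_j)\to G(v)$ (recall that by Proposition~\ref{unibond} we may take $p=q$, so the truncation plays no role in the limit). Moreover~\ef{tuvdecay} yields, for $\eps_j$ small, $|\bar u_j(z)|+|\bar v_j(z)|\le Ce^{-\theta'|z|}$ uniformly in $j$, so each of $\bar u_j\bar v_j\langle\nabla c(x_j),z\rangle$, $G(\bar v_j)\langle\nabla b(x_j),z\rangle$, $F(\bar u_j)\langle\nabla a(x_j),z\rangle$ is dominated by a fixed integrable function on $\R^n_+$. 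Inserting the expansion into $(i)$ gives
\begin{equation*}
\int_{\R^n_+}\bar u_j\bar v_j\big[c(x_j)-c(\Phi^j(\eps_j z))\big]\,dz=-\eps_j\int_{\R^n_+}\bar u_j\bar v_j\,\langle\nabla c(x_j),z\rangle\,dz+o(\eps_j),
\end{equation*}
the remainder being $O(\eps_j^2)=o(\eps_j)$ by the exponential decay, and the dominated convergence theorem sends the first integral to $\int_{\R^n_+}uv\,\langle\nabla c(x_j),z\rangle\,dz$; this is exactly $(i)$. Identities $(ii)$ and $(iii)$ follow verbatim with $(\bar u_j\bar v_j,\nabla c)$ replaced by $(G(\bar v_j),\nabla b)$ and $(F(\bar u_j),\nabla a)$, respectively. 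The contribution of the region $\R^n_+\setminus B(0,\de/\eps_j)$ --- in particular the cutoff annulus $B(0,2\de/\eps_j)\setminus B(0,\de/\eps_j)$, where $\chi\ne 1$ --- is $o(\eps_j)$ because there $|z|\ge\de/\eps_j\to\infty$.

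The one genuinely delicate point --- \emph{the main obstacle} --- is producing the uniform, integrable exponential domination required for the dominated convergence theorem: one must bound the composition $\tilde u_j\big(\tfrac{\Phi^j(\eps_j z)-x_j}{\eps_j}\big)$ by a single $Ce^{-\theta'|z|}$ valid for all $j$, which in turn requires that $\tfrac{\Phi^j(\eps_j z)-x_j}{\eps_j}$ remain in a fixed cone about $z$ uniformly for $\eps_j$ small. This follows from $D\Phi^j(0)=\mathrm{Id}$ together with a uniform bound on $\|D^2\Phi^j\|$ on a fixed neighborhood of $0$ --- the charts $\psi^j$ being cut out of the single smooth boundary $\partial\Om$ --- combined with the fact that all the mass lies in the region $\{|\eps_j z|\le 2\de\}$ where $\chi\ne0$ and~\ef{tuvdecay} is available. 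Once this domination is secured, the Taylor remainders, the annulus correction, and the Jacobian $|D\Phi^j|$ that would enter under the substitution used in Lemmas~\ref{Lem.LEs1}--\ref{Lem.LEs2} all sit at order $o(\eps_j)$ and are absorbed.
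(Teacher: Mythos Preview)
Your argument is correct and matches the paper's intended approach: the paper does not write out a proof of this lemma but simply prefaces it with ``In a similar way, we have following results,'' pointing back to the computation in Lemma~\ref{Lem.LEs1}, whose ingredients are exactly the Taylor expansion of the coefficient, the exponential decay~\ef{tuvdecay}, and the $C^2_{loc}$ convergence $(\tilde u_j,\tilde v_j)\to(u,v)$ that you invoke. Your observation that no change of variables back to $\Om_j$ is needed here---since both sides of each identity are already integrals over $\R^n_+$ in the variable $z$---is a mild streamlining of the literal ``similar way'' template of Lemma~\ref{Lem.LEs1}, but the substance is the same.
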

 	And from lemma\ref{APLemma4B} and Lemma\ref{APLemma5B} we get:\\
 	\begin{lemma}
 		\be
 		\int_{\rn_+}\f{1}{2}\Big[\bar{\mu}_{j}\bar{v}_j+\bar{\nu}_{j}\bar{u}_j\Big] dx = \eps_jK(x_j)(\xi-\tau)+ o(\eps_j)
 		\nee 
 		where $\xi:=\int_{\rn_{+}} \f{\pa u}{\pa z_k}(z) \f{\pa v}{\pa z_k}(z)z_n dz $ for $1\le k\le n-1$ and $\tau:=\f{1}{2}\int_{\pa\rn_+}uvd\sigma$
 	\end{lemma}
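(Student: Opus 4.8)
The plan is to start from the explicit expressions for the remainders $\bar\mu_j,\bar\nu_j$ supplied by Lemma~\ref{APLemma3B} (the analogues of \eqref{MUj}--\eqref{NUj} in the present coordinates): they are the difference between $\bigl(-\De + c(\Phi^j(\eps_j z))\bigr)$ applied to the transported profiles $\bar u_j,\bar v_j$ and the corresponding nonlinear right-hand sides, and they split into a ``metric'' part, measuring the discrepancy between the Euclidean Laplacian in $x$ and the Laplacian written in the $z$-variable through $\Phi^j$, and a ``cut-off'' part supported in the annulus $B_+(0,2\de/\eps_j)\setminus B_+(0,\de/\eps_j)$. Writing $y = \eps_j z$ and using $D\Phi^j(0) = Id$ together with $|D\Phi^j(y)| = 1 - (n-1)H(x_j)y_n + O(|y|^2)$, the associated metric coefficients satisfy $g^{k\ell}(y) = \de_{k\ell} + O(|y|)$, so the metric part of $\bar\mu_j$ (resp.\ $\bar\nu_j$) equals $\eps_j$ times a first-order operator acting on $u$ (resp.\ $v$) whose coefficients are linear in $z$ and whose normal component is governed by $(n-1)H(x_j)$, up to an $O(\eps_j^2|z|^2)$ error. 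The cut-off part is exponentially small by \eqref{tuvdecay} (equivalently, by transporting the decay \eqref{DcyLe} of $(U,V)$ through \eqref{LimChVr1}--\eqref{LimChVr}).

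Next I would multiply $\bar\mu_j$ by $\bar v_j$, $\bar\nu_j$ by $\bar u_j$, add, and integrate over $\R^n_+$. The exponential decay allows me to replace $\bar u_j,\bar v_j$ by the limit profiles $u,v$, to extend the domain of integration from $B_+(0,\de/\eps_j)$ to all of $\R^n_+$, and to drop the $O(\eps_j^2)$ remainder, each step costing only $o(\eps_j)$. This reduces the claim to evaluating, at order $\eps_j$, an integral of the schematic shape $\eps_j(n-1)H(x_j)\int_{\R^n_+}(\text{bilinear in }\grad u,\grad v)\,z\,dz$ together with a half-space boundary contribution forced by the Neumann condition on $\pa\R^n_+$; this is exactly the computation carried out, for the two halves $\int\bar\mu_j\bar v_j$ and $\int\bar\nu_j\bar u_j$ separately, in Lemma~\ref{APLemma4B} and Lemma~\ref{APLemma5B}.

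To put the result in the closed form $\eps_j K(x_j)(\xi-\tau)$ I would then integrate by parts in $z$, using the limit equations $-\De u + c(x_0)u = b(x_0)g(v)$ and $-\De v + c(x_0)v = a(x_0)f(u)$ from \eqref{RLE} and the Neumann condition $\pa_n u = \pa_n v = 0$ on $\pa\R^n_+$. In the combination $\bar\mu_j\bar v_j + \bar\nu_j\bar u_j$ the potential terms $c(x_0)uv\,z_n$ and the nonlinear terms $a(x_0)F(u)z_n$, $b(x_0)G(v)z_n$ cancel against each other --- this is precisely why the Pohozaev-type identities of Proposition~\ref{estimate}(ii)--(iii) are available --- so that only the $z_n$-weighted tangential cross term in the gradients survives in the bulk, giving $\xi$, together with the boundary integral $\tau = \tfrac12\int_{\pa\R^n_+}uv\,d\si$ produced when the weight $z_n$ is differentiated. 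Assembling the pieces yields $\eps_j K(x_j)(\xi-\tau) + o(\eps_j)$; the sign is opposite to the one in Lemma~\ref{Lem.UpEs3}(3) because here the change of variable takes $\R^n_+$ into $\Om_j$ via $\Phi^j$, whereas there it takes $\Om_j$ into $\R^n_+$ via $\Psi$.

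The \emph{main obstacle} is the bookkeeping inside $\bar\mu_j$ and $\bar\nu_j$: isolating precisely the pieces that contribute at order $\eps_j$ (rather than $O(\eps_j^2)$ or exponentially small), and then organizing the integration by parts so that the raw $z$-weighted bilinear-gradient integral collapses to the clean combination $\xi-\tau$. The cancellation of the potential and nonlinear contributions in the sum $\bar\mu_j\bar v_j + \bar\nu_j\bar u_j$ is the delicate point, and it rests entirely on Proposition~\ref{estimate}.
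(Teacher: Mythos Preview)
Your overall strategy is correct and matches the paper: start from the explicit formulas for $\bar\mu_j,\bar\nu_j$ in Lemma~\ref{APLemma3B}, multiply by $\bar v_j,\bar u_j$, integrate, and invoke Lemmas~\ref{APLemma4B} and~\ref{APLemma5B}. However, your third paragraph introduces machinery that is neither needed nor present in the paper's argument, and it rests on a misreading of the structure of $\bar\mu_j$. You describe the metric part of $\bar\mu_j$ as ``$\eps_j$ times a first-order operator acting on $u$''; in fact, by \eqref{MUjB}, it is
\[
\bar\mu_j(z)=2\eps_j\sum_{k,l=1}^{n-1}\f{\pa^2\tilde u}{\pa x_k\pa x_l}\,\f{\pa^2\psi}{\pa z_k\pa z_l}(0)\,z_n-\eps_j\,\f{\pa\tilde u}{\pa x_n}\,\De\psi(0)+O(\eps_j^2|z|^2e^{-\theta|z|}),
\]
a \emph{second}-order expression in $\tilde u$ with only tangential Hessian entries, plus a normal first derivative. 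Because of this, no potential term $c(x_0)uv\,z_n$ and no nonlinear term $a(x_0)F(u)z_n$ or $b(x_0)G(v)z_n$ ever appears in $\int(\bar\mu_j\bar v_j+\bar\nu_j\bar u_j)$, so the cancellation you describe, and the appeal to the limit equations \eqref{RLE} and to the Pohozaev identities of Proposition~\ref{estimate}, are illusory.

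The paper's route is much shorter. For the tangential-Hessian piece, a single integration by parts in a tangential variable $z_k$ (no boundary term, since $k\le n-1$) turns $\int_{\R^n_+}\pa^2_{kl}\tilde u\;z_n\bar v_j$ into $\int_{\R^n_+}\pa_l\tilde u\;\pa_k\bar v_j\,z_n$; passing to the limit and using the radial symmetry of $(u,v)$ kills the off-diagonal terms and makes all diagonal ones equal to $\xi$, so the sum over $k,l$ yields $\xi\,\De\psi(0)$ --- this is exactly Lemma~\ref{APLemma4B}. For the normal-derivative piece, adding the two halves and integrating by parts once in $z_n$ gives the boundary term $\int_{\pa\R^n_+}uv\,d\si=2\tau$ --- this is Lemma~\ref{APLemma5B}. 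Combining with $\De\psi(0)=(n-1)H(x_j)=K(x_j)$ gives the stated result. Proposition~\ref{estimate} plays no role here; it is used later, in Proposition~\ref{bconst}, to rewrite $\ga=\ga(f)+\ga(g)+\xi-\tau$ in closed form, which is a separate issue.
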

 	 Considering all above estimations, The energy $I_{|x_j|}(\bar{u}_j,\bar{v}_j)$ reads as
 	\begin{align*}
 		& I_{|x_j|}(\bar{u}_j,\bar{v}_j) \\
 		= &I_j(\tilde{u}_j,\tilde{v}_j)+\eps_j[K(x_j)(\ga(f)+\ga(g)+\xi-\tau)+(\eta(f)+\eta(g)-\Theta)]+o(\eps_j)\\
 		=&I_j(\tilde{u}_j,\tilde{v}_j)+\eps_j[ K(x_j)\ga+\eta] +o(\eps_j).\label{Les.0}
 	\end{align*}\\
 	At the end of this section, We recall lemma 4.1 from \cite{MR2057542} to conclude the theorem.
 	\begin{lemma}\label{Lem.LEs4} 
 		\be
 		I_{|x_j|}(u,v) \leq I_{|x_j|}(\bar{u}_j,\bar{v}_j) + o(\eps_j).
 		\ee 
 	\end{lemma}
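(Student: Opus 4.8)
The plan is to follow the argument of Lemma~4.1 in \cite{MR2057542}, the non-constant weights entering only through the uniform bounds \ef{c-asum}. The point is that the inequality must be sharp at the scale $\eps_j$, so the crude $H^1(\rn_+)$ convergence $(\bar u_j,\bar v_j)\to(u,v)$ — which only yields an $o(1)$ error — is not enough; one has to exploit that $(\bar u_j,\bar v_j)$ is an $O(\eps_j)$-approximate critical point of the limit functional $I_{|x_j|}$.

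First I would recall the variational characterization of the limit ground-state level. Since $(u,v)$ is the $C^2_{loc}$-limit of the rescaled solutions $(\tilde u_j,\tilde v_j)$ and, after the change of variables \ef{LimChVr1}--\ef{LimChVr}, is a ground state of \ef{RLE}, the value $I_{|x_j|}(u,v)$ equals the least-energy critical level $c_{|x_j|}$ of \ef{RLE} on $\rn_+$, and the Benci--Rabinowitz / Nehari--Pankov structure used in Section~2 gives
\[
c_{|x_j|}\;=\;\inf_{w\in\ml{H}_+\setminus\{0\}}\ \sup_{\zeta\in\ml{H}_-\oplus\R^+w} I_{|x_j|}(\zeta),
\]
so that $\sup_{\zeta\in\ml{H}_-\oplus\R^+w} I_{|x_j|}(\zeta)\ge c_{|x_j|}$ for every nonzero $w\in\ml{H}_+$. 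Now $(\bar u_j,\bar v_j)\to(u,v)\neq(0,0)$, so for $j$ large its $\ml{H}_+$-component $w_j$ is nonzero, and $(\bar u_j,\bar v_j)=w_j+h_j$ with $h_j\in\ml{H}_-$; hence $(\bar u_j,\bar v_j)\in\ml{H}_-\oplus\R^+w_j$ and $\sup_{\zeta\in\ml{H}_-\oplus\R^+w_j} I_{|x_j|}(\zeta)\ge c_{|x_j|}=I_{|x_j|}(u,v)$. Thus the lemma reduces to the local estimate that $(\bar u_j,\bar v_j)$ is an $o(\eps_j)$-approximate maximizer on this half-plane, i.e.\ $\sup_{\zeta\in\ml{H}_-\oplus\R^+w_j} I_{|x_j|}(\zeta)= I_{|x_j|}(\bar u_j,\bar v_j)+o(\eps_j)$.

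For the last estimate I would carry out the standard finite-dimensional reduction. Writing $w_j=(\phi_j,\phi_j)$ and a generic element of $\ml{H}_-$ as $(\eta,-\eta)$, the map $\eta\mapsto I_{|x_j|}\big(s\,w_j+(\eta,-\eta)\big)$ is, for fixed $s\ge0$, strictly concave and coercive from above (the quadratic part is negative definite on $\ml{H}_-$ and $-F,-G$ are concave along these directions since $p,q>1$), hence its supremum is attained at a unique $\eta_j(s)$, smooth in $s$; consequently $\sup_{\ml{H}_-\oplus\R^+w_j}I_{|x_j|}=\max_{s\ge0}\alpha_j(s)$ with $\alpha_j(s):=I_{|x_j|}\big(s\,w_j+(\eta_j(s),-\eta_j(s))\big)$. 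Because $(\bar u_j,\bar v_j)$ solves \ef{RE3} with cut-off error terms $\bar\mu_j,\bar\nu_j$ that are $O(\eps_j)$ in the relevant dual norm and supported in the annulus $\{\de/\eps_j\le|z|\le2\de/\eps_j\}$, on which $\tilde u_j,\tilde v_j$ (and $u,v$) are exponentially small by \ef{tuvdecay}, one checks, as in \cite{MR2057542} via Lemma~\ref{APLemma4B} and Lemma~\ref{APLemma5B}, that $\|h_j-(\eta_j(1),-\eta_j(1))\|_{H^1}=o(\eps_j)$, that $\alpha_j'(1)=\langle DI_{|x_j|}(\bar u_j,\bar v_j),w_j\rangle+o(\eps_j)=O(\eps_j)$, and that $\alpha_j$ attains its maximum at some $s_j$ with $\alpha_j''(s_j)$ bounded away from $0$ uniformly in $j$ (this uses non-degeneracy, modulo translations, of the radial ground state of \ef{CRLE}). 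Hence $|s_j-1|=O(\eps_j)$, $\alpha_j(s_j)-\alpha_j(1)=O(\eps_j^2)=o(\eps_j)$, and $\alpha_j(1)=I_{|x_j|}(\bar u_j,\bar v_j)+o(\eps_j)$, which is the claimed equality.

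Combining the displays gives $I_{|x_j|}(\bar u_j,\bar v_j)\ge I_{|x_j|}(u,v)-o(\eps_j)$, i.e.\ Lemma~\ref{Lem.LEs4}. Plugging this into the expansion of $I_{|x_j|}(\bar u_j,\bar v_j)$ established just before the lemma, together with $c_{\eps_j}=\eps_j^{\,n}I_j(\tilde u_j,\tilde v_j)$, the identity $I_j(\tilde u_j,\tilde v_j)=I_{|x_j|}(\bar u_j,\bar v_j)-\eps_j[(n-1)H(x_j)\ga+\eta]+o(\eps_j)$, and $I_{|x_j|}(u,v)=\Lambda(x_j)\,I_\infty(u,v)$ from \ef{enCRLE}, yields Theorem~\ref{Lem.LEs}. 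The genuinely delicate step is the control of the reduced maximizer: obtaining $\alpha_j'(1)=O(\eps_j)$ and $|s_j-1|=O(\eps_j)$ forces one to combine the exponential decay \ef{tuvdecay}, the precise form of the cut-off errors $\bar\mu_j,\bar\nu_j$ from Lemma~\ref{APLemma3B}, and the non-degeneracy of the limit ground state, with \ef{c-asum} invoked throughout so that all weight-dependent constants remain uniform in $j$; everything else is routine bookkeeping.
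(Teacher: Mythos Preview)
Your outline is essentially the paper's route: first the linking inequality $I_{|x_j|}(u,v)\le\sup_{\ml{H}_-\oplus\R^+(\bar u_j,\bar v_j)}I_{|x_j|}$, then the argument of Lemma~\ref{lem4.4} (i.e.\ \cite[Lemma~3.3]{MR2057542}) to show this sup equals $I_{|x_j|}(\bar u_j,\bar v_j)+o(\eps_j)$. The extra detail you supply on the reduced function $\alpha_j$ is welcome, but two points need correction.

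First, and most importantly, you justify the uniform bound $|\alpha_j''(s_j)|\ge c>0$ by invoking non-degeneracy (modulo translations) of the radial ground state of \ef{CRLE}. That result is not available in general for Hamiltonian systems, and neither the paper nor \cite{MR2057542} uses it. The bound you need is much cheaper: at the limit critical point one computes directly, using only $p,q>1$,
\[
D^2I_{|x_0|}(u,v)[(u,v),(u,v)]=-(p-1)\!\int a(x_0)f(u)u-(q-1)\!\int b(x_0)g(v)v<0,
\]
and $C^2_{loc}$ convergence together with the decay \ef{tuvdecay} transfers this to $\alpha_j$. Replace the non-degeneracy appeal by this computation.

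Second, two small inaccuracies. The errors $\bar\mu_j,\bar\nu_j$ in \ef{MUjB}--\ef{NUjB} are \emph{not} supported in the annulus $\{\de/\eps_j\le|z|\le2\de/\eps_j\}$; only the cut-off contribution is, while the leading geometric terms live on all of $\rn_+$ but carry an explicit $\eps_j$ factor and exponential weight. And several of your $o(\eps_j)$'s --- notably $\|h_j-(\eta_j(1),-\eta_j(1))\|_{H^1}$ and the correction in $\alpha_j'(1)$ --- are really only $O(\eps_j)$, since $DI_{|x_j|}(\bar u_j,\bar v_j)=O(\eps_j)$ (from $\bar\mu_j,\bar\nu_j$ \emph{and} the coefficient differences $a(\Phi^j(\eps_jz))-a(x_j)$ etc.). This is harmless: the quadratic gain at the maximum still yields $\alpha_j(s_j)-\alpha_j(1)=O(\eps_j^2)$ and $\alpha_j(1)-I_{|x_j|}(\bar u_j,\bar v_j)=O(\eps_j^2)$, hence $o(\eps_j)$ at the energy level as required.
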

 	\begin{proof}
 		It can be shown that, for any $e=(e_1,e_2)$ such that $e_i\in H^1(\R^n_+)$ and $e_1\ne-e_2$, \be I_{|x_j|}(u,v)\leq \sup_{\ml{H}_-\bigoplus \R^+e} I_{|x_j|}\nee
 		where $\ml{H}_- :=\{(\phi, -\phi):\phi\in H^1( \R^n_+)\}$. The proof follows by similar argument as in Lemma \ref{lem4.4} on the function $I_{|x_j|}$. \\
 	\end{proof}
 	\begin{proof}[ \textbf{Proof of Lemma\ref{Lem.LEs}}]  
 		
 		\quad   From Lemma\ref{Lem.LEs4},   
 		
 		\begin{align*}
 			& I_{|x_j|}(u,v) \leq  I_j(\tilde{u}_j,\tilde{v}_j)+\eps_j[ K(x_j)\ga+\eta] +o(\eps_j)\\ \\
 			& I_j(\tilde{u}_j,\tilde{v}_j) \geq  I_{|x_j|}(u,v)- \eps_j[ K(x_j)\ga+\eta] +o(\eps_j)\\  \\
 			& c_{\eps}\geq \eps^n [I_{|x_j|}(u,v)- \eps_j[ K(x_j)\ga+\eta] +o(\eps_j)]\\ 
 		\end{align*}
 		Hence we have lower estimate.\\
 	\end{proof}


 	\begin{proof}[\textbf{Proof of Theorem \ref{Thm1}}]
 		
 		\par \smallskip \noindent  \\ \\
 		{\it   Case (I)}: Suppose $\La$ is non constant. Since $x_j$ converges to $x_0$ we get from lemma\ref{Lem.UpEs} and lemma\ref{Lem.LEs} that the point of concentration is at the minimum point of the function $\La$.\\ \\
 		{\it   Case (II)}: If $\La$ is  constant then from the first order approximation of the energy $c_{\eps_j}$ and using the fact that $x_j\to x_0$, we get the point of concentration $x_0$, which minimizes the function $H(x)\ga(x)+\eta(x)$ in $\pOm$.
 	\end{proof}

 	\section{Applications: Concentration on higher dimensional orbits}
 	
 	In this section we shall show some applications of the previous result. 
 	
 	First let us consider the following equation: 
 		\begin{equation}
 	\label{ApE1}
 	\left\{\begin{aligned}
 	-\eps^2\De u +u=\abs{v}^{q-1}v, &\text{ and } -\eps^2\De v +v=\abs{u}^{p-1}u &&\text{in } A\\
 	u>0, \ v>0 \text{ in } \Om, &\text{ and }\f{\pa u}{\pa\nu} = 0 = \f{\pa v}{\pa\nu} &&\text{on }\partial A
 	\end{aligned}
 	\right.
 	\end{equation}
 	where $A=\{x\in\rn:a<\abs{x}<b\}$, with $0<a<b$.
 	Recall the only spheres which have group structure are $S^0, S^1, S^3, S^7, S^{15}$ (Hurwitz, 1898). And using the group structure one has the following classical Hopf fibration:
 	\begin{align*}
 		&S^0\hookrightarrow S^1\to \mathbb{RP}^1
 		&&S^1\hookrightarrow S^3\to \mathbb{CP}^1\equiv S^2\\
 		& S^3\hookrightarrow S^7\to \mathbb{HP}^1\equiv S^4
 		&&S^7\hookrightarrow S^{15}\to \mathbb{OP}^1\equiv S^8
 	\end{align*}
 	where $\mathbb{RP}, \mathbb{CP}, \mathbb{HP}, \mathbb{OP}$ are real, complex, quaternionic and octonionic projective spaces respectively. If $\pi$ is the corresponding Hopf maps in the above then $\pi$ is Harmonic Morphism, i.e. (J. C. Wood \cite{MR2250232})
 	\begin{align*}
 		&\De_{S^3}\longrightarrow \De_{S^2}\\ 
 		&\De_{S^7}\longrightarrow \De_{S^4}\\
 		&\De_{S^{15}}\longrightarrow \De_{S^8}\\
 	\end{align*} 
 	And one can easily determine the map $\pi$ reduces the system \ref{ApE1} to the following 
 	
 	\begin{equation}
 	\label{ApEC1}
 	\left\{\begin{aligned}
 	-\eps^2\De u +\f{1}{2\abs{x}}u=\f{1}{2\abs{x}}v^q, & \quad -\eps^2\De v +\f{1}{2\abs{x}}v=\f{1}{2\abs{x}}u^p \quad\text{in } \Om\\
 	u>0, \ v>0 \text{ in } \Om, &\text{ and }\quad\f{\pa u}{\pa\nu} = 0 = \f{\pa v}{\pa\nu} \qquad\text{on }\pOm
 	\end{aligned}
 	\right.
 	\end{equation}
 	where $\Om=(\f{a^2}{2}, \f{b^2}{2})\times S^{m-1}$, $m=3, 5, 9$ (for details see\cite{MR3250368}). 
 	And we have the theorem: 
 	
 		\begin{theorem}\label{ThmA1}
 		\begin{itemize} 
 			\item [(i)] for $n=4$ and $\f{1}{p+1}+\f{1}{q+1}>1/3$ the equation \ref{ApE1} has a solution which concentrates on a $S^1$ orbit in the inner boundary.
 			\item [(ii)] for $n=8$ and $\f{1}{p+1}+\f{1}{q+1}>3/5$ the equation \ref{ApE1} has a solution which concentrates on a $S^3$ orbit in the inner boundary.
 			\item [(iii)] for $n=16$ and $\f{1}{p+1}+\f{1}{q+1}>7/9$ the equation \ref{ApE1} has a solution which concentrates on a $S^7$ orbit in the inner boundary.
 		\end{itemize}
 	\end{theorem}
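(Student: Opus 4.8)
The plan is to deduce Theorem \ref{ThmA1} from Theorem \ref{Thm1} by passing through the reduced anisotropic system \eqref{ApEC1}. First I would recall the reduction: for $n=4,8,16$ the Hopf map $\pi\colon S^{n-1}\to S^{m-1}$ with $m=3,5,9$ is a harmonic morphism (so $\De_{S^{n-1}}$ descends to $\De_{S^{m-1}}$ on fibre-invariant functions), and after the radial substitution $s=\tfrac12|y|^2$ a fibre-invariant solution of \eqref{ApE1} on the annulus $A\subset\rn$ corresponds to a solution of \eqref{ApEC1} on $\Om=(\tfrac{a^2}{2},\tfrac{b^2}{2})\times S^{m-1}$ (see \cite{MR3250368}). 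By the principle of symmetric criticality this correspondence is exact at the level of critical points, so it suffices to produce a point-concentrating solution of \eqref{ApEC1}.

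The key structural feature of \eqref{ApEC1} is that the three weights coincide, $a(x)=b(x)=c(x)=\tfrac1{2|x|}$; this function is smooth and bounded between two positive constants on $\bar\Om$ since $0<a<b<\infty$, so \eqref{c-asum} holds. Then in \eqref{concfn}, with the dimension parameter now equal to $m=\dim\Om$, the factors built from $b/c$ and $a/c$ reduce to $1$, and I would compute
\begin{equation*}
\La(x)=(c(x))^{1-\frac{m}{2}}=(2|x|)^{\frac{m}{2}-1}.
\end{equation*}
Since $m\ge 3$ the exponent is positive, so $\La$ is strictly increasing in $|x|$; in particular it is not constant, and $\inf_{\pOm}\La$ is attained precisely on the inner boundary component $\{|x|=\tfrac{a^2}{2}\}\times S^{m-1}$. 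Moreover, for the $m$-dimensional domain $\Om$ the subcriticality hypothesis \eqref{rangpq} reads $\tfrac1{p+1}+\tfrac1{q+1}>\tfrac{m-2}{m}$, which for $m=3,5,9$ is exactly the condition $>\tfrac13$, $>\tfrac35$, $>\tfrac79$ in (i), (ii), (iii). Hence Theorem \ref{Thm1}(i) applies and produces, for $\eps$ small, a positive $C^2$ solution $(u_\eps,v_\eps)$ of \eqref{ApEC1} whose concentration point $P_\eps$ accumulates on $\{|x|=\tfrac{a^2}{2}\}\times S^{m-1}$.

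Finally I would lift back through $\pi$: the solution of \eqref{ApE1} associated with $(u_\eps,v_\eps)$ is fibre-invariant, hence its concentration set is the Hopf fibre over the $S^{m-1}$-coordinate of a limit point of $P_\eps$, which lies in the inner boundary sphere $\{|y|=a\}$ of $A$ and equals $S^1$, $S^3$, $S^7$ for $m=3,5,9$ respectively. This gives (i), (ii), (iii).

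The step needing genuine care is that Theorem \ref{Thm1}, together with the energy estimates of Sections 4 and 5, was carried out for bounded Euclidean domains, whereas $\Om$ is a compact Riemannian manifold with boundary and the Hopf reduction introduces, relative to the product metric, a smooth metric factor that is uniformly positive and bounded. I would dispose of this by noting that point concentration is a local phenomenon near $\pOm$: in boundary normal coordinates $\Om$ is diffeomorphic to a half-ball of $\R^m$, and the Palais-Smale analysis, the blow-up scheme of Lemma \ref{UMPlemma} and Remark \ref{remark}, and the upper and lower expansions of Theorems \ref{Lem.UpEs} and \ref{Lem.LEs} all carry over once this metric factor is absorbed into the (still admissible) weights. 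This re-derivation, though essentially mechanical, is where the bulk of the verification lies; the reduction and the application of Theorem \ref{Thm1} are comparatively short.
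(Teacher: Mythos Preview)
Your proposal is correct and follows essentially the same route as the paper: identify the reduced system \eqref{ApEC1} as an instance of \eqref{I1} with $a=b=c=\tfrac{1}{2|x|}$, compute $\La(x)=(2|x|)^{m/2-1}$, observe that it is minimized on the inner boundary, apply Theorem \ref{Thm1}(i), and lift back through the Hopf fibration. Your explicit check of the subcriticality threshold and your discussion of the Riemannian-versus-Euclidean issue for $\Om=(\tfrac{a^2}{2},\tfrac{b^2}{2})\times S^{m-1}$ go beyond what the paper writes down, but the underlying argument is the same.
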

 	
 	\begin{proof}
 		\par \smallskip \noindent  \\
 		We observe that the equation \ref{ApEC1} is an anisotropic system with coefficients  $a(x)=b(x)=c(x)=\f{1}{2\abs{x}}$ which gives $\La(x)=(2\abs{x})^{n/2-1}$ for $n=3, 5, 9$. As $\La(x)$  attains infimum on the inner boundary of the annulus we have from the main result the point of concentration for the reduced problem is on the inner boundary. And hence the corresponding solution for the original problem concentrates on the corresponding 1, 3, and 7-dimensional spheres lying on the inner boundary of $A$ respectively.
 	\end{proof}

 	Now consider the following equation with weights:
 	\begin{equation}
 	\left\{\begin{aligned}\label{appln1}
 	-\eps^2\De u +u=\abs{x}^\beta v^q, &\text{ and } -\eps^2\De v +v=\abs{x}^\al u^p &&\text{in } A\\
 	u>0, \ v>0 \text{ in } \Om, &\text{ and }\f{\pa u}{\pa\nu} = 0 = \f{\pa v}{\pa\nu} &&\text{on }\partial A
 	\end{aligned}
 	\right.
 	\end{equation}
 	where $A =\{x\in \R^4 : 0< a< |x| <b\}$, $\alpha$, $\beta$ any real number. The exponents $p,q$ satisfy $p,q> 1$ and
 	\be
 	\f{1}{p+1}+\f{1}{q+1}>\f{1}{3}\label{appln2}
 	\ee
 	
 	

 	We look for the solutions $(u_\eps, v_\eps)$ which are invariant under $S^1$ action i.e. in the space:
 	\begin{align}
 		H_{\sharp}^1(A)=\{u\in H^1(A) : u(T_\tau(z))=u(z), \forall \tau \in [0,2\pi)\}.\label{appln3}
 	\end{align}
 	where $T_\tau$ is the following fixed point free one parameter group action on $A$
 	\be T_\tau(z)=z(r,t,\theta_1+\tau,\theta_2+\tau)\label{appln4}\ee for $\tau\in[0,2\pi)$. And the co-ordinate system we consider here is $A=I\times_r \mb{S}^3$.
 	Using the $S^1$ action we get the following reduced equation (similarly as in \cite{MR2608946})
 	
 	\begin{equation}
 	\left\{\begin{aligned}\label{appln5}
 	-\eps^2\De u +\f{u}{2\abs{x}}=\f{v^q}{(2\abs{x})^{1-\f{\beta}{2}}}, &\text{ and } 
 	-\eps^2\De v +\f{v}{2\abs{x}}=\f{u^p}{(2\abs{x})^{1-\f{\al}{2}}} &&\text{in } \Om\\
 	u>0, \ v>0 \text{ in } \Om, &\text{ and }\qquad \f{\pa u}{\pa \nu} = 0 = \f{\pa v}{\pa \nu} &&\text{on }\partial \Om
 	\end{aligned}
 	\right.
 	\end{equation}
 	where $\Om=\{x \in\R^3:\f{a^2}{2}<|x|<\f{b^2}{2}\}$
 	\par \medskip
 	Note that the equation \ref{appln5} is the anisotropic problem \ref{I1} with $a(x)=(2\abs{x})^{1-\f{\al}{2}}$, $b(x)=(2\abs{x})^{1-\f{\beta}{2}}$ and $c(x)=(2\abs{x})^{-1}$. And hence we get $\La(x)=\abs{x}^{\f{pq-1-\al(q+1)-\beta(p+1)}{pq-1}}.$\\

 	Again using the Theorem\ref{Thm1} we hve :
 	\begin{theorem}\label{ThmA2}
 		Under assumptions \ref{appln2}
 		there is an $\eps_0$ such that for all $0<\eps<\eps_0$ the equation \eqref{appln1} has non constant positive 
 		solutions $u_\eps, \ v_\eps\in C^1(\bar{A})$. Moreover, both solutions concentrate for $\eps \to 0$ on a common $\mb{S}^1$-orbit $S(r_\eps)$, where $r_\eps$ denotes the radius of the circular orbit, and satisfies:
 		\begin{itemize}
 			\item [(i)] $r_\eps \to b$ \qquad \qquad  for\ \ $2\al(p+1)+2\beta(q+1)>pq-1$
 			\item [(ii)] $r_\eps \to a$ \qquad \qquad for \ \ $2\al(p+1)+2\beta(q+1)< pq-1$
 			\item [(iii)] $\f{r_\eps^2}{2} \to \abs{x_0}$ \qquad for \ \ $2\al(p+1)+2\beta(q+1)= pq-1$
 		\end{itemize}
 		where $x_0$ maximizes the function $C_1H(x)\abs{x}^{\f{1}{2}}+C_2x_n\abs{x}^{-\f{3}{2}}$ on $\pa\Om$ for some constant $C_1$ and $C_2$.
 	\end{theorem}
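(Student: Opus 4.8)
The plan is to recognize \eqref{appln5} as a special instance of the anisotropic Neumann system \eqref{I1}, to read off the conclusion from Theorem \ref{Thm1}, and then to transport it back through the Hopf reduction. The first thing I would do is work $S^1$-equivariantly: restrict the energy functional attached to \eqref{appln1} to the fixed-point-free invariant space $H^1_\sharp(A)\times H^1_\sharp(A)$ of \eqref{appln3}, so that by the principle of symmetric criticality its critical points are genuine weak solutions of \eqref{appln1}; and since the action \eqref{appln4} is free, the energy of a symmetric pair equals, up to the constant volume of the fibre, the energy of the reduced pair, so an $S^1$-invariant ground state corresponds to a ground state of the reduced problem. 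Using the coordinates $A=I\times_r\mb{S}^3$ and the harmonic-morphism property of the Hopf map $\mb{S}^3\to\mb{S}^2$, $S^1$-invariant functions on $A$ descend to functions on the three-dimensional annulus $\Om=\{x\in\rn:\f{a^2}{2}<|x|<\f{b^2}{2}\}$, and \eqref{appln1} becomes exactly \eqref{appln5}, i.e. system \eqref{I1} with $n=3$ and the radial weights $a,b,c$ recorded just before the statement (in particular $c(x)=(2|x|)^{-1}$).

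Next I would check the hypotheses and invoke Theorem \ref{Thm1}. Since $\bar\Om$ is a compact annulus bounded away from the origin, these weights are smooth and satisfy \eqref{c-asum}, while \eqref{appln2} is exactly \eqref{rangpq} for $n=3$; hence Theorem \ref{Thm1} produces $\eps_0>0$ such that for $0<\eps<\eps_0$ the reduced system \eqref{appln5} has nonconstant positive solutions concentrating, as $\eps\to 0$, at a common boundary point $x_\eps\in\pOm$, and elliptic regularity for the Neumann problem on the bounded set $A$ upgrades the lifted pair to $C^1(\bar A)$. A short computation with \eqref{concfn} shows that $\La$ is a positive constant times a power $|x|^{s}$ of $|x|$, and the three alternatives in the statement are precisely the cases $s<0$, $s>0$, $s=0$. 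In regime (i), $\La$ is strictly decreasing, so its infimum over $\pOm=\{|x|=\f{a^2}{2}\}\cup\{|x|=\f{b^2}{2}\}$ sits on the outer sphere and Theorem \ref{Thm1}(i) forces $|x_\eps|\to\f{b^2}{2}$; in regime (ii) the infimum is on the inner sphere and $|x_\eps|\to\f{a^2}{2}$. Finally, a point $x_\eps\in\pOm$ corresponds under the Hopf projection to its fibre, an orbit of \eqref{appln4} homeomorphic to $\mb{S}^1$ and lying in $\pa A$ at Euclidean radius $r_\eps=\sqrt{2|x_\eps|}$; by invariance the concentration of the reduced solution at $x_\eps$ spreads to concentration of the solution of \eqref{appln1} along $S(r_\eps)$, so $r_\eps\to b$ in case (i) and $r_\eps\to a$ in case (ii).

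In the borderline case $s=0$ the weight $\La$ is constant and Theorem \ref{Thm1}(ii) dictates the location: $x_\eps$ converges to the maximizer on $\pOm$ of $H(x)\ga(x)+\eta(x)$. Since $a,b,c$ depend only on $|x|$, we have $\ga(x)=\f{C_1}{\sqrt{c(x)}}=C_1(2|x|)^{1/2}$, and writing the normal derivative in $\eta$ radially via $\f{\pa}{\pa x_n}\log(2|x|)=x_n|x|^{-2}$ shows that $\eta(x)$ is a constant multiple of $x_n|x|^{-3/2}$; hence the limit point $x_0$ maximizes $C_1H(x)|x|^{1/2}+C_2x_n|x|^{-3/2}$ on $\pOm$, and $\f{r_\eps^2}{2}=|x_\eps|\to|x_0|$, which is (iii).

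The step I expect to be the real obstacle is the reduction itself: one must verify carefully that $S^1$-invariant critical points of the (truncated) functional on $A$ solve \eqref{appln1}, that the reduction scales energy exactly by the fibre volume so that the ground-state level is preserved, and — the genuinely delicate point — that concentration of the reduced solution at a single point $x_\eps\in\pOm$ is equivalent to concentration of the original solution along the whole $\mb{S}^1$-fibre over $x_\eps$. Once this equivariant dictionary is in place, the remainder is just the elementary monotonicity of a power of $|x|$ together with Theorem \ref{Thm1}.
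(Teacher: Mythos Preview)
Your proposal is correct and follows essentially the same route as the paper: reduce \eqref{appln1} via the Hopf map to the three-dimensional anisotropic system \eqref{appln5}, apply Theorem~\ref{Thm1} with $n=3$ and the specific radial weights, read off that $\La(x)$ is a power of $|x|$ whose sign splits the three cases, and in the borderline case compute $\gamma$ and $\eta$ to obtain the stated boundary function. The paper's proof is in fact terser than yours --- it does not spell out symmetric criticality or the ground-state correspondence under the reduction, simply citing the scalar analogue for the passage from \eqref{appln1} to \eqref{appln5} --- so the ``real obstacle'' you flag is handled in the paper only by reference, not by a more detailed argument than what you sketch.
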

 	
 	\begin{proof}
 		\par \smallskip \noindent  \\ 
 		{\it   Case (i)}: Under the assumption $2\al(p+1)+2\beta(q+1)>pq-1$ the minimum of $\La(x)$ occurs at the outer boundary of $\Om$, i.e at $r=\f{b^2}{2}$\\ 
 		{\it Case (ii)}: Under the assumption $2\al(p+1)+2\beta(q+1)<pq-1$ the minimum of $\La(x)$ occurs at the inner boundary of $\Om$, i.e at $r=\f{a^2}{2}$
 		\par \noindent \\
 		{\it   Case (iii)}: For the exponents satisfying $\al(q+1)+\beta(p+1)=pq-1$, note that $\La$ becomes constant and we go for the first order approximation to locate the concentration. Then, we get the point of concentration for the reduced problem \ef{appln5} maximizes $C_1H(x)\abs{x}^{\f{1}{2}}+C_2x_n\abs{x}^{-\f{3}{2}}$ in $\pa\Om$, where
 		\begin{align*}
 		C_1&=\f{5\sqrt{2}}{n+1}\int_{ \R^{n}_+ }<\grad U, \grad V>z_ndz\\
 		C_2&=\sqrt{2}\Big[\int_{ \R^{n}_+ }\Big[(\f{\al}{2}-1)F(U)+(\f{\beta}{2}-1)G(V)+UV\Big]z_ndz\Big].
 		\end{align*} Accordingly we get the circle of concentration as the orbit of this point.
 	\end{proof}

 	\section{Appendix A}

 	\subsection{Calculation of $\mu_j$ and $\nu_j$ :}
 	let $y=\eps_{j}x+x_0$ and $w=\Psi(y)$ and $z=\f{w}{\eps_j}$,
 	\begin{align*}
 		\f{\pa u_j(x)}{\pa x_i}=\Big[\f{\pa u}{\pa z_l}(z)\f{\pa \Psi_l}{\pa y_i}(y) \Big]\chi(w)+\eps_j u(z)\Big[\f{\pa \chi}{\pa w_l}(w)\f{\pa \Psi_l}{\pa y_i}(y)\Big]
 	\end{align*}
 	And hence
 	\begin{align*}
 		\f{\pa^2 u_j(x)}{\pa x_i^2}
 		=&\Big[\f{\pa^2 u}{\pa z_l\pa z_k}(z)\f{\pa \Psi_k}{\pa y_i}(y)\f{\pa \Psi_l}{\pa y_i}(y) +\eps_j\f{\pa u}{\pa z_l}(z)\f{\pa^2 \Psi_l}{\pa y_i^2}(y)
 		\Big]\chi(w)\\
 		&+2\eps_j\Big[\f{\pa u}{\pa z_l}(z)\f{\pa \Psi_l}{\pa y_i}(y) \Big]\Big[\f{\pa \chi}{\pa w_k}(w)\f{\pa \Psi_k}{\pa y_i}(y)\Big]\\
 		&+\eps_j^2 u(z)\Big[\f{\pa^2 \chi}{\pa w_l\pa z_k}(w)\f{\pa \Psi_k}{\pa y_i}(y)f{\pa \Psi_l}{\pa y_i}(y)+\f{\pa \chi}{\pa w_l}(w)\f{\pa^2 \Psi_l}{\pa y_i^2}(y)\Big]
 	\end{align*}
 	
 	So we get 
 	\begin{align}
 		\De u_j=& \Big[ \f{\pa^2 u}{\pa z_l\pa z_k}\f{\pa \Psi_k}{\pa y_i}\f{\pa \Psi_l}{\pa y_i}\Big]\chi(w)+\eps_j\Big[\f{\pa u}{\pa z_k}\f{\pa^{2}\Psi_k}{\pa^{2}z_i}\chi(w)+
 		2\f{\pa u}{\pa z_k}\f{\pa\Psi_k}{\pa y_i}\f{\pa\chi}{\pa w_l}\f{\pa \Psi_l}{\pa y_i}\Big]\notag\\
 		&+\eps^{2}_{j}u\Big[ \f{\pa^{2}\chi}{\pa w_k \pa w_l}\f{\pa \Psi_k}{\pa y_i}\f{\pa \Psi_l}{\pa y_i} + \f{\pa\chi}{\pa w_k}\f{\pa^{2}\Psi_k}{\pa y^{2}_i}\Big]\notag\\
 		=&(A_1+\eps_j A_2)\chi(w)+\eps_j A_3+\eps_j^2 A_4 \ (say) \label{decompuj}
 	\end{align}
 	
 	\begin{lemma}\label{APLemma1}
 		$A_1=\De u(z)+2\eps_j\sum_{k,l=1}^{n-1}\f{\pa^2 u}{\pa z_l\pa z_k} \f{\pa^2 \psi}{\pa x_k\pa x_l}(0)z_n+O(\abs{\eps_j z}^2)$
 	\end{lemma}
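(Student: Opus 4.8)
The plan is to compute $A_1 = \frac{\pa^2 u}{\pa z_l\pa z_k}\frac{\pa \Psi_k}{\pa y_i}\frac{\pa \Psi_l}{\pa y_i}$ by Taylor-expanding the Jacobian matrix of $\Psi = \Phi^{-1}$ about the origin and tracking terms up to first order in $\eps_j$. First I would recall that $y = \eps_j x + x_0$ and $z = \Psi(y)/\eps_j$, so that $|y| = O(\eps_j|z|)$ along the relevant region (where $z$ ranges over a ball of radius $O(\de/\eps_j)$), and consequently each entry $\frac{\pa \Psi_l}{\pa y_i}(y)$ may be expanded as $\frac{\pa \Psi_l}{\pa y_i}(0) + \sum_m \frac{\pa^2 \Psi_l}{\pa y_i \pa y_m}(0)\,y_m + O(|y|^2)$. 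Since $D\Phi(0) = \mathrm{Id}$ we have $D\Psi(0) = \mathrm{Id}$, i.e. $\frac{\pa \Psi_l}{\pa y_i}(0) = \de_{li}$.

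The key step is to identify the first-order coefficients $\frac{\pa^2 \Psi_l}{\pa y_i \pa y_m}(0)$ in terms of the second derivatives of $\psi$ at $0$. Differentiating the identity $\Psi(\Phi(w)) = w$ twice and using $D\Phi(0) = D\Psi(0) = \mathrm{Id}$ gives $\frac{\pa^2 \Psi_l}{\pa y_i \pa y_m}(0) = -\frac{\pa^2 \Phi_l}{\pa w_i \pa w_m}(0)$. From the explicit formula \eqref{BdMap} for $\Phi$, the only nonvanishing second derivatives of $\Phi$ at $0$ are $\frac{\pa^2 \Phi_j}{\pa w_n \pa w_k}(0) = -\frac{\pa^2 \psi}{\pa x_j \pa x_k}(0)$ for $j,k \le n-1$, and $\frac{\pa^2 \Phi_n}{\pa w_k \pa w_l}(0) = \frac{\pa^2 \psi}{\pa x_k \pa x_l}(0)$ for $k,l \le n-1$. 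Plugging the expansion of $\frac{\pa \Psi_k}{\pa y_i}$ into $A_1$, the zeroth-order term collapses to $\sum_i \frac{\pa^2 u}{\pa z_i^2}(z) = \De u(z)$, and the first-order term in $\eps_j$ collects contributions of the form $\eps_j \frac{\pa^2 u}{\pa z_l \pa z_k} \cdot (\text{Hessian of } \psi)$ contracted with $y_m/\eps_j = z_m + \dots$; because the Hessian of $\psi$ only couples indices $\le n-1$ while the surviving $y_m$ in $\Phi$'s expansion is the $y_n$-component, one is left precisely with $2\eps_j \sum_{k,l=1}^{n-1} \frac{\pa^2 u}{\pa z_l \pa z_k}\frac{\pa^2 \psi}{\pa x_k \pa x_l}(0)\, z_n$, the factor $2$ arising from the two symmetric ways the Hessian enters through the two Jacobian factors. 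Everything of order $|y|^2 = O(|\eps_j z|^2)$ is absorbed into the error term.

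The main obstacle is purely bookkeeping: carefully matching indices in the double sum $\frac{\pa^2 u}{\pa z_l\pa z_k}\frac{\pa \Psi_k}{\pa y_i}\frac{\pa \Psi_l}{\pa y_i}$ so that the cross terms produce exactly the stated coefficient (and the factor of $2$) and verifying that no first-order term survives from the $l = n$ or $k = n$ slots — this follows because $\frac{\pa \Psi_n}{\pa y_i}$ has vanishing linear part in the tangential variables paired against $\frac{\pa^2 u}{\pa z_l \pa z_n}$, and the $\frac{\pa^2\psi}{\pa x_k\pa x_l}(0)$ factors kill any index equal to $n$. Once the index analysis is done, collecting the remainder into $O(|\eps_j z|^2)$ is immediate from smoothness of $\psi$ and $u$ near the relevant compact sets.
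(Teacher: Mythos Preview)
Your approach is correct and essentially the same as the paper's: both compute the bilinear form $\sum_i \frac{\pa \Psi_k}{\pa y_i}\frac{\pa \Psi_l}{\pa y_i}$ to first order in $y$ and then contract against the Hessian of $u$. The only cosmetic difference is that the paper quotes the expansion of $D\Psi$ directly from the Ni--Takagi boundary-straightening formulas, whereas you recover the same coefficients via the inverse-function identity $D^2\Psi(0)=-D^2\Phi(0)$ together with the explicit formula \eqref{BdMap} for $\Phi$; both routes yield $\sum_i \frac{\pa \Psi_k}{\pa y_i}\frac{\pa \Psi_l}{\pa y_i}=\de_{kl}+2\eps_j\frac{\pa^2\psi}{\pa x_k\pa x_l}(0)z_n+O(|\eps_j z|^2)$ for $k,l\le n-1$, $=O(|\eps_j z|^2)$ for $k=n$ or $l=n$ with $k\ne l$, and $=1+O(|\eps_j z|^2)$ for $k=l=n$.

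One small correction to your heuristic for the mixed slots: it is \emph{not} true that $\frac{\pa\Psi_n}{\pa y_i}$ has vanishing linear part in the tangential variables. For $i,l\le n-1$ one has, to first order,
\[
\frac{\pa\Psi_n}{\pa y_l}(y)\;\approx\;-\sum_{m\le n-1}\frac{\pa^2\psi}{\pa x_l\pa x_m}(0)\,y_m,
\qquad
\frac{\pa\Psi_l}{\pa y_n}(y)\;\approx\;+\sum_{m\le n-1}\frac{\pa^2\psi}{\pa x_l\pa x_m}(0)\,y_m,
\]
and the first-order contribution to $\sum_i\frac{\pa\Psi_n}{\pa y_i}\frac{\pa\Psi_l}{\pa y_i}$ vanishes because these two pieces \emph{cancel}, not because either is zero. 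Your bookkeeping will reveal this once you actually expand, so it does not affect the validity of the argument, only the verbal justification.
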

 	
 	\begin{proof}
 		From \cite{MR1219814, MR1115095} we get:\\
 		For $1\le k,l\le n-1$
 		\begin{align*}
 			&\sum_{i=1}^{n}\f{\pa \Psi_k}{\pa y_i}(y)\f{\pa \Psi_l}{\pa y_i}(y)\\
 			&=\sum_{i=1}^{n-1}\Big(\de_{ki}+ \f{\pa^2 \psi}{\pa x_i\pa x_k}(w')w_n\Big)\Big(\de_{li}+ \f{\pa^2 \psi}{\pa x_l\pa x_i}(w')w_n\Big)+\f{\pa \psi}{\pa x_k}\f{\pa \psi}{\pa x_l}(w')+O(\abs{w}^2)\\
 			&=\de_{kl}+2 \f{\pa^2 \psi}{\pa x_k\pa x_l}(w')w_n+O(\abs{w}^2)\\
 			&=\de_{kl}+2 \f{\pa^2 \psi}{\pa x_k\pa x_l}(0)w_n+O(\abs{w}^2)
 		\end{align*}

 		Similarly	for $1\le j\le n-1$ and $k=n$, using $\grad\psi(0)=0$, we get
 		\begin{align*}
 			&\sum_{i=1}^{n}\f{\pa \Psi_n}{\pa y_i}(y)\f{\pa \Psi_l}{\pa y_i}(y)
 			=O(\abs{w}^2)
 		\end{align*}
 		
 		and for $k=l=n$ we have
 		\begin{align*}
 			&\sum_{i=1}^{n}\Big(\f{\pa \Psi_n}{\pa y_i}(y)\Big)^2
 			=\sum_{i=1}^{n-1}\Big(-\f{\pa \psi}{\pa x_i}(w)\Big)^2+1+O(\abs{w}^2)
 			=1+O(\abs{w}^2)
 		\end{align*}
 		
 		Hence from \ref{DcyLe} we get 
 		\begin{align*}
 			A_1=\De u(z)+2\eps_j\sum_{k,l=1}^{n-1}\f{\pa^2 u}{\pa z_l\pa z_k} \f{\pa^2 \psi}{\pa x_k\pa x_l}(0)z_n+O(\abs{\eps_j z}^2e^{-\de\abs{z}})
 		\end{align*}
 		
 	\end{proof}	
 	
 	Using the fact $\De \Psi_k(0)=0 \text{ and } \De \Psi_n(0)=-\De\psi(0)$ we get
 	\begin{lemma}\label{APLemma2}
 		$A_2=-\f{\pa u}{\pa z_n}(z)\De\psi(0)+O(\abs{\eps_j z}e^{-\de\abs{z}})$
 	\end{lemma}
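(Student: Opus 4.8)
The plan is to read $A_2$ off from the decomposition \eqref{decompuj} and then combine the Taylor expansions of the $\De\Psi_k$ with the exponential decay \eqref{DcyLe}. Comparing the two displayed expressions for $\De u_j$ in \eqref{decompuj}, the $O(\eps_j)$ piece that is multiplied by $\chi(w)$ and that carries only first derivatives of $u$ is $\eps_j\big[\sum_{k}\f{\pa u}{\pa z_k}(z)\,\De\Psi_k(y)\big]\chi(w)$, where the Laplacian acts in the $y$-variable (this is because $\f{\pa}{\pa x_i}\big(\f{\pa\Psi_k}{\pa y_i}(y)\big)=\eps_j\,\f{\pa^2\Psi_k}{\pa y_i^2}(y)$ since $y=\eps_j x+x_0$). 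Hence $A_2=\sum_{k=1}^{n}\f{\pa u}{\pa z_k}(z)\,\De\Psi_k(y)$, with $y=\Phi(\eps_j z)$.

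Next I would expand each $\De\Psi_k$ about the origin. Since $\Phi(0)=0$ and $D\Phi(0)=\mathrm{Id}$ we have $y=\Phi(\eps_j z)=\eps_j z+O(|\eps_j z|^2)$, so $|y|=O(|\eps_j z|)$ and therefore $\De\Psi_k(y)=\De\Psi_k(0)+O(|\eps_j z|)$. Now I invoke the identities recorded just before the statement: $\De\Psi_k(0)=0$ for $1\le k\le n-1$ and $\De\Psi_n(0)=-\De\psi(0)$ (these follow from $D^2\Psi(0)=-D^2\Phi(0)$, valid because $D\Phi(0)=\mathrm{Id}$, together with the explicit form \eqref{BdMap} of $\Phi$ and $\grad\psi(0)=0$). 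Plugging in,
\[
A_2=\f{\pa u}{\pa z_n}(z)\big(-\De\psi(0)+O(|\eps_j z|)\big)+\sum_{k=1}^{n-1}\f{\pa u}{\pa z_k}(z)\,O(|\eps_j z|).
\]

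Finally I would absorb the remainder using the decay estimate \eqref{DcyLe}: since $|\grad u(z)|\le C e^{-\de|z|}$, every term of the form $\f{\pa u}{\pa z_k}(z)\,O(|\eps_j z|)$ is $O(|\eps_j z|\,e^{-\de|z|})$, which yields $A_2=-\f{\pa u}{\pa z_n}(z)\De\psi(0)+O(|\eps_j z|\,e^{-\de|z|})$, as claimed. I do not expect a genuine obstacle here; the only point that needs a little care is that $y$ and $\eps_j z$ are not literally equal but differ at second order, which is harmless inside the $O(|\eps_j z|)$ remainder, and the substantive input (the values $\De\Psi_k(0)$) has already been established in the text.
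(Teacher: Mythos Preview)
Your proof is correct and follows exactly the paper's approach: the paper's entire argument is the single observation that $\De\Psi_k(0)=0$ for $1\le k\le n-1$ and $\De\Psi_n(0)=-\De\psi(0)$, from which the lemma follows; you have simply written out the Taylor expansion and the use of the decay estimate \eqref{DcyLe} that the paper leaves implicit.
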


 	\begin{lemma}\label{APLemma3}
 		$u_j, \ v_j$ satisfies 
 		\begin{equation}
 		\left\{\begin{aligned}
 		&-\De u_j + c(x_0) u_j =  b(x_0) v_j^q+\mu_j(x), \\&  -\De  v_j + c(x_0) v_j  = a(x_0)u_j^p +\nu_j(x) \quad\text{  in } \Om_j\\
 		&u>0, \ v>0 \text{ in } \Om_j,\,\,\,\text{ and }\,\,\,\f{\pa u}{\pa\nu} = 0 = \f{\pa v}{\pa\nu} \text{ on }\pOm_j
 		\end{aligned}
 		\right.
 		\end{equation}
 		where (with Einstein summation, $1\leq k,l\leq (n-1)$)
 		\begin{align}
 			\mu_j(x)&=-2\eps_j \f{\pa^2 u}{\pa z_l\pa z_k}(z) \f{\pa^2 \psi}{\pa x_k\pa x_l}(0)z_n+\eps_j\f{\pa u}{\pa z_n}(z)\De\psi(0)+\eps_j^2O(\abs{z}^2e^{-\de\abs{z}})\label{MUj}\\
 			\nu_j(x)&=-2\eps_j \f{\pa^2 v}{\pa z_l\pa z_k}(z) \f{\pa^2 \psi}{\pa x_k\pa x_l}(0)z_n+\eps_j\f{\pa v}{\pa z_n}(z)\De\psi(0)+\eps_j^2O(\abs{z}^2e^{-\de\abs{z}})\label{NUj}
 		\end{align}
 	\end{lemma}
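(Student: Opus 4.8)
The plan is to read the claimed system directly off the decomposition \eqref{decompuj} of $\De u_j$ established above, substituting Lemmas \ref{APLemma1} and \ref{APLemma2} for the two leading pieces $A_1$, $A_2$, and inserting the equation that the limit profile $(u,v)$ solves.

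First I would work on the set where the cut-off is inactive, i.e. where $\chi(\Psi(\eps_j x+x_0))=1$ --- in the $z$-variable this is essentially $B(0,\de/\eps_j)$. There $u_j(x)=u(z)$, $v_j(x)=v(z)$, and the terms $\eps_j A_3$, $\eps_j^2 A_4$ in \eqref{decompuj} vanish, since they carry derivatives of $\chi$. Lemma \ref{APLemma1} gives $A_1=\De u(z)+2\eps_j\sum_{k,l=1}^{n-1}\f{\pa^2 u}{\pa z_l\pa z_k}\f{\pa^2\psi}{\pa x_k\pa x_l}(0)z_n+O(|\eps_j z|^2e^{-\de|z|})$ and Lemma \ref{APLemma2} gives $A_2=-\f{\pa u}{\pa z_n}(z)\De\psi(0)+O(|\eps_j z|e^{-\de|z|})$, so $\De u_j=A_1+\eps_j A_2$ on this set. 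Since $(u,v)$ solves \eqref{RLE} on $\rn_+$, we may insert $\De u(z)=c(x_0)u(z)-b(x_0)v^q(z)$; recalling $u_j=u(z)$ and $v_j^q=v^q(z)$ here and rearranging yields precisely $-\De u_j+c(x_0)u_j=b(x_0)v_j^q+\mu_j$ with $\mu_j$ as in \eqref{MUj}. The equation for $v_j$, with $\nu_j$ as in \eqref{NUj}, follows identically from $\De v(z)=c(x_0)v(z)-a(x_0)u^p(z)$.

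Next I would dispose of the region $|\Psi(\eps_j x+x_0)|\ge\de$. Off the support of $\chi$ both sides vanish. On the transition annulus $\de\le|\Psi(\eps_j x+x_0)|\le 2\de$ one has $|z|\ge\de/\eps_j$, so by \eqref{DcyLe} the quantities $u$, $\nb u$, $D^2u$ --- hence every term produced by differentiating the product $u(z)\chi$, together with the gap between $b(x_0)v^q(z)\chi$ and $b(x_0)v_j^q=b(x_0)v^q(z)\chi^q$ --- are $O(e^{-\de|z|})$, which on $\{|z|\ge\de/\eps_j\}$ is, after slightly shrinking $\de$ if necessary, dominated by $C\eps_j^2|z|^2e^{-\de|z|}$. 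Thus the whole extra contribution there is absorbed into the remainder term of \eqref{MUj} and \eqref{NUj}, so those formulas hold throughout $\Om_j$.

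Finally, the boundary condition and regularity: the chart $\Phi$ of \eqref{BdMap} is the boundary-normal (Fermi) parametrization, so $\Psi=\Phi^{-1}$ flattens $\pOm$ near $x_0$ to $\{w_n=0\}$ and carries the inner unit normal of $\pOm$ to $\pa/\pa w_n$; after the dilation $z=w/\eps_j$, $\pOm_j$ near the origin becomes $\pa\rn_+$ with conormal $\pa/\pa z_n$. Since $u,v$ satisfy the Neumann condition on $\pa\rn_+$ and the radial $\chi$ has vanishing normal derivative on $\{w_n=0\}$, it follows that $\pa u_j/\pa\nu=\pa v_j/\pa\nu=0$ on $\pOm_j$; and $u_j,v_j\in H^1(\Om_j)$ because \eqref{DcyLe} and the compact support of $\chi$ make them smooth with compact support. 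The hard part is the bookkeeping on the transition annulus --- verifying that the mismatch introduced by the cut-off is genuinely of lower order than the $O(\eps_j)$ terms retained in $\mu_j,\nu_j$; everything else is a direct substitution into \eqref{decompuj}.
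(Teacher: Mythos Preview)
Your proposal is correct and follows exactly the route the paper takes: its proof is a one-line reference to the decomposition \eqref{decompuj} together with Lemmas \ref{APLemma1}, \ref{APLemma2}, the exponential decay of $(u,v)$, and the fact that $1-\chi$ vanishes in $B(0,\de)$. You have simply supplied the details the paper omits (the substitution of the limit equation, the bookkeeping on the transition annulus, and the boundary condition), so there is nothing to add.
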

 	
 	\begin{proof}
 		Using the fact $u$ and $v$ decays exponentially, $1-\chi$ vanishes in $B(0,\de)$ we get the result easily from \ref{decompuj}, Lemma\ref{APLemma1} and Lemma\ref{APLemma2}.
 	\end{proof}
 	
 	\begin{lemma}\label{APLemma4}
 		\begin{align*} 
 			\xi \De \psi(0)+O(\eps_j)&=\sum_{k,l=1}^{n-1}\f{\pa^2 \psi}{\pa x_k\pa x_l}(0)\int_{\Om_j} \f{\pa^2 u}{\pa z_l\pa z_k}z_nv_j(z)dx\\
 			&=\sum_{k,l=1}^{n-1}\f{\pa^2 \psi}{\pa x_k\pa x_l}(0)\int_{\Om_j} \f{\pa^2 v}{\pa z_l\pa z_k}z_nu _j(z)dx
 		\end{align*}
 		where $\xi=\int_{\rn_+} \f{\pa u}{\pa z_k}(z) \f{\pa v}{\pa z_k}(z)z_n dz$ for $1\le k\le n-1$
 	\end{lemma}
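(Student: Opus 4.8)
The plan is to reduce both integrals to integrals over the half-space $\rn_+$ against the limit profiles $(u,v)$, integrate by parts in the tangential variables, and then exploit the radial symmetry of $(u,v)$.

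First I would pass from $\Om_j$ to $\rn_+$. In the $z$-coordinate one has $u_j=u(z)\chi(\eps_j z)$ and $v_j=v(z)\chi(\eps_j z)$, and the change of variables $x\mapsto z$ sends $dx$ into $[1-\eps_j(n-1)H(x_0)z_n+O(|\eps_j z|^2)]\,dz$. Since $u,v$ and their derivatives up to order two decay exponentially (this follows from \eqref{DcyLe} via the scaling \eqref{LimChVr1}--\eqref{LimChVr}), while $1-\chi(\eps_j\,\cdot\,)$ is supported in $\{|z|\ge\de/\eps_j\}$ and the Jacobian deviates from $1$ by $O(\eps_j|z|)$, each of these corrections is absorbed into an $O(\eps_j)$ term after integration; hence $\int_{\Om_j}\f{\pa^2 u}{\pa z_l\pa z_k}(z)\,z_n\,v_j\,dx=\int_{\rn_+}\f{\pa^2 u}{\pa z_l\pa z_k}(z)\,z_n\,v(z)\,dz+O(\eps_j)$, and likewise with the roles of $u$ and $v$ interchanged.

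Next I would integrate by parts. For $1\le k\le n-1$ the coordinate $z_k$ is tangential to $\pa\rn_+$, so integrating in $z_k$ over $\R$ produces no boundary contribution — the face $\{z_n=0\}$ is never an endpoint of the $z_k$-integration, and the terms at $|z_k|\to\I$ vanish by exponential decay — and since $\pa z_n/\pa z_k=0$ this gives $\int_{\rn_+}\f{\pa^2 u}{\pa z_l\pa z_k}z_n v\,dz=-\int_{\rn_+}\f{\pa u}{\pa z_l}\f{\pa v}{\pa z_k}z_n\,dz$. Moving instead the $z_l$-derivative identifies the same quantity with $-\int_{\rn_+}\f{\pa u}{\pa z_k}\f{\pa v}{\pa z_l}z_n\,dz$ and, putting the derivative back onto $u$, with $\int_{\rn_+}\f{\pa^2 v}{\pa z_l\pa z_k}z_n u\,dz$; this already yields the asserted equality of the two sums up to $O(\eps_j)$. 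Then I would use radial symmetry: from $u(z)=u(|z|)$, $v(z)=v(|z|)$ we get $\f{\pa u}{\pa z_k}=u'(r)z_k/r$ and $\f{\pa v}{\pa z_l}=v'(r)z_l/r$, so $\int_{\rn_+}\f{\pa u}{\pa z_k}\f{\pa v}{\pa z_l}z_n\,dz=\int_{\rn_+}\f{u'(r)v'(r)}{r^2}z_kz_lz_n\,dz$, which vanishes for $k\ne l$ (oddness in $z_k$, $k,l\le n-1$) and equals $\xi$ for $k=l$, independently of $k$ by Proposition \ref{estimate}(iv). Hence the bilinear form on the tangential indices $(k,l)\mapsto\int_{\rn_+}\f{\pa^2 u}{\pa z_l\pa z_k}z_n v\,dz$ is proportional to $\de_{kl}$, and contracting against the symmetric Hessian of $\psi$, together with the fact that $\psi$ depends on $x'\in\R^{n-1}$ only, gives $\sum_{k,l=1}^{n-1}\f{\pa^2\psi}{\pa x_k\pa x_l}(0)\int_{\Om_j}\f{\pa^2 u}{\pa z_l\pa z_k}z_n v_j\,dx=\xi\sum_{k=1}^{n-1}\f{\pa^2\psi}{\pa x_k^2}(0)+O(\eps_j)=\xi\,\De\psi(0)+O(\eps_j)$, and symmetrically for the integral with $u$ and $v$ swapped.

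The reduction to $\rn_+$ and the symmetry computation are routine once \eqref{DcyLe} is in hand; the step I expect to require the most care is the final bookkeeping — pinning down the overall sign and the constant multiplying $\De\psi(0)$ in the integration by parts, and making this consistent with the definitions \eqref{MUj}--\eqref{NUj} of $\mu_j,\nu_j$ and with the sign convention $\De\psi(0)=(n-1)H(0)$ for the mean curvature.
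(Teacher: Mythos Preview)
Your proposal is correct and follows essentially the same route as the paper: change variables to pass from $\Om_j$ to $\rn_+$ using the exponential decay \eqref{DcyLe}, integrate by parts in the tangential variable $z_k$, and then invoke the radial symmetry of $(u,v)$ to reduce the double sum over $k,l$ to the trace $\xi\,\De\psi(0)$. You are in fact more explicit than the paper about why the off-diagonal terms vanish, and your caution about the overall sign is well placed---the paper's own integration-by-parts line drops the minus sign, so your careful bookkeeping is warranted.
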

 	\begin{proof}
 		Using the exponential decay estimate \ref{DcyLe} we get
 		\begin{align*}
 			&\int_{\Om_j}\f{\pa^2 u}{\pa z_l\pa z_k}(z)z_nv_j(z)dx\\
 			&=\int_{\Om_j}\f{\pa^2 u}{\pa z_l\pa z_k}\Big(\f{\Psi(\eps_j x+x_0)}{\eps_j}\Big)\f{\Psi_n(\eps_j x+x_0)}{\eps_j}v\Big(\f{\Psi(\eps_j x+x_0)}{\eps_j}\Big)\chi(\Psi(\eps_j x+x_0))dx\\
 			&=\eps_j^{-n}\int_{\Om}\f{\pa^2 u}{\pa z_l\pa z_k}\Big(\f{\Psi(y)}{\eps_j}\Big)\f{\Psi_n(y)}{\eps_j}v\Big(\f{\Psi(y)}{\eps_j}\Big)\chi(\Psi(y))dy\\
 			&=\eps_j^{-n}\int_{\rn_+}\f{\pa^2 u}{\pa z_l\pa z_k}\Big(\f{w}{\eps_j}\Big)\f{w_n}{\eps_j}v\Big(\f{w}{\eps_j}\Big)\chi(w)\abs{det(D\Phi(w))}dw\\
 			&=\int_{\rn_+}\f{\pa^2 u}{\pa z_l\pa z_k}(z)z_n v(z)\chi(\eps_j z)\abs{det(D\Phi(\eps_j z))}dz\\
 			&=\int_{\rn_+}\f{\pa^2 u}{\pa z_l\pa z_k}(z)z_n v(z)\chi(\eps_j z)(1-(n-1)H(0)\eps_j z_n+O(\abs{\eps_j z}^2))dz\\
 			&=\int_{\rn_+}\f{\pa^2 u}{\pa z_l\pa z_k}(z)z_n v(z)(1-(n-1)H(0)\eps_j z_n+O(\abs{\eps_j z}^2))dz\\
 			&\,\, -\int_{\rn_+\backslash B(0,\de/\eps_j)}\f{\pa^2 u}{\pa z_l\pa z_k}(z)z_n v(z)(1-(n-1)H(0)\eps_j z_n+O(\abs{\eps_j z}^2))dz\\
 			&\,\, +\int_{\rn_+\backslash B(0,\de/\eps_j)}\f{\pa^2 u}{\pa z_l\pa z_k}(z)z_n v(z)\chi(\eps_j z)(1-(n-1)H(0)\eps_j z_n+O(\abs{\eps_j z}^2))dz\\
 			&=\int_{\rn_+}\f{\pa^2 u}{\pa z_l\pa z_k}(z)z_n v(z)(1-\eps_j(n-1)H(0) z_n))dz+o(\eps_j)
 		\end{align*}
 		\\
 		
 		Now  for $1\le k, l\le n-1$ we have
 		
 		\begin{align*}
 			\int_{\rn_+}\f{\pa^2 u}{\pa z_l\pa z_k}(z)z_n v(z)dz
 			=\int_{\rn_+}\f{\pa u}{\pa z_l}(z) \f{\pa v}{\pa z_k}(z) z_n dz
 		\end{align*}
 		
 		hence using the symmetry in the integral we get 
 		
 		\begin{align*}
 			&\sum_{k,l=1}^{n-1}\f{\pa^2 \psi}{\pa x_k\pa x_l}(0)\int_{\Om_j} \f{\pa^2 u}{\pa z_l\pa z_k}z_nv_j(z)dx\\
 			&= \sum_{k=1}^{n-1}\f{\pa^2 \psi}{\pa x_k\pa x_l}(0)\int_{\rn_+} \f{\pa u}{\pa z_k}(z) \f{\pa v}{\pa z_k}(z)z_n dz\\
 			&= \xi \De \psi(0)\\
 		\end{align*}
 		Similar calculation shows the other equality.
 	\end{proof}
 	
 	\begin{lemma}\label{APLemma5}
 		\be\int_{ \R^{n}_+ }\Big[\f{\pa u}{\pa z_n} v+u\f{\pa v}{\pa z_n}\Big]dz =\int_{\pa\rn_+}uvd\sigma\nee
 	\end{lemma}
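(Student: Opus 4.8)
The plan is to observe that the integrand is an exact $z_n$-derivative and then integrate it out slice by slice. By the product rule one has $\frac{\pa u}{\pa z_n}v + u\frac{\pa v}{\pa z_n} = \frac{\pa}{\pa z_n}(uv)$, so the left-hand side equals $\int_{\rn_+}\frac{\pa}{\pa z_n}(uv)\,dz$. Writing $z=(z',z_n)$ with $z'\in\R^{n-1}$ and using Fubini's theorem — legitimate because $uv$ and its first partial derivatives are integrable on $\rn_+$, thanks to the exponential decay of $u$ and $v$ recorded in \ef{DcyLe} (transported to $u,v$ via the change of variables \ef{LimChVr1}--\ef{LimChVr}) — this becomes $\int_{\R^{n-1}}\Big(\int_{\{z_n>0\}}\frac{\pa}{\pa z_n}(uv)(z',z_n)\,dz_n\Big)dz'$.

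For each fixed $z'$ the inner one-dimensional integral is evaluated by the fundamental theorem of calculus: it equals the boundary value $(uv)(z',0)$ minus the limit of $(uv)(z',z_n)$ as $z_n$ leaves the domain, and this limit vanishes since $|u(z)|,|v(z)|\le Ce^{-\de|z|}$. Integrating the remaining boundary values over $z'\in\R^{n-1}=\pa\rn_+$, with the orientation convention for $d\sigma$ fixed earlier (the same one used in Propositions \ref{estimate} and \ref{bconst}), yields exactly $\int_{\pa\rn_+}uv\,d\sigma$, which is the claimed identity. Equivalently, one may invoke the divergence theorem directly for the vector field $Z=(0,\dots,0,uv)$ on $\rn_+$, which produces the boundary term in one line.

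There is essentially no obstacle here: the only points that need care are the justification of the interchange of integrals and the vanishing of the contribution ``at infinity'', both of which follow from the exponential decay estimates already established for the limit solutions (cf.\ also \ef{tuvdecay} for the rescaled solutions $\tilde u_j,\tilde v_j$). One must simply make sure to use the same sign and orientation convention for the boundary measure $d\sigma$ on $\pa\rn_+$ as elsewhere in the paper, so that the identity is consistent with its downstream applications in Lemma \ref{Lem.UpEs3} and in the proof of Proposition \ref{bconst}.
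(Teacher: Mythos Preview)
Your argument is exactly the paper's: the paper's entire proof reads ``Proof is obvious from integration by parts formula,'' and you have simply spelled out that integration by parts (equivalently, the fundamental theorem of calculus in the $z_n$-variable together with the exponential decay at infinity). There is nothing to add; just be careful that the sign you obtain from $\int_0^\infty \partial_{z_n}(uv)\,dz_n = -(uv)\big|_{z_n=0}$ is matched to the orientation convention for $d\sigma$ used throughout.
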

 	\begin{proof}
 		Proof is obvious from integration by parts formula.
 	\end{proof}

 	\section{Appendix B}

 	\subsection{Calculation of $\bar{\mu}_j$ and $\bar{\nu}_j$ :}
 	let $w=\eps_{j}z$ and $y=\Phi(w)$ and $x=\f{y-x_j}{\eps_j}$,
 	\begin{align*}
 		\f{\pa \bar{u}_j(z)}{\pa z_i}&=\Big[\f{\pa \tilde{u}}{\pa x_l}(x)\f{\pa \Phi_l}{\pa y_i}(w) \Big]\chi(\eps_j x)+\eps_j \tilde{u}(x)\Big[\f{\pa \chi}{\pa w_l}(\eps_j x)\f{\pa \Phi_l}{\pa y_i}(w)\Big]
 	\end{align*}
 	Then 
 	\begin{align*}
 		\f{\pa^2 \bar{u}_j(z)}{\pa x_i^2}=&\Big[\f{\pa^2 \tilde{u}}{\pa x_l\pa x_k}(x)\f{\pa \Phi_k}{\pa w_i}(w)\f{\pa \Phi_l}{\pa w_i}(w) +\eps_j\f{\pa \tilde{u}}{\pa x_l}(x)\f{\pa^2 \Phi_l}{\pa w_i^2}(w)
 		\Big]\chi(\eps_j x)\\
 		&+2\eps_j\Big[\f{\pa \tilde{u}}{\pa x_l}(x)\f{\pa \Phi_l}{\pa y_i}(y) \Big]\Big[\f{\pa \chi}{\pa w_k}(w)\f{\pa \Phi_k}{\pa y_i}(y)\Big]\\
 		&+\eps_j^2 \tilde{u}(x)\Big[\f{\pa^2 \chi}{\pa x_l\pa x_k}(\eps_j x)\f{\pa \Phi_k}{\pa w_i}(w)\f{\pa \Phi_l}{\pa w_i}(w)+\f{\pa \chi}{\pa x_l}(\eps_j x)\f{\pa^2 \Phi_l}{\pa w_i^2}(w)\Big]
 	\end{align*}

 	Hence 
 	\begin{align}
 		\De \bar{u}_j=& \Big[ \f{\pa^2 \tilde{u}}{\pa x_l\pa x_k}\f{\pa \Phi_k}{\pa w_i}\f{\pa \Phi_l}{\pa w_i}\Big]\chi(\eps_j x)+\eps^{2}_{j}\tilde{u}\Big[ \f{\pa^{2}\chi}{\pa x_k \pa x_l}\f{\pa \Phi_k}{\pa w_i}\f{\pa \Phi_l}{\pa w_i} + \f{\pa\chi}{\pa x_k}\f{\pa^{2}\Phi_k}{\pa w^{2}_i}\Big]\notag\\
 		&+\eps_j\Big[\f{\pa \tilde{u}}{\pa x_k}\f{\pa^{2}\Phi_k}{\pa^{2}w_i}\chi(\eps_j x)+
 		2\f{\pa \tilde{u}}{\pa x_k}\f{\pa\Phi_k}{\pa w_i}\f{\pa\chi}{\pa x_l}\f{\pa \Phi_l}{\pa w_i}\Big]\notag\\
 		=&(A_1+\eps_j A_2)\chi(w)+\eps_j A_3+\eps_j^2 A_4 \ (say) \label{decompujB}
 	\end{align}
 	
 	\begin{lemma}\label{APLemma1B}
 		$A_1=\De \tilde{u}_j(x)-2\eps_j\sum_{k,l=1}^{n-1}\f{\pa^2 \tilde{u}_j}{\pa x_l\pa x_k} \f{\pa^2 \psi}{\pa z_k\pa z_l}(0)z_n+O(\abs{\eps_j z}^2)$
 	\end{lemma}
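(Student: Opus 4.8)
\textbf{Proof proposal for Lemma~\ref{APLemma1B}.}
The plan is to repeat the computation carried out in the proof of Lemma~\ref{APLemma1}, now exploiting that in the lower--energy construction the change of variables $\Phi=\Phi^j$ is given by the \emph{explicit} formula \eqref{BdMap} (with $\psi=\psi^j$), so that its Jacobian can be written down exactly instead of being estimated through \cite{MR1219814,MR1115095}. Abbreviating $w=\eps_j z$, differentiation of \eqref{BdMap} gives, for $1\le i,k\le n-1$,
\[
\f{\pa\Phi_k}{\pa w_i}(w)=\de_{ki}-w_n\,\f{\pa^2\psi}{\pa x_i\pa x_k}(w'),\qquad
\f{\pa\Phi_k}{\pa w_n}(w)=-\f{\pa\psi}{\pa x_k}(w'),
\]
together with $\f{\pa\Phi_n}{\pa w_i}(w)=\f{\pa\psi}{\pa x_i}(w')$ for $i\le n-1$ and $\f{\pa\Phi_n}{\pa w_n}(w)=1$.

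Next I would form the inner products $\sum_{i=1}^{n}\f{\pa\Phi_k}{\pa w_i}(w)\f{\pa\Phi_l}{\pa w_i}(w)$, using $\grad\psi(0)=0$, so that $\f{\pa\psi}{\pa x_k}(w')=O(|w'|)$ and $\f{\pa^2\psi}{\pa x_i\pa x_k}(w')=\f{\pa^2\psi}{\pa x_i\pa x_k}(0)+O(|w'|)$. A direct expansion to first order in $w_n$ yields, for $1\le k,l\le n-1$,
\[
\sum_{i=1}^{n}\f{\pa\Phi_k}{\pa w_i}\f{\pa\Phi_l}{\pa w_i}
=\de_{kl}-2w_n\,\f{\pa^2\psi}{\pa x_k\pa x_l}(0)+O(|w|^2),
\]
while $\sum_{i=1}^{n}\f{\pa\Phi_n}{\pa w_i}\f{\pa\Phi_l}{\pa w_i}=O(|w|^2)$ for $l\le n-1$ and $\sum_{i=1}^{n}\big(\f{\pa\Phi_n}{\pa w_i}\big)^2=1+O(|w|^2)$. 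These are the exact analogues of the three identities used in the proof of Lemma~\ref{APLemma1}, with $\Psi$ replaced by $\Phi$ and with the \emph{opposite} sign in front of the $w_n$--term; this simply reflects $D\Phi^j(0)^{-1}=D\Psi^j(0)$, i.e. $\Phi^j$ and its inverse $\Psi^j$ carry opposite second--order jets at the origin.

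Finally I would contract with $\f{\pa^2\tilde u_j}{\pa x_l\pa x_k}(x)$ and sum over $1\le k,l\le n$: the diagonal $\de_{kl}$ part reconstitutes $\De\tilde u_j(x)$, the surviving first--order term is $-2w_n\sum_{k,l=1}^{n-1}\f{\pa^2\tilde u_j}{\pa x_l\pa x_k}\,\f{\pa^2\psi}{\pa x_k\pa x_l}(0)$, and the $k=n$ or $l=n$ contributions together with all the $O(|w|^2)$ remainders are multiplied by second derivatives of $\tilde u_j$, which decay exponentially uniformly in $j$ by \eqref{tuvdecay} and interior Schauder estimates applied to \eqref{RSE}; hence they are of order $\eps_j^2\,O(|z|^2e^{-\de|z|})=O(|\eps_j z|^2)$. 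Substituting $w=\eps_j z$ gives the asserted expansion. The computation is routine; the only genuinely delicate point is the bookkeeping of the Einstein summation together with the sign of the leading curvature correction, which flips relative to Lemma~\ref{APLemma1} precisely because here one differentiates $\Phi^j$ directly rather than its inverse $\Psi^j$, and it is the uniform decay \eqref{tuvdecay} that legitimizes discarding the quadratic tails.
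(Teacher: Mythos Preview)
Your proposal is correct and follows essentially the same route as the paper: you compute the three inner products $\sum_i \partial_{w_i}\Phi_k\,\partial_{w_i}\Phi_l$ for the cases $k,l\le n-1$, $k=n\neq l$, and $k=l=n$, obtain $\de_{kl}-2w_n\,\partial^2_{kl}\psi(0)+O(|w|^2)$, $O(|w|^2)$, and $1+O(|w|^2)$ respectively, and then contract with $\partial^2_{kl}\tilde u_j$ using the exponential decay to control the remainder. The only cosmetic difference is that you write down the Jacobian entries of $\Phi$ directly from the explicit formula~\eqref{BdMap}, whereas the paper cites \cite{MR1219814,MR1115095} before carrying out the identical computation; your appeal to \eqref{tuvdecay} (rather than \eqref{DcyLe}) for the decay of $\tilde u_j$ is in fact the more accurate reference here.
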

 	
 	\begin{proof}
 		From \cite{MR1219814, MR1115095} we get:\\
 		For $1\le k,l\le n-1$
 		\begin{align*}
 			&\sum_{i=1}^{n}\f{\pa \Phi_k}{\pa y_i}(y)\f{\pa \Phi_l}{\pa y_i}(y)\\
 			&=\sum_{i=1}^{n-1}\Big(\de_{ki}- \f{\pa^2 \psi}{\pa x_i\pa x_k}(y')y_n\Big)\Big(\de_{li}- \f{\pa^2 \psi}{\pa x_l\pa x_i}(y')y_n\Big)+\f{\pa \psi}{\pa x_k}(y')\f{\pa \psi}{\pa x_l}(y')+O(\abs{y}^2)\\
 			&=\de_{kl}-2 \f{\pa^2 \psi}{\pa x_k\pa x_l}(y')y_n+O(\abs{y}^2)\\
 			&=\de_{kl}-2 \f{\pa^2 \psi}{\pa x_k\pa x_l}(0)y_n+O(\abs{y}^2)
 		\end{align*}
 		
 		Similarly for $1\le j\le n-1$ and $k=n$, using $\grad\psi(0)=0$, we get
 		\begin{align*}
 			&\sum_{i=1}^{n}\f{\pa \Phi_n}{\pa y_i}(y)\f{\pa \Phi_l}{\pa y_i}(y)=O(\abs{y}^2)
 		\end{align*}
 		
 		and for $k=l=n$ we have
 		
 		\begin{align*}
 			\sum_{i=1}^{n}\Big(\f{\pa \Phi_n}{\pa y_i}(y)\Big)^2=\sum_{i=1}^{n-1}\Big(\f{\pa \psi}{\pa x_i}(y)\Big)^2+1+O(\abs{y}^2)=1+O(\abs{y}^2)
 		\end{align*}

 		Hence from \ref{DcyLe} we get 
 		\begin{align*}
 			A_1=\De \tilde{u}_j(x)-2\eps_j\sum_{k,l=1}^{n-1}\f{\pa^2 \tilde{u}_j}{\pa x_l\pa x_k} \f{\pa^2 \psi}{\pa z_k\pa z_l}(0)z_n+O(\abs{\eps_j z}^2e^{-\theta\abs{z}})
 		\end{align*}
 		
 	\end{proof}

 	\begin{lemma}\label{APLemma2B}
 		$A_2=\f{\pa \tilde{u}_j}{\pa z_n}(z)\De\psi(0)+O(\abs{\eps_j z}e^{-\de\abs{z}})$
 	\end{lemma}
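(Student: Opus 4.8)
The plan is to read $A_2$ off the decomposition \eqref{decompujB} and then Taylor-expand the relevant piece of the change of variables about the origin. Matching the terms of order $\eps_j$ carrying the factor $\chi$ in \eqref{decompujB}, one has (with the usual summation on $i$ and $k$) $A_2 = \frac{\pa \tilde u_j}{\pa x_k}(x)\,\frac{\pa^2 \Phi_k}{\pa w_i^2}(w) = \sum_{k=1}^{n}\frac{\pa \tilde u_j}{\pa x_k}(x)\,(\De_w \Phi_k)(w)$, where $w=\eps_j z$ and $x=\eps_j^{-1}(\Phi(w)-x_j)$. So everything reduces to expanding $\De_w\Phi_k$ near $0$ and absorbing the remainders using the exponential decay of $\tilde u_j$.

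First I would differentiate the explicit formula for $\Phi$ (the analogue of \eqref{BdMap} built from $\psi^j$). Using $\grad\psi(0)=0$ one finds, for $1\le k\le n-1$, that $\De_w\Phi_k(w) = -w_n\,\De_{w'}\!\big(\tfrac{\pa\psi}{\pa x_k}\big)(w') = O(|w|)$, so in particular $\De\Phi_k(0)=0$; and for $k=n$, that $\De_w\Phi_n(w) = \De_{w'}\psi(w') = \De\psi(0) + O(|w'|)$. These are the same identities used just before Lemma~\ref{APLemma2}, only with the sign reversed because here we use $\Phi$ in place of $\Psi$, which is exactly why $A_2$ acquires a $+$ sign. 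Substituting $w=\eps_j z$ then gives $A_2 = \frac{\pa \tilde u_j}{\pa x_n}(x)\,\De\psi(0) + O(|\eps_j z|)\cdot|\grad\tilde u_j(x)|$.

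It then remains to (a) replace the derivative in $x$ by the derivative in $z$ and (b) check that every remainder is $O(|\eps_j z|e^{-\delta|z|})$. For (a) I would use $D\Phi(0)=Id$, so $x = z + O(\eps_j|z|^2)$, whence $\frac{\pa \tilde u_j}{\pa x_n}(x) = \frac{\pa \tilde u_j}{\pa z_n}(z) + O(|\eps_j z|)\sup_{|\al|=2}|D^\al\tilde u_j|$ by the mean value theorem. For (b) the key input is the uniform exponential decay \eqref{tuvdecay}, which, combined with interior Schauder estimates for the rescaled system \eqref{RSE} (whose coefficients are bounded via \eqref{c-asum}), upgrades to $|D^\al\tilde u_j(x)|\le Ce^{-\delta|x|}$ for $|\al|\le 2$ with $C,\delta>0$ independent of $j$. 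Feeding this in makes all the error terms above $O(|\eps_j z|e^{-\delta|z|})$, which is the claim. The differentiation of $\Phi$ is entirely routine; the only step that needs genuine care --- and the main obstacle --- is securing the exponential decay of the first and second derivatives of $\tilde u_j$ uniformly in $j$, so that the remainders are of the claimed pointwise form rather than merely small on compacta, which is what forces one to slightly shrink the decay rate relative to \eqref{tuvdecay}.
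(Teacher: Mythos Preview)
Your approach is correct and is essentially the same as the paper's: compute $\De\Phi_k(0)=0$ for $k<n$ and $\De\Phi_n(0)=\De\psi(0)$ from the explicit formula for $\Phi$, then Taylor expand and absorb remainders via the exponential decay. The paper's proof is only the one-line observation of these Laplacian values followed by ``a direct calculation''; you supply the details it omits, including the (valid) upgrade of \eqref{tuvdecay} to derivatives via Schauder estimates. Your step (a) is in fact unnecessary: the $\partial/\partial z_n$ in the statement is a notational slip for $\partial/\partial x_n$, as confirmed by the expression for $\bar\mu_j$ in Lemma~\ref{APLemma3B}.
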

 	
 	\begin{proof}
 		
 		\begin{align*}
 			\De \Phi_k(0)=0 \text{ and } \De \Phi_n(0)=\De\psi(0).
 		\end{align*}A direct calculation shows that:
 		  $A_2=\f{\pa \tilde{u}_j}{\pa z_n}(z)\De\psi(0)+O(\abs{\eps_j z}e^{-\de\abs{z}})$
 		
 	\end{proof}
 	
 	\begin{lemma}\label{APLemma3B}
 		$u_j, \ v_j$ satisfies 
 		\begin{equation}
 		\left\{\begin{aligned}
 		&	-\De \bar{u}_j + c(\eps_j x+x_j) \tilde{u}_j =  b(\eps_j x+x_j) \tilde{v}_j^q+\bar{\mu}_j(z), \quad \text{in } \Om_j\\
 		& -\De  \bar{v}_j + c(\eps_j x+x_j) \tilde{v}_j  = a(\eps_j x+x_j)\tilde{u}_j^p +\bar{\nu}_j(z)\quad \text{in } \Om_j\\
 		&u>0, \ v>0 \text{ in } \Om_j, \text{ and }\,\,\f{\pa u}{\pa\nu} = 0 = \f{\pa v}{\pa\nu} \text{ on }\pOm_j
 		\end{aligned}
 		\right.
 		\end{equation}
 		where  (with Einstein summation, $1\leq k,l\leq (n-1)$)
 		\begin{align}
 			\bar{\mu}_j(z)&=2\eps_j \f{\pa^2  \tilde{u}}{\pa x_l\pa x_k}(x) \f{\pa^2 \psi}{\pa z_k\pa z_l}(0)z_n-\eps_j\f{\pa  \tilde{u}}{\pa x_n}(x)\De\psi(0)+\eps_j^2O(\abs{z}^2e^{-\theta\abs{z}})\label{MUjB}\\
 			\bar{\nu}_j(z)&=2\eps_j \f{\pa^2 \tilde{v}}{\pa x_l\pa x_k}(x) \f{\pa^2 \psi}{\pa x_k\pa x_l}(0)z_n-\eps_j\f{\pa \tilde{v}}{\pa x_n}(x)\De\psi(0)+\eps_j^2O(\abs{z}^2e^{-\theta\abs{z}})\label{NUjB}
 		\end{align}
 	\end{lemma}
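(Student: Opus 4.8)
The plan is to follow the proof of Lemma \ref{APLemma3} almost verbatim, with the direct chart $\Phi^j$ (and the estimates of Lemmas \ref{APLemma1B} and \ref{APLemma2B}) replacing $\Psi$ (and Lemmas \ref{APLemma1}, \ref{APLemma2}). One starts from the decomposition \ef{decompujB} of $\De\bar u_j$ into $(A_1+\eps_j A_2)\chi(\eps_j x)+\eps_j A_3+\eps_j^2 A_4$, together with its analogue for $\bar v_j$. The first task is to discard the remainder pieces. Since $\chi\equiv1$ on $B(0,\de)$ and $\chi\equiv0$ outside $B(0,2\de)$, the terms $A_3$ (which carry $\f{\pa\chi}{\pa x_l}$) and $A_4$ (which carry $\f{\pa^2\chi}{\pa x_k\pa x_l}$ and $\f{\pa\chi}{\pa x_k}$) are supported in the annulus $\{\,\de\le|\eps_j z|\le2\de\,\}$, that is where $|z|\ge\de/\eps_j$; on that set the exponential decay \ef{tuvdecay} of $\tilde u_j,\tilde v_j$ forces $\eps_j A_3+\eps_j^2 A_4=O(e^{-\theta\de/\eps_j})$, which is smaller than any power of $\eps_j$ and is absorbed into the $\eps_j^2 O(|z|^2 e^{-\theta|z|})$ error in \ef{MUjB}--\ef{NUjB}. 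The same observation handles the discrepancy between $\bar u_j$ and $\tilde u_j$ in the zeroth-order term: $\chi-1$ vanishes on $B(0,\de/\eps_j)$ and is bounded elsewhere, so replacing one by the other costs only an exponentially small quantity.

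With the remainders out of the way, I would plug in the refined expansions. Lemma \ref{APLemma1B} gives $A_1=\De\tilde u_j(x)-2\eps_j\sum_{k,l=1}^{n-1}\f{\pa^2\tilde u_j}{\pa x_l\pa x_k}\f{\pa^2\psi}{\pa z_k\pa z_l}(0)z_n+O(|\eps_j z|^2 e^{-\theta|z|})$ and Lemma \ref{APLemma2B} gives $A_2=\f{\pa\tilde u_j}{\pa z_n}(z)\De\psi(0)+O(|\eps_j z|e^{-\de|z|})$. Substituting into \ef{decompujB}, using $\chi=1$ and $\bar u_j=\tilde u_j$ on $B(0,\de/\eps_j)$, and moving the term $\De\tilde u_j$ across, one obtains
\be
-\De\bar u_j=-\De\tilde u_j+2\eps_j\sum_{k,l=1}^{n-1}\f{\pa^2\tilde u_j}{\pa x_l\pa x_k}\f{\pa^2\psi}{\pa z_k\pa z_l}(0)z_n-\eps_j\f{\pa\tilde u_j}{\pa z_n}(z)\De\psi(0)+\eps_j^2O(|z|^2e^{-\theta|z|}).\nee
Now $(\tilde u_j,\tilde v_j)$ solves the rescaled system \ef{RSE}, so $-\De\tilde u_j=-c(\eps_j x+x_j)\tilde u_j+b(\eps_j x+x_j)\tilde v_j^q$; substituting this and reading off \ef{MUjB} gives the first equation of the claimed system, and the analogous computation with $\tilde u_j^p$ and $a(\eps_j x+x_j)$ in place of $\tilde v_j^q$ and $b(\eps_j x+x_j)$ gives the second, with $\bar\nu_j$ as in \ef{NUjB}. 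The Neumann condition is inherited from that of $(\tilde u_j,\tilde v_j)$: since $D\Phi^j(0)=Id$ the conormal derivative is preserved up to terms already accounted for, and $\f{\pa\bar u_j}{\pa\nu}=\f{\pa\bar v_j}{\pa\nu}=0$ follows directly from the boundary behaviour of $\chi$ and of $(\tilde u_j,\tilde v_j)$.

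The point that requires genuine care — and the only real obstacle — is the sign bookkeeping: here one differentiates the direct chart $\Phi^j$, not its inverse as in Appendix A, which is precisely why the second-order term in Lemma \ref{APLemma1B} appears with $-2\eps_j$ and the gradient term in Lemma \ref{APLemma2B} with $+$, opposite to Lemmas \ref{APLemma1}--\ref{APLemma2}; propagating these signs through \ef{decompujB} produces the pattern $+2\eps_j\,\partial^2\tilde u\,\partial^2\psi\,z_n-\eps_j\,\partial_n\tilde u\,\De\psi$ in $\bar\mu_j$, the mirror image of \ef{MUj}. Likewise one must use $\De\Phi_k^j(0)=0$ and $\De\Phi_n^j(0)=\De\psi(0)$ — with the opposite sign to $\De\Psi_n(0)=-\De\psi(0)$ — when assembling the $\eps_j A_2\chi$ contribution. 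Once the decay estimate \ef{tuvdecay} is in hand, everything else is the routine change-of-variables substitution described above.
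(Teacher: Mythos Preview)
Your proposal is correct and follows exactly the approach the paper intends: the paper's own proof is the one-line remark ``Using the fact $\tilde{u}$ and $\tilde{v}$ decay exponentially, $1-\chi$ vanishes in $B(0,\de)$ we get the result,'' and your argument is precisely the fleshed-out version of that sentence, invoking the decomposition \ef{decompujB}, Lemmas \ref{APLemma1B}--\ref{APLemma2B}, and the decay \ef{tuvdecay} to kill the $A_3,A_4$ and $(1-\chi)$ contributions. Your attention to the sign reversal coming from differentiating $\Phi^j$ rather than $\Psi$ is apt and matches the paper's computations in Appendix B.
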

 	
 	\begin{proof}
 		Using the fact $\tilde{u}$ and $\tilde{v}$ decays exponentially, $1-\chi$ vanishes in $B(0,\de)$ we get the result. 
 	\end{proof}
 	
 	Also using the $C^2_{loc}$ convergence and by dominated convergence theorem we get the following 
 	
 	\begin{lemma}\label{APLemma4B}
 		\begin{align*} 
 			\xi \De \psi(0)+O(\eps_j)&=\sum_{k,l=1}^{n-1}\f{\pa^2 \psi}{\pa x_k\pa x_l}(0)\int_{\rn_+} \f{\pa^2 \tilde{u}}{\pa x_l\pa x_k}(x)z_n\bar{v}_j(z)dz\\
 			&=\sum_{k,l=1}^{n-1}\f{\pa^2 \psi}{\pa x_k\pa x_l}(0)\int_{\rn_+} \f{\pa^2 \tilde{v}}{\pa x_l\pa x_k}(x)z_n\bar{u}_j(z)dz
 		\end{align*}
 		where $\xi=\int_{\rn_+} \f{\pa u}{\pa z_k}(z) \f{\pa v}{\pa z_k}(z)z_n dz$ for $1\le k\le n-1$
 	\end{lemma}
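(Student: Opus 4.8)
The plan is to follow the template of Lemma~\ref{APLemma4} almost verbatim; the one new feature is that the limiting profile is reached only as the $C^2_{loc}$-limit of the rescaled solutions $\tilde{u}_j,\tilde{v}_j$ of \eqref{RSE}, so a dominated-convergence argument must replace the explicit Taylor expansion used in Appendix~A. First I would unfold the definition of $\bar{v}_j$ and pass to the variable $x=\eps_j^{-1}(\Phi^j(\eps_j z)-x_j)$. On the support of the cut-off (where $|\eps_j z|\lesssim\delta$) one has $x=z+O(\eps_j|z|^2)$, $|D\Phi^j(\eps_j z)|=1-(n-1)H(x_j)\eps_j z_n+O(|\eps_j z|^2)$, and $\chi(\Phi^j(\eps_j z)-x_j)=1$ outside an exponentially negligible set, whence
\[
\sum_{k,l=1}^{n-1}\f{\pa^2\psi}{\pa x_k\pa x_l}(0)\int_{\rn_+}\f{\pa^2\tilde{u}_j}{\pa x_l\pa x_k}(x)\,z_n\,\bar{v}_j(z)\,dz=\sum_{k,l=1}^{n-1}\f{\pa^2\psi}{\pa x_k\pa x_l}(0)\int_{\rn_+}\f{\pa^2\tilde{u}_j}{\pa x_l\pa x_k}(x)\,\tilde{v}_j(x)\,z_n\,dz+O(\eps_j),
\]
every remaining geometric correction being $O(\eps_j)$ against the exponential decay.

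Next I would justify that $\int_{\rn_+}\f{\pa^2\tilde{u}_j}{\pa x_l\pa x_k}(x)\,\tilde{v}_j(x)\,z_n\,dz\to\int_{\rn_+}\f{\pa^2 u}{\pa z_l\pa z_k}(z)\,v(z)\,z_n\,dz$ as $j\to\infty$. Pointwise convergence of the integrand is immediate from $x\to z$ on compact sets together with the $C^2_{loc}$-convergence $\tilde{u}_j\to u$, $\tilde{v}_j\to v$ (solutions of \eqref{RSE} converging to the solution of \eqref{RLE}). For a $j$-independent $L^1$-majorant I would use the uniform estimate \eqref{tuvdecay}, $\tilde{u}_j,\tilde{v}_j\le c\,e^{-\theta|x|}$, and propagate it to $|D^2\tilde{u}_j|\le c'e^{-\theta'|x|}$ by interior Schauder estimates applied to \eqref{RSE}, whose coefficients are uniformly controlled by \eqref{c-asum}; since $|x|\ge\tfrac12|z|$ on the cut-off region, the integrand is then dominated by $C|z|e^{-\theta''|z|}\in L^1(\rn_+)$ and dominated convergence applies.

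The algebraic reduction is the same as in Lemma~\ref{APLemma4}. Integrating by parts in a tangential direction $z_k$ (or $z_l$) is legitimate since $k,l\le n-1$, so the weight $z_n$ is not differentiated and the boundary term over $\pa\rn_+$ vanishes; this trades the second derivative for $\f{\pa u}{\pa z_l}\f{\pa v}{\pa z_k}$ weighted by $z_n$. By the radial symmetry of $u$ and $v$ this integral vanishes when $k\ne l$ (oddness in $z_k$) and is independent of $k$ when $k=l$, with common value $\xi$; contracting against the symmetric matrix $\bigl(\f{\pa^2\psi}{\pa x_k\pa x_l}(0)\bigr)$ therefore collapses the double sum to $\xi\sum_{k=1}^{n-1}\f{\pa^2\psi}{\pa x_k\pa x_k}(0)=\xi\,\De\psi(0)$, which is the first identity. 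The second is obtained word for word with the roles of $\tilde{u}_j$ and $\tilde{v}_j$ exchanged, the two reduced integrals agreeing because $\int_{\rn_+}\f{\pa u}{\pa z_k}\f{\pa v}{\pa z_k}z_n\,dz$ is symmetric under $u\leftrightarrow v$.

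The only step that requires genuine care is the interchange of limit and integral: everything hinges on having a single dominating function valid for all $j$, which is exactly why the \emph{uniform} exponential decay \eqref{tuvdecay} (in the spirit of Proposition~5.1 of \cite{MR1978382}) and its transfer to the second derivatives through $j$-uniform elliptic estimates are indispensable; granting that, the rest is the change of variables and tangential integration by parts already carried out in Appendix~A.
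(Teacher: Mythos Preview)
Your proposal is correct and follows essentially the same approach as the paper. The paper's own proof is a single sentence---``using the $C^2_{loc}$ convergence and by dominated convergence theorem we get the following''---and your write-up simply supplies the details behind that sentence: the change of variables mirroring Lemma~\ref{APLemma4}, the uniform exponential bound \eqref{tuvdecay} (upgraded to second derivatives via Schauder estimates) as the dominating function, and the tangential integration by parts together with the radial symmetry of $(u,v)$ to collapse the sum to $\xi\,\De\psi(0)$.
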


 	\begin{lemma}\label{APLemma5B}
 		\be\int_{ \R^{n}_+ }\Big[\f{\pa \tilde{u}_j}{\pa x_n}(x) \bar{v}_j(z)+\bar{u}_j(z)\f{\pa \tilde{v}_j}{\pa x_n}\Big]dz =\int_{\pa\rn_+}uvd\sigma+O(\eps_j)\nee
 	\end{lemma}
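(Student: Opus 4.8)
The plan is to imitate the proof of Lemma~\ref{APLemma5}, the only new point being that the straightening map $\Phi^j$, the cut-off $\chi$, and the passage from $(\tilde u_j,\tilde v_j)$ to the limit $(u,v)$ each account only for $O(\eps_j)$ corrections. First I would simplify the integrand: since $\bar u_j(z)=\tilde u_j(x)\chi(\eps_j x)$ and $\bar v_j(z)=\tilde v_j(x)\chi(\eps_j x)$ with $x:=\eps_j^{-1}(\Phi^j(\eps_j z)-x_j)$, the bracket in the statement equals $\chi(\eps_j x)\,\pa_{x_n}(\tilde u_j\tilde v_j)(x)$. I would then change variables $z\mapsto x$; this carries $\R^n_+$ onto $\Om_j$ and $\partial\R^n_+$ onto $\partial\Om_j$ (locally, where $\chi\neq 0$), with Jacobian $|\det D\Phi^j(\eps_j z)|^{-1}=1+(n-1)H(x_j)\eps_j x_n+O(\eps_j^2|x|^2)$, using $|D\Phi^j(w)|=1-(n-1)H(x_j)w_n+O(|w|^2)$ and $\Phi^j(w)=w+O(|w|^2)$.

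Next I would peel off the lower-order terms. By the uniform exponential decay \eqref{tuvdecay} and interior and boundary elliptic estimates for \eqref{RSE} — whose right-hand sides inherit that decay — the gradients $\nabla\tilde u_j,\nabla\tilde v_j$, hence $\pa_{x_n}(\tilde u_j\tilde v_j)$, decay exponentially uniformly in $j$. Therefore the Jacobian correction integrates to $O(\eps_j)$ and the cut-off collar $\{\delta/\eps_j<|x|<2\delta/\eps_j\}$ contributes an exponentially small error, leaving $\int_{\Om_j}\pa_{x_n}(\tilde u_j\tilde v_j)(x)\,dx+O(\eps_j)$. The divergence theorem on $\Om_j$ (no contribution at infinity, again by the decay) turns this into the boundary integral of $\tilde u_j\tilde v_j$ over $\partial\Om_j$.

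Then I would flatten the boundary and pass to the limit. Near $x_j$ the hypersurface $\partial\Om_j$ is the graph of $h_j(x'):=\eps_j^{-1}\psi^j(\eps_j x')$, which, since $\psi^j(0)=0$, $\nabla\psi^j(0)=0$ and $D^2\psi^j$ is bounded, has $C^1$-norm of order $\eps_j$ on bounded sets; localizing with the decay, the boundary integral over $\partial\Om_j$ reduces — up to an $O(\eps_j)$ error — to the integral of $\tilde u_j\tilde v_j$ over $\partial\R^n_+$, exactly as in Lemma~\ref{APLemma5}. Finally the $C^1_{loc}$ convergence $\tilde u_j\to u$, $\tilde v_j\to v$ together with the uniform exponential bound (dominated convergence) replace $\tilde u_j\tilde v_j$ by $uv$ there, giving $\int_{\partial\R^n_+}uv\,d\sigma+O(\eps_j)$; as $\eps_j\to 0$ this collapses to Lemma~\ref{APLemma5}.

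The hard part is not conceptual but bookkeeping: one must keep the two nested changes of variables (the $\Phi^j$-straightening and the $\eps_j$-rescaling) consistent while verifying that each of the four error sources — the non-flat Jacobian of $\Phi^j$, the cut-off collar, the curved $j$-dependent boundary $\partial\Om_j$, and the difference $\tilde u_j\tilde v_j-uv$ — is $O(\eps_j)$ uniformly in $j$. Here \eqref{tuvdecay}, together with the uniform exponential decay of the gradients it forces through elliptic regularity, is precisely what legitimizes the tail estimates and the boundary localization.
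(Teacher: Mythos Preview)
Your argument is correct and is a fully fleshed-out version of what the paper records in one line (``obvious from integration by parts''): the only content beyond Lemma~\ref{APLemma5} is that the straightening $\Phi^j$, the cut-off, and the passage $(\tilde u_j,\tilde v_j)\to(u,v)$ each perturb the identity by $O(\eps_j)$, and you account for all four error sources carefully using the uniform decay \eqref{tuvdecay}.

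One streamlining worth noting: your route changes variables back to $x\in\Om_j$, integrates by parts there, and then has to re-flatten the curved boundary $\partial\Om_j$. It is shorter to stay on $\R^n_+$ throughout. From the first displayed formula of Appendix~B with $i=n$, together with $\partial\Phi^j_n/\partial w_n=1$ and $\partial\Phi^j_l/\partial w_n=-\partial_l\psi^j(w')=O(\eps_j|z'|)$ for $l<n$, one has $\partial_{x_n}\tilde u_j(x)\,\chi(\eps_jx)=\partial_{z_n}\bar u_j(z)+O(\eps_j|z|)e^{-\theta|z|}$, and likewise for $v$. Hence the integrand equals $\partial_{z_n}(\bar u_j\bar v_j)$ up to an integrable $O(\eps_j)$ remainder, and a single integration by parts in $z_n$ on $\R^n_+$ gives $\int_{\partial\R^n_+}\bar u_j\bar v_j\,d\sigma+O(\eps_j)$; the boundary value then converges to $uv$ as you describe. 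This avoids the round trip through $\Om_j$. A minor caveat (present in the paper's statement as well): the final replacement of $\bar u_j\bar v_j$ by $uv$ via $C^1_{loc}$ convergence and dominated convergence yields $o(1)$ rather than a genuine $O(\eps_j)$, but since the lemma is only used after multiplication by $\eps_j$ (in the expansion of $\int\bar\mu_j\bar v_j+\bar\nu_j\bar u_j$), $o(1)$ suffices.
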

 	\begin{proof}
 		Proof is obvious from integration by parts formula.
 	\end{proof}

 	\section*{Acknowledgment}
 	
 	A.K. Sahoo is supported by fellowship under ministry of human resource development(India).

 \end{document}